\documentclass[review]{elsarticle}

\usepackage{lineno,hyperref}
\modulolinenumbers[5]
\journal{J. Comput. Phys.}

\usepackage{amsmath,amssymb,amsthm,amsfonts,afterpage,float,bm,stmaryrd,paralist,wrapfig}

\usepackage{cancel}
\usepackage{color}
\usepackage{fancyhdr}
\usepackage{graphics}
\usepackage{hyperref}
\usepackage{graphicx,epsf}
\usepackage{makeidx}
\usepackage{epsfig}
\usepackage{lscape}

\usepackage{empheq,xcolor}

\linespread{1.2}

\pssilent

\pagestyle{plain}
\oddsidemargin 0in
\evensidemargin 0in
\textwidth 6.45in
\topmargin -0.2in
\textheight 8.5in

\newtheorem{theorem}{Theorem}[section]
\newtheorem{lemma}{Lemma}[section]

\newtheorem{definition}{Definition}[section]
\newtheorem{remark}{Remark}[section]

\numberwithin{equation}{section}
\numberwithin{figure}{section}
\numberwithin{table}{section}

\def\XXint#1#2#3{{\setbox0=\hbox{$#1{#2#3}{\int}$}
\vcenter{\hbox{$#2#3$}}\kern-.51\wd0}}

\def\non{\nonumber }

\newcommand\eps{{\epsilon}}

\newcommand{\ud}{\mathrm{d}}

\newcommand{\I}{\mathrm{I}}
\newcommand{\TT}{\mathbb{T}^2}

\graphicspath{{Figure/}}









\bibliographystyle{elsarticle-num}

\begin{document}

\setlength{\pdfpageheight}{\paperheight}
\setlength{\pdfpagewidth}{\paperwidth}
\title{Maximum Principle Preserving Schemes for Binary Systems with Long-range Interactions}
\author{Xiang Xu}
\address{Department of Mathematics and Statistics, Old Dominion University, Norfolk, VA 23529}
\author{Yanxiang Zhao}
\address{Department of Mathematics, George Washington University, Washington D.C., 20052}

\begin{abstract}
We study some maximum principle preserving and energy stable schemes for the Allen-Cahn-Ohta-Kawasaki model with fixed volume constraint. With the inclusion of a nonlinear term $f(\phi)$ in the Ohta-Kawasaki free energy functional, we show that the Allen-Cahn-Ohta-Kawasaki dynamics is maximum principle preserving. We further design some first order energy stable numerical schemes which inherit the maximum principle preservation in both semi-discrete and fully-discrete levels. Furthermore, we apply the maximum principle preserving schemes to a general framework for binary systems with long-range interactions. We also present some numerical results to support our theoretical findings.
\end{abstract}

\begin{keyword}
Ohta-Kawasaki model, gradient flow, maximum principle preservation, energy stability.
\end{keyword}

\date{\today}
\maketitle


\section{Introduction}\label{sec:Introduction}

Ohta-Kawasaki (OK) model is introduced in \cite{OhtaKawasaki_Macromolecules1986} and has been extensively applied for the study of phase separation of diblock copolymers, which
have generated much interest in materials science in the past years due to their remarkable ability for self-assembly into nanoscale ordered structures \cite{Hamley_Wiley2004}. Diblock copolymers are chain molecules made by two different segment species, say $A$ and $B$ species. Due to the chemical incompatibility, the two species tend to be phase-separated; on the other hand, the two species are connected by covalent chemical bonds, which leads to the so-called microphase separation. The OK model can describe such microphase separation for diblock copolymers via a free energy functional:
\begin{align}\label{functional:OK}
E^{\text{OK}}[\phi] = \int_{\mathbb{T}^d} \dfrac{\epsilon}{2}|\nabla\phi|^2 + \dfrac{1}{\epsilon}W(\phi)\ \text{d}x + \dfrac{\gamma}{2}\int_{\mathbb{T}^d} |(-\Delta)^{-\frac{1}{2}}(f(\phi)-\omega)|^2\ \text{d}x,
\end{align}
with a volume constraint
\begin{align}\label{eqn:Volume}
\int_{\mathbb{T}^d} (f(\phi) - \omega)\ \text{d}x = 0.
\end{align}
Here $\mathbb{T}^d = \prod_{i=1}^d [-X_i, X_i] \subset \mathbb{R}^d, d = 2, 3$ denotes a periodic box and $0<\epsilon \ll 1$ is an interface parameter that indicates the system is in deep segregation regime. $\phi = \phi(x)$ is a phase field labeling function which represents the concentration of $A$ species. By the assumption of incompressibility for the binary system, the concentration of $B$ species can be implicitly represented by $1-\phi(x)$. Function $W(\phi) = 18(\phi^2-\phi)^2$ is a double well potential which enforces the phase field function $\phi$ to be equal to 1 inside the interface and 0 outside the interface. Near the interfacial region, the phase field function $\phi$ rapidly but smoothly transitions from 0 to 1. A new term of $f(\phi) = 3\phi^2 - 2\phi^3$ is introduced in the free energy functional to resemble $\phi$ as the indicator for the $A$ species. The first integral in (\ref{functional:OK}) is a local surface energy which represents the short-range interaction between the chain molecules and favors the large domain; while the second integral in (\ref{functional:OK}) is a term for the long-range (nonlocal) repulsive interaction with $\gamma >0$ being the strength of the repulsive force. Finally, $\omega\in(0,1)$ is the relative volume of the $A$ species.

To study the microphase separation and the pattern formation of the diblock copolymer, we consider the $L^2$ gradient flow dynamics of the OK model. On the other hand, to relax the volume constraint (\ref{eqn:Volume}), we can incorporate a penalty term into (\ref{functional:OK}) and change it into an unconstrained one:
\begin{align}\label{functional:pOK}
E^{\text{pOK}}[\phi] = \int_{\mathbb{T}^d} \dfrac{\epsilon}{2}|\nabla\phi|^2 + \dfrac{1}{\epsilon}W(\phi)\ \text{d}x + \dfrac{\gamma}{2}\int_{\mathbb{T}^d}  |(-\Delta)^{-\frac{1}{2}}(f(\phi)-\omega)|^2\ \text{d}x + \dfrac{M}{2}\left[ \int_{\mathbb{T}^d}  f(\phi)-\omega\ \text{d}x \right]^2,
\end{align}
where $M\gg1$ is a penalty constant. Then we can consider the corresponding penalized $L^2$ gradient flow dynamics with given initial $\phi(x,t=0) = \phi_0$, which thereafter is called penalized Allen-Cahn-Ohta-Kawasaki (pACOK) equation:
\begin{align}\label{eqn:pACOK}
\dfrac{\partial}{\partial t} \phi = \epsilon\Delta\phi - \dfrac{1}{\epsilon}W'(\phi) - \gamma(-\Delta)^{-1}(f(\phi)-\omega)f'(\phi) - M\int_{\mathbb{T}^d} (f(\phi)-\omega)\ \text{d}x\cdot f'(\phi).
\end{align}

Our main contribution in this paper is threefold. Firstly, the new form of $f(\phi)$ guarantees that the pACOK equation is maximum principle preserving (MPP), namely, if the initial data is bounded $0\le\phi_0\le1$, then $0\le\phi(x)\le1$ for any later time. Secondly, we adopt a linear splitting method to the pACOK equation and then apply a semi-implicit scheme for the numerical simulations. This scheme treats the linear terms implicitly but all the nonlinear and nonlocal terms explicitly, and with proper choice for the splitting constant, it inherits the MPP property at both time-discrete and fully-discrete levels. Besides, just as the energy dissipation law (energy stability) is obeyed by the continuous $L^2$ gradient flow (\ref{eqn:pACOK}), the proposed numerical scheme also successfully inherits the energy stability at both time-discrete and fully-discrete levels. Thirdly, the error estimate analysis is carried and the rate of convergence is verified by numerical simulations.

The inclusion of a new nonlinear term $f(\phi)$ in the OK model makes the key novelty in this paper. On one hand, this term $f(\phi)$ accounts for mimicking the behavior of $\phi$ as the indicator for the $A$-rich region, so it satisfies the condition:
\begin{align}\label{eqn:f_condition}
f(0) = 0, \quad f(1) = 1.
\end{align}
On the other hand, in order to pin the phase field label $\phi$ at 1 and 0 inside and outside the $A$-$B$ interface, respectively, we set an extra condition
\begin{align}\label{eqn:f'_condition}
f'(0) = 0, \quad f'(1) = 0.
\end{align}
so that the evolution of the pACOK dynamics (\ref{eqn:pACOK}) only updates the phase field $\phi$ near the interface but not in the away-from-interface region. This will help maintain $\phi$ as a desired tanh profile better than simply taking $f(\phi) = \phi$.  See \cite{Zhao_2018CMS, WangRenZhao_CMS2019, XuZhao_JSC2019} for the numerical comparisons between linear and nonlinear choices of $f(\phi)$. The polynomial of the smallest degree satisfying both conditions (\ref{eqn:f_condition}) and (\ref{eqn:f'_condition}) is
\begin{align}\label{eqn:f}
f(\phi) = 3\phi^2 - 2\phi^3.
\end{align}
In some scenario, a polynomial of higher degree might be required. For instance, in the study of energy stable numerical scheme based on operator splitting, one needs to perform a linear extension for the nonlinearity $f$ up to second order continuous derivative, then the minimal degree has to be fifth \cite{XuZhao_JSC2019}. The real magic that $f(\phi)$ in (\ref{eqn:f}) plays is that it preserves the maximum principle. The key observation is that $f'$ and $W'$ share a common factor $\phi - \phi^2$, such that any possible growth on $\phi$ (which potentially breaks the MPP) can be safely killed by the double well potential term to save the MPP. See the proof of Theorem \ref{theorem:MPP_continuous} for details.


The MPP is an important property held by the Allen-Cahn equation. It says that if the initial data is bounded between 0 and 1, then the solution remains between 0 and 1 for any later time. In recent years, efforts have been devoted to investigate the MPP numerical schemes for the Allen-Cahn equation. Tang and Yang studied a first order implicit-explicit scheme for the MPP property for the Allen-Cahn equation in \cite{TangYang_JCM2016}. They further extended the results to the generalized Allen-Cahn equation in \cite{ShenTangYang_CMS2016}. Later, some attempts have been made to study second order MPP schemes for fractional-in-space Allen-Cahn equation \cite{HouTangYang_JSC2017} and nonlocal Allen-Cahn equation \cite{DuJuLiQiao_SINA2019}. Recently some adaptive second order MPP schemes have been considered for the Allen-Cahn equation \cite{LiaoTangZhou_SINA2019} and time-fractional Allen-Cahn equation \cite{LiaoTangZhou_JCP2019}.

In this paper, as a first attempt, we will explore the MPP scheme for an Allen-Cahn type dynamics with a long-range interaction term. The new ingredient in the MPP scheme is inspired by the continuous MPP property, namely, the nonlinear function $f(\cdot)$ which satisfies the condition (\ref{eqn:f'_condition}). However two things are different between the continuous  and the discrete settings.  One is that $f(\cdot)$ has to be linearly extended to 0 and 1 in the continuous case but not in the discrete case. See the equation (\ref{eqn:f_extension}) and the related discussion. The other is that in the continuous case, $f(\cdot)$ has to be of the smallest degree to satisfy (\ref{eqn:f'_condition}) in order to be well controlled by the double well potential $W$, while in the discrete case, any polynomial $f(\cdot)$ satisfying (\ref{eqn:f'_condition}) (and (\ref{eqn:f_condition})) would do the trick. Allowing weaker conditions for the discrete schemes is due to the fact  that the MPP of $\phi(t)$ in the continuous pACOK dynamics depends on the entire history before $t$; while the discrete MPP of $\phi^{n}$ only depends on the $k$ previous states $\{\phi^{j}\}_{j=n-k}^{n-1}$ (in this paper, we focus on the the case of $k=1$). See the proofs of Theorem \ref{theorem:MPP_continuous}, Theorem \ref{theorem:MPP_semi} and Theorem \ref{theorem:MPP_fully} for details. This indeed provides much flexibility on choosing $f(\cdot)$ to exploit various discrete MPP schemes for Allen-Cahn type dynamics with long-range interactions.

For the discrete MPP schemes, the Lemma \ref{lemma:MPP_condition} (for the time-discrete case) and Lemma \ref{lemma:MPP_condition2} (for the fully-discrete case) play the key role which will be crucial for the analysis of not only the first order MPP scheme in this paper but also potentially for other higher order ones. Plus, these two lemmas suggest that the nonlocal terms might have to be treated explicitly in order to satisfy the discrete MPP.

Our work is by no mean an additional extension of the existing work on MPP by changing from one model to another. This work has potential wider impact on many other applications. Indeed, we further extend this model to binary systems with various long-range interactions. See Section \ref{section:GeneralFramework} for the detailed discussion on the extension. Our work could provide a general framework to explore the MPP numerical schemes for other applications such as the micromagnetic model for garnet films \cite{CondetteMelcherSuli_MathComp2010} , FitzHugh-Nagumo system\cite{RenTruskinovsky_Elasticity2000} , implicit solvation model\cite{Zhao_2018CMS} etc.

Since discrete energy stability is a byproduct when exploring the MPP schemes, we briefly review some of the existing work for the energy stable numerical methods. The energy stable schemes, first studied by Du and Nicolaides in \cite{DuNicolaides_SINA1991} for a second order accurate unconditionally stable time-stepping scheme for the Cahn-Hilliard equation, has been extensively studied for various $L^2$ and $H^{-1}$ gradient flow dynamics such as the standard Allen-Cahn and Cahn-Hilliard equations \cite{ShenYang_DCDSA2010}, phase field crystal model \cite{WiseWangLowengrub_SINA2009, HuWiseWangLowengrub_JCP2009}, modified phase field crystal model \cite{WangWise_SINA2011}, and epitaxial thin film growth model \cite{ChenCondeWangWangWise_JSC2012} etc. Several popular numerical schemes adopted by the community are listed below. One is the convex splitting method \cite{Eyre_Proc1998} in which the double well potential $W(\phi)$ is split into the sum of a convex function and a concave one, and the convex part is treated implicitly and the concave one is treated explicitly. However, a nonlinear system usually needs to be solved at each time step which induces high computational cost. Another widely adopted method is the stabilized semi-implicit method \cite{XuTang_SINA2006,ShenYang_DCDSA2010} in which $W(\phi)$ is treated explicitly. A linear stabilizing term is added to maintain the energy stability.  Another recent method is the IEQ method \cite{ChengYangShen_JCP2017, Yang_JCP2016} in which all nonlinear terms are treated semi-implicitly, the energy stability is preserved and the resulting numerical schemes lead to a symmetric positive definite linear system to be solved at each time step. A variation of the IEQ method, which is called SAV method, is well studied in the last couple of years \cite{ShenXuYang_SIAMReview2019}. For a more comprehensive review on the topics of the modeling and numerical methods of phase field approach, we refer the interested readers to \cite{DuFeng_Handbook2020}.

Some conventional notations adopted throughout the paper are collected here. We will denote by $\|\cdot\|_{L^p}$ and $\|\cdot\|_{H^s}$ the standard norms for the periodic Sobolev spaces $L^p_{\text{per}}(\mathbb{T}^d)$ and $H^s_{\text{per}}(\mathbb{T}^d)$. The standard $L^2$ inner product will be denoted by $\langle \cdot, \cdot \rangle$. In order to make the MPP satisfied by the pACOK equation (\ref{eqn:pACOK}), the nonlinear function $f$ needs to be extended to $\tilde{f}$ as follows:
\begin{align}\label{eqn:f_extension}
\tilde{f} =
\begin{cases}
0, \hspace{0.64in} s < 0; \\
3s^2 - 2s^3, \quad 0 \le s \le 1; \\
1, \hspace{0.64in} s > 1.
\end{cases}
\end{align}
We still use $f$ to denote such an extension for the brevity of notations. Indeed, the extension of $f$ is only used for the proof of the MPP property for the continuous pACOK equation (\ref{eqn:pACOK}). For the time-discrete and fully-discrete pACOK equations, the unextended $f(s) = 3s^2 - 2s^3$ suffices to guarantee the MPP and energy stability, see the proofs of Theorem \ref{theorem:MPP_continuous}, Theorem \ref{theorem:MPP_semi} and Theorem \ref{theorem:MPP_fully} for the details. We take
\[
L_{W''}: = \|W''\|_{L^{\infty}[0,1]}, \quad L_{f''}: = \|f''\|_{L^{\infty}[0,1]}, \quad L_{f'} = \|f'\|_{L^{\infty}[0,1]}.
\]
Next, $\|(-\Delta)^{-1}\|$ denotes the optimal constant such that $\|(-\Delta)^{-1}f\|_{L^{\infty}}\le C \|f\|_{L^{\infty}}$, namely, it is the norm of the operator $(-\Delta)^{-1}$ from $L^{\infty}(\mathbb{T}^d)$ to itself. We will take $[\![ n]\!]$ to be the set of integers $\{1,2,\cdots, n\}$. Lastly, we denote $\tilde{\omega} = \max\{\omega, 1-\omega\}$.

The rest of the paper is organized as follows. In Section 2, we will prove the MPP property for the continuous pACOK dynamics. In Section 3, a first order time-discrete numerical scheme will be studied which inherits the MPP and energy stability. In Section 4, we will conduct analysis of MPP and energy stability for the fully-discrete scheme. The error estimate will be carried as well. The extension of the MPP to  general binary systems with long-range interactions is discussed in Section 5.  We will present some numerical results to support our theoretical findings in Section 6, followed by a summary in Section 7. In the appendix, we present the wellposedness of the pACOK equation and the $L^{\infty}$ bound for the weak solution of the pACOK equation.

\section{MPP for the continuous pACOK dynamics}

In this section, we will prove that the continuous pACOK equation (\ref{eqn:pACOK}) satisfies the MPP, and one can see the critical role that $f(\phi)$ plays in the theory. Note that in this section, $f(\cdot)$ represents the extended version (\ref{eqn:f_extension}).

\begin{theorem}\label{theorem:MPP_continuous}
The pACOK equation (\ref{eqn:pACOK}) is MPP, namely, if $0\le \phi_0 \le 1$, then $0\le \phi(t) \le 1$ for any $t>0$, provided that $\phi_0\in H^1(\mathbb{T}^d)$ and
\begin{align}\label{eqn_MPPcondition_continuous}
\frac{\epsilon\tilde{\omega}}{6}\Big[ \gamma\|(-\Delta)^{-1}\| + M |\mathbb{T}^d|  \Big] \le 1.
\end{align}
\end{theorem}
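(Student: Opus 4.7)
The plan is to use a Stampacchia-type truncation. For the upper bound $\phi \le 1$, I would test the pACOK equation \reff{eqn:pACOK} against $v := (\phi-1)^+ \in H^1(\mathbb{T}^d)$, whose support is $\{\phi > 1\}$ and whose gradient is $\chi_{\{\phi>1\}}\nabla\phi$. Integrating by parts on the Laplacian and grouping the nonlocal and penalty forcing into
\[
G(x,t) := \gamma(-\Delta)^{-1}(f(\phi)-\omega) + M\int_{\mathbb{T}^d}(f(\phi)-\omega)\,dx,
\]
the resulting differential identity is
\[
\tfrac{1}{2}\tfrac{d}{dt}\|v\|_{L^2}^2 + \epsilon\|\nabla v\|_{L^2}^2 = -\int_{\mathbb{T}^d} v\left[\tfrac{1}{\epsilon}W'(\phi) + G\, f'(\phi)\right] dx,
\]
and the task reduces to showing the right-hand side is nonpositive.

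Two ingredients drive the estimate. First, the extension \reff{eqn:f_extension} confines $f(\phi) \in [0,1]$ globally, so $\|f(\phi)-\omega\|_{L^\infty} \le \tilde\omega$ and therefore $\|G\|_{L^\infty} \le \tilde\omega[\gamma\|(-\Delta)^{-1}\| + M|\mathbb{T}^d|] \le 6/\epsilon$ by the hypothesis \reff{eqn_MPPcondition_continuous}. Second, the ``shared common factor'' identity that the authors emphasize: since $W'(\phi) = 36(\phi^2-\phi)(2\phi-1)$ and $f'(\phi) = -6(\phi^2-\phi)$, one has $W'(\phi) = -6(2\phi-1)f'(\phi)$, which rewrites the bracketed integrand as $f'(\phi)\bigl[G - 6(2\phi-1)/\epsilon\bigr]$. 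On $\{\phi > 1\}$ both factors are $\le 0$: $f'(\phi)\le 0$ (either via the extension or via $f'(\phi)=6\phi(1-\phi)<0$) and $G - 6(2\phi-1)/\epsilon \le \|G\|_{L^\infty} - 6/\epsilon \le 0$ since $2\phi-1 > 1$. Their product is $\ge 0$, multiplied by $v\ge 0$ the integral is $\ge 0$, and its negation makes the right-hand side $\le 0$. Hence $\tfrac{d}{dt}\|v\|_{L^2}^2 \le 0$, and $v(\cdot,0)\equiv 0$ from $\phi_0\le 1$ propagates to $v\equiv 0$, i.e., $\phi \le 1$ almost everywhere for all $t > 0$.

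The lower bound $\phi\ge 0$ follows by the mirror argument with $u:=(-\phi)^+$: on $\{\phi < 0\}$ the factor $2\phi-1 < -1$ flips the relevant sign, so that $-6(2\phi-1)/\epsilon > 6/\epsilon \ge \|G\|_{L^\infty}$ makes the bracket $\ge 0$ while $f'(\phi) \le 0$, and the same dissipation structure closes. I expect the main subtlety to be reconciling the polynomial identity $W' = -6(2\phi-1)f'$ (an exact identity for the unextended $f$) with the truncated extension $\tilde f$ that actually appears in the PDE, together with justifying the Stampacchia test function $v\in H^1$ at the weak-solution level (handled by the wellposedness appendix). The hypothesis \reff{eqn_MPPcondition_continuous} is precisely the quantitative smallness needed for the double-well restoring force to dominate the nonlocal and penalty drift $Gf'(\phi)$ near the endpoints $\phi = 0, 1$, which is the real content of the theorem.
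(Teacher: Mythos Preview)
Your approach is correct and rests on the same substantive ideas as the paper's: the shared factor $\phi-\phi^2$ in $W'$ and $f'$, the bound $\|G\|_{L^\infty}\le 6/\epsilon$ coming from the extension \reff{eqn:f_extension} together with hypothesis \reff{eqn_MPPcondition_continuous}, and the sign analysis on the set where $|2\phi-1|\ge 1$. The organization differs: you run two separate Stampacchia truncations with $(\phi-1)^+$ and $(-\phi)^+$, whereas the paper multiplies \reff{eqn:pACOK} by $2\phi-1$, obtains an evolution equation for $\phi^2-\phi$, and then tests against $(\phi^2-\phi)^+$, thereby handling both endpoints in a single stroke. In the paper's notation your $G$ equals $\tfrac{6}{\epsilon}(A+B)$, and your condition $\|G\|_{L^\infty}\le 6/\epsilon$ is exactly their $\|A\|_{L^\infty}+|B|\le 1$; the quadratic $(2\phi-1)^2-(A+B)(2\phi-1)\ge 0$ that appears in their argument is the symmetric repackaging of your two endpoint sign checks.

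One point worth tightening: your factored form $\tfrac{1}{\epsilon}W'(\phi)+Gf'(\phi)=f'(\phi)\bigl[G-\tfrac{6}{\epsilon}(2\phi-1)\bigr]$ is an identity only for the \emph{unextended} cubic $f$, while the bound on $G$ requires the extended $\tilde f$. With the extension one actually has $\tilde f'(\phi)=0$ on $\{\phi>1\}\cup\{\phi<0\}$, so the $G\tilde f'(\phi)$ term vanishes there and the sign of the integrand is carried entirely by $W'(\phi)$, which has the correct sign on each set. This is consistent with your final inequality (and with the paper's computation, which exhibits the same extended/unextended tension you flagged), but the factoring step as written is not literally valid under the extension; you should state the argument directly via $\tilde f'\equiv 0$ outside $[0,1]$ rather than through the polynomial identity.
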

\begin{proof}
Multiplying on the two sides of (\ref{eqn:pACOK}) by $2\phi-1$, one has
\begin{align*}
\dfrac{\partial}{\partial t} (\phi^2-\phi) =&\ \epsilon \Delta(\phi^2-\phi) - 2\epsilon|\nabla\phi|^2 - \dfrac{36}{\epsilon}(\phi^2-\phi)(2\phi-1)^2 \\
&- \gamma(-\Delta)^{-1}(f(\phi)-\omega)\cdot 6(\phi-\phi^2)(2\phi-1) - M\int_{\mathbb{T}^d} (f(\phi)-\omega)\ \text{d}x\cdot 6(\phi-\phi^2)(2\phi-1)
\end{align*}
Multiplying on the two sides of the above equation by $(\phi^2-\phi)^+$ and taking integral over $\mathbb{T}^d$, one has
\begin{align*}
&\dfrac{1}{2}\dfrac{\partial}{\partial t} \int_{\mathbb{T}^d} |(\phi^2-\phi)^+|^2\ \text{d}x \\
=&\ -\epsilon \int_{\mathbb{T}^d} |\nabla(\phi^2-\phi)^+|^2\ \text{d}x - 2\epsilon \int_{\mathbb{T}^d} |\nabla\phi|^2 (\phi^2-\phi)^+\ \text{d}x  - \dfrac{36}{\epsilon}\int_{\mathbb{T}^d}|(\phi^2-\phi)^+|^2 (2\phi-1)^2\ \text{d}x \\
& + 6\gamma \int_{\mathbb{T}^d} (-\Delta)^{-1}(f(\phi)-\omega) |(\phi^2-\phi)^+|^2(2\phi-1)\ \text{d}x  + 6M\int_{\mathbb{T}^d} (f(\phi)-\omega)\ \text{d}x \int_{\mathbb{T}^d} |(\phi^2-\phi)^+|^2(2\phi-1)\ \text{d}x \\
=&\ -\epsilon \int_{\mathbb{T}^d} |\nabla(\phi^2-\phi)^+|^2\ \text{d}x - 2\epsilon \int_{\mathbb{T}^d} |\nabla\phi|^2 (\phi^2-\phi)^+\ \text{d}x  - \dfrac{36}{\epsilon}\int_{\mathbb{T}^d}|(\phi^2-\phi)^+|^2 \Big( (2\phi-1)^2 - (A+B)(2\phi-1) \Big)\ \text{d}x
\end{align*}
where
\[
A = \dfrac{\gamma\epsilon}{6}(-\Delta)^{-1}(f(\phi)-\omega), \quad B = \dfrac{M\epsilon}{6}\int_{\mathbb{T}^d}(f(\phi)-\omega)\ \text{d}x.
\]
When $\phi^2-\phi\ge0$, one has $|2\phi-1| \ge 1$. Note that the condition (\ref{eqn_MPPcondition_continuous}) implies $\|A\|_{L^{\infty}} + |B|\le 1$, therefore $(2\phi-1)^2 - (A+B)(2\phi-1)\ge0$, which implies that
\[
\dfrac{1}{2}\dfrac{\partial}{\partial t} \int_{\mathbb{T}^d} |(\phi^2-\phi)^+|^2\ \text{d}x \le 0.
\]
Taking integral for time from 0 to $t$ leads
\[
\int_{\mathbb{T}^d} |(\phi^2-\phi)^+|^2(t)\ \text{d}x \le \int_{\mathbb{T}^d} |(\phi^2-\phi)^+|^2(0)\ \text{d}x.
\]
If $0\le \phi(0) \le 1$, $\int_{\mathbb{T}^d} |(\phi^2-\phi)^+|^2(0)\ \text{d}x=0$, then
\[
\int_{\mathbb{T}^d} |(\phi^2-\phi)^+|^2(t)\ \text{d}x \le 0 \Rightarrow 0\le \phi(t) \le 1,
\]
which completes the proof.
\end{proof}

\begin{remark}
The wellposedness of the pACOK equation (\ref{eqn:pACOK}) can be well established by using the standard minimization movement scheme, see Theorem \ref{theorem:wellposedness} in the Appendix for the related discussion.
\end{remark}

\begin{remark}
For the condition (\ref{eqn_MPPcondition_continuous}), it is theoretically easy to achieve due to the smallness of the interfacial width $\epsilon$, though the long-range repulsion strength $\gamma$ and the penalty constant $M$ are supposed to be large.
\end{remark}

\begin{remark}
The extension of $f$ is critical in order to bound the $A$ term as
\[
\|A\|_{L^{\infty}} \le \frac{\gamma\epsilon}{6} \|(-\Delta)^{-1}\|\cdot \|f(\phi)-\omega\|_{L^{\infty}} \le \frac{\gamma\epsilon}{6} \|(-\Delta)^{-1}\|\cdot \tilde{\omega}.
\]
On the other hand, in the 2d case, one can still have the MPP held for non-extended $f(\phi)$ by showing that $\|f(\phi)\|_{L^{\infty}} \le C$ for some generic constant $C$ which depends on $\|\phi_0\|_{H^1(\mathbb{T}^2)}, \epsilon^{-1}, \omega, \gamma$ and $M$. See the Theorem \ref{theorem: Linfity} in the Appendix for the $L^{\infty}$ bound for the 2d weak solution $\phi$. Then $A$ is still bounded as
\[
\|A\|_{L^{\infty}} \le \frac{C\gamma\epsilon}{6} \|(-\Delta)^{-1}\|\tilde{\omega}.
\]
However, to bound the quantity $\|A\|_{L^{\infty}}+|B|$ by 1, one has to take sufficiently small value of $\gamma$, which is theoretically acceptable but unrealistic in applications.
\end{remark}

\section{Time-discrete Scheme: MPP and Energy Stability}

Now we will consider a semi-discrete scheme for the pACOK equation (\ref{eqn:pACOK}), and show that such a scheme satisfies the MPP and energy stability under some conditions. Given time interval $[0,T]$ and an integer $N>0$, we take the uniform time step size $\tau = T/N$ and $t_n = n\tau$ for $n=0,1,\cdots,N$. Let $\phi^n(x) \approx \phi(t_n,x)$ be the temporal semi-discrete approximation of the solution $\phi$ at $t_n$. Given initial data $\phi^0 = \phi_0$ and a splitting constant (or stabilizer) $\kappa > 0$, we consider the following stabilized time-discrete scheme:
\begin{align}\label{eqn:pACOK_SemiImplicit}
\left(\dfrac{1}{\tau}+\dfrac{\kappa}{\epsilon}\right)(\phi^{n+1}-\phi^n)  = &\ \epsilon\Delta\phi^{n+1} - \dfrac{1}{\epsilon}W'(\phi^n) \nonumber\\
& - \gamma(-\Delta)^{-1}(f(\phi^n)-\omega)f'(\phi^n) - M\int_{\mathbb{T}^d} (f(\phi^n)-\omega)\text{d}x\cdot f'(\phi^n),
\end{align}
which can be rewritten as
\begin{align}\label{eqn:pACOK_SemiImplicit2}
\left(\left(1+\dfrac{\tau\kappa}{\epsilon}\right)I - \tau\epsilon\Delta \right)\phi^{n+1} = &\left(1+\dfrac{\tau\kappa}{\epsilon}\right)\phi^n - \dfrac{\tau}{\epsilon}W'(\phi^n) \nonumber\\
&- \tau\gamma(-\Delta)^{-1}(f(\phi^n)-\omega)f'(\phi^n) - \tau M\int_{\mathbb{T}^d} (f(\phi^n)-\omega)\text{d}x\cdot f'(\phi^n).
\end{align}
A simple calculation reveals that the eigenvalues of the operator $(1+\tau\kappa\epsilon^{-1})I - \tau\epsilon\Delta$ on the left hand side of (\ref{eqn:pACOK_SemiImplicit2}) are all positive. Therefore the scheme is unconditionally uniquely solvable.

\subsection{MPP for time-discrete scheme}

In this section, we will show that the scheme (\ref{eqn:pACOK_SemiImplicit2}) is MPP. To this end, we begin with a lemma.

\begin{lemma}\label{lemma:MPP_condition}
Let
\[
\mathcal{F}(\psi) = \left(1+\dfrac{\tau\kappa}{\epsilon}\right)\psi - \dfrac{\tau}{\epsilon}W'(\psi) - \tau\gamma(-\Delta)^{-1}(f(\psi)-\omega)f'(\psi) - \tau M\int_{\mathbb{T}^d} (f(\psi)-\omega)\emph{d}x f'(\psi).
\]
If $\psi(x)\in [0,1]$, then we have
\[
\max_{\psi\in[0,1]} \mathcal{F}(\psi) = 1+\dfrac{\tau\kappa}{\epsilon} ; \quad \min_{\psi\in[0,1]} \mathcal{F}(\psi) = 0,
\]
provided that
\begin{align}\label{eqn:MPP_condition}
\frac{1}{\tau}+ \dfrac{\kappa}{\epsilon} \ge \dfrac{L_{W''}}{\epsilon}  + \tilde{\omega}L_{f''} \Big(\gamma\|(-\Delta)^{-1}\|+M|\mathbb{T}^d| \Big).
\end{align}
\end{lemma}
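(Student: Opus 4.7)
The plan is to reduce the claim to a one-variable monotonicity argument. Since we only care about the pointwise values of $\mathcal{F}(\psi)(x)$, and the two nonlocal quantities
\[
u(x) := (-\Delta)^{-1}(f(\psi)-\omega)(x), \qquad c := \int_{\mathbb{T}^d}(f(\psi)-\omega)\,\text{d}x,
\]
depend on the entire function $\psi$ but not on the pointwise variable separately, I would freeze them and view
\[
g(x, s) := \left(1+\tfrac{\tau\kappa}{\epsilon}\right)s - \tfrac{\tau}{\epsilon}W'(s) - \tau\bigl(\gamma u(x) + M c\bigr) f'(s)
\]
as a function of $s$ with $x$ playing the role of a passive parameter. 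Since $\mathcal{F}(\psi)(x) = g(x, \psi(x))$, it suffices to show that $g(x,\cdot)$ maps $[0,1]$ into $[0,\,1+\tau\kappa/\epsilon]$ for every fixed $x$.

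Two steps carry this out. First, evaluation at the endpoints: the crucial structural identities $W'(0)=W'(1)=0$ (since $W(s)=18(s^2-s)^2$) and $f'(0)=f'(1)=0$ (since $f(s)=3s^2-2s^3$) immediately give $g(x,0)=0$ and $g(x,1)=1+\tau\kappa/\epsilon$, matching the claimed extrema exactly --- regardless of the values of $u(x)$ and $c$. Second, monotonicity: differentiating in $s$ yields
\[
\partial_s g(x,s) = \left(1+\tfrac{\tau\kappa}{\epsilon}\right) - \tfrac{\tau}{\epsilon}W''(s) - \tau\bigl(\gamma u(x) + Mc\bigr) f''(s).
\]
Because $\psi \in [0,1]$ and $f$ is monotone on $[0,1]$ with $f(0)=0,\, f(1)=1$, we have $\|f(\psi)-\omega\|_{L^\infty}\le \tilde{\omega}$, and hence $\|u\|_{L^\infty}\le \tilde{\omega}\|(-\Delta)^{-1}\|$ and $|c|\le \tilde{\omega}|\mathbb{T}^d|$. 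Together with $|W''(s)|\le L_{W''}$ and $|f''(s)|\le L_{f''}$ on $[0,1]$, the hypothesis \reff{eqn:MPP_condition} then directly forces $\partial_s g(x,s) \ge 0$ on $[0,1]$. So $g(x,\cdot)$ is nondecreasing, and the extrema are exactly the endpoint values.

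The only potentially tricky point is conceptual rather than computational: one has to recognize that, even though $\mathcal{F}$ is a nonlinear nonlocal operator, the question of pointwise bounds on $\mathcal{F}(\psi)(x)$ decouples in the variable $s = \psi(x)$ once the nonlocal contributions are bounded in $L^\infty$ by $\tilde{\omega}$. This is also precisely where the structural choice $f'(0)=f'(1)=0$ earns its keep --- it kills the nonlocal contamination at the boundary of the admissible interval, so that the endpoint values of $\mathcal{F}$ are clean and sharp. Modulo this observation, the whole proof is a one-line derivative calculation governed by the stabilizer condition \reff{eqn:MPP_condition}.
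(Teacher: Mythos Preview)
Your proposal is correct and essentially mirrors the paper's argument: both freeze the nonlocal quantities $u(x)=(-\Delta)^{-1}(f(\psi)-\omega)(x)$ and $c=\int_{\mathbb{T}^d}(f(\psi)-\omega)\,\text{d}x$, bound them via $\tilde{\omega}$, and then reduce to a one-variable comparison using $W'(0)=W'(1)=f'(0)=f'(1)=0$ together with condition~\reff{eqn:MPP_condition}. The only cosmetic difference is that the paper Taylor-expands (mean value theorem) from the endpoints $0$ and $1$ to get the two inequalities separately, whereas you compute $\partial_s g\ge 0$ once and read off both bounds from monotonicity; these are equivalent formulations of the same estimate.
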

\begin{proof}
Note that $f(\cdot)$ satisfies $f'(0) = f'(1) = 0$, it follows that for $\psi\equiv 0$, $\mathcal{F}(\psi) = 0$; for $\psi\equiv 1$, $\mathcal{F}(\psi) = 1+\tau\kappa/\epsilon$. For any other $\psi$ such that $0\le \psi \le 1$ and any $x\in\mathbb{T}^d$, one has
\begin{align*}
\mathcal{F}(\psi(x)) & =  \mathcal{F}(0) + \left(1+\dfrac{\tau\kappa}{\epsilon}\right)\psi(x) - \frac{\tau}{\epsilon}\psi(x)W''(\xi_0)  \\
& \quad - \tau\gamma \Big( (-\Delta)^{-1}(f(\psi)-\omega) \Big)(x) \cdot \psi(x) f''(\eta_0) - \tau M \int_{\mathbb{T}^d} (f(\psi)-\omega ) \text{d}x \cdot \psi(x) f''(\eta_0) \\
& \ge \mathcal{F}(0) +  \left(1+\dfrac{\tau\kappa}{\epsilon}\right)\psi(x) - \dfrac{\tau}{\epsilon}\psi L_{W''} - \tau\gamma\psi\|(-\Delta)^{-1}\|\tilde{\omega}L_{f''} - \tau M \psi |\mathbb{T}^d| \tilde{\omega} L_{f''} \ge  \mathcal{F}(0),
\end{align*}
where $\xi_0, \eta_0 \in (0,\psi(x)) \subset (0,1)$ are constants obtained from Taylor expansion. On the other hand,
\begin{align*}
\mathcal{F}(1-\psi(x)) & =  \mathcal{F}(1) - \left(1+\dfrac{\tau\kappa}{\epsilon}\right)\psi(x) + \frac{\tau}{\epsilon}\psi(x)W''(\xi_1) \\
&\quad + \tau\gamma \Big( (-\Delta)^{-1}(f(1-\psi)-\omega)\Big)(x)\cdot\psi(x) f''(\eta_1) + \tau M \int_{\mathbb{T}^d}(f(1-\psi)-\omega)\text{d}x\cdot \psi(x) f''(\eta_1) \\
& \le \mathcal{F}(1)  -\left(1+\dfrac{\tau\kappa}{\epsilon}\right)\psi(x) + \dfrac{\tau}{\epsilon}\psi(x) L_{W''} + \tau\gamma\psi(x)\|(-\Delta)^{-1}\| \tilde{\omega}L_{f''} - \tau M \psi |\mathbb{T}^d| \tilde{\omega}L_{f''} \le \mathcal{F}(1),
\end{align*}
where $\xi_1, \eta_1 \in (1-\psi(x), 1) \subset (0,1)$ are constants by Taylor expansion. Consequently we have the desired bounds for $\mathcal{F}(\psi)$.
\end{proof}

Now we present the MPP property for the scheme (\ref{eqn:pACOK_SemiImplicit}) or (\ref{eqn:pACOK_SemiImplicit2}).

\begin{theorem}\label{theorem:MPP_semi}
The stabilized time-discrete semi-implicit scheme (\ref{eqn:pACOK_SemiImplicit}) or (\ref{eqn:pACOK_SemiImplicit2}) is MPP, namely
\[
0\le \phi^0 \le 1 \Rightarrow 0\le \phi^n \le 1, \quad \forall n \in [\![ N]\!].
\]
provided that the condition (\ref{eqn:MPP_condition}) holds.
\end{theorem}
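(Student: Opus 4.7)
The plan is to proceed by induction on $n$, using Lemma \ref{lemma:MPP_condition} to control the explicit right-hand side of (\ref{eqn:pACOK_SemiImplicit2}) and the pointwise elliptic maximum principle to invert the implicit left-hand side. The base case $n=0$ is the hypothesis $0\le \phi^0\le 1$. For the inductive step, assume $0\le \phi^n(x)\le 1$ for every $x\in\mathbb{T}^d$; since the stabilization condition (\ref{eqn:MPP_condition}) is assumed, Lemma \ref{lemma:MPP_condition} delivers the pointwise bounds
\[
0 \le \mathcal{F}(\phi^n)(x) \le 1+\frac{\tau\kappa}{\epsilon}, \qquad \forall\, x\in \mathbb{T}^d.
\]

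Setting $\alpha := 1 + \tau\kappa/\epsilon > 0$, I would then read (\ref{eqn:pACOK_SemiImplicit2}) as the periodic elliptic problem $(\alpha I - \tau\epsilon \Delta)\phi^{n+1} = \mathcal{F}(\phi^n)$ on $\mathbb{T}^d$. As already noted in the paper, this operator has strictly positive eigenvalues and the equation is uniquely solvable; by standard elliptic regularity $\phi^{n+1}$ is smooth on the compact torus, hence attains its maximum at some $x_\ast$ and its minimum at some $x_{\ast\ast}$. At $x_\ast$ one has $\Delta\phi^{n+1}(x_\ast)\le 0$, so
\[
\alpha \phi^{n+1}(x_\ast) \le \alpha \phi^{n+1}(x_\ast) - \tau\epsilon\Delta\phi^{n+1}(x_\ast) = \mathcal{F}(\phi^n)(x_\ast) \le \alpha,
\]
and therefore $\phi^{n+1}(x_\ast)\le 1$. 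An analogous computation at $x_{\ast\ast}$, using $\Delta\phi^{n+1}(x_{\ast\ast})\ge 0$ and the lower bound $\mathcal{F}(\phi^n)(x_{\ast\ast})\ge 0$, gives $\phi^{n+1}(x_{\ast\ast})\ge 0$. Combining these closes the induction.

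The conceptual heart of the argument is already isolated in Lemma \ref{lemma:MPP_condition}, whose proof exploits $f'(0)=f'(1)=0$ together with the stabilization to guarantee that the explicit right-hand side maps $[0,1]$ pointwise into $[0,\alpha]$. Given that lemma, the remaining observation is that the constant functions $\phi\equiv 0$ and $\phi\equiv 1$ are mapped by $\alpha I-\tau\epsilon\Delta$ to the extreme values $0$ and $\alpha$ that bracket $\mathcal{F}(\phi^n)$, so a comparison principle pins $\phi^{n+1}$ between them. I do not expect a substantive obstacle here; the only mild subtlety is that the comparison must be pointwise rather than in an integral sense, which is precisely why the pointwise maximum principle applied at extremal points on the compact torus replaces any $L^2$ energy estimate.
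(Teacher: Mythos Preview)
Your proposal is correct and follows essentially the same approach as the paper: induction on $n$, invoking Lemma \ref{lemma:MPP_condition} to bound the right-hand side $\mathcal{F}(\phi^n)$ in $[0,1+\tau\kappa/\epsilon]$, and then applying the pointwise maximum principle at the extremal points of $\phi^{n+1}$ to conclude. The paper's proof is slightly terser and does not spell out the elliptic regularity or comparison-principle interpretation, but the argument is the same.
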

\begin{proof}
We can prove the result by induction. Assume that $0\le \phi^n \le 1$, and $\phi^{n+1}$ is obtained by the scheme (\ref{eqn:pACOK_SemiImplicit2}). Assume $\phi^{n+1}$ reaches the maximal value at $x^*$, then $-\tau\epsilon\Delta\phi^{n+1}(x^*)\ge0$, and
\[
\left(1+\dfrac{\tau\kappa}{\epsilon}\right) \phi^{n+1}(x^*) \le \mathcal{F}(\phi^n(x^*)) \le 1+\dfrac{\tau\kappa}{\epsilon} \Rightarrow \phi^{n+1} \le 1.
\]
Similarly let $x_*$ be a minimal point for $\phi^{n+1}$, then  $-\tau\epsilon\Delta\phi^{n+1}\le0$, and
\[
\left(1+\dfrac{\tau\kappa}{\epsilon}\right) \phi^{n+1}(x_*) \ge \mathcal{F}(\phi^n(x_*)) \ge 0\Rightarrow \phi^{n+1} \ge 0.
\]
which completes the proof.
\end{proof}

\begin{remark}
Note that the condition (\ref{eqn:MPP_condition}) holds for sufficiently large stabilizer $\kappa$ no matter what value of $\tau>0$. Therefore the stabilized semi-implicit scheme (\ref{eqn:pACOK_SemiImplicit}) or (\ref{eqn:pACOK_SemiImplicit2}) is unconditionally MPP for sufficiently large $\kappa$.
\end{remark}

\subsection{Energy stability for time-discrete scheme}

While the stabilized semi-discrete scheme (\ref{eqn:pACOK_SemiImplicit2}) is MPP, it is also energy stable as shown in the following theorem.
\begin{theorem}\label{theorem:EnergyStability}
Assume the initial $\phi^0$ satisfies $0\le \phi^0\le 1$, then the stabilized semi-implicit scheme (\ref{eqn:pACOK_SemiImplicit}) or (\ref{eqn:pACOK_SemiImplicit2}) is unconditionally energy stable in the sense that
\begin{align}
E^{\emph{pOK}}[\phi^{n+1}] \le E^{\emph{pOK}}[\phi^{n}]
\end{align}
 provided that
\begin{align}\label{eqn:ES_condition}
\frac{\kappa}{\epsilon} \ge \frac{L_{W''}}{\epsilon} + (L_{f'}^2+\tilde{\omega}L_{f''})\Big(\gamma\|(-\Delta)^{-1}\| + M |\mathbb{T}^d|\Big).
\end{align}
\end{theorem}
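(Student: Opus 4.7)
The plan is to bound $E^{\text{pOK}}[\phi^{n+1}]-E^{\text{pOK}}[\phi^n]$ from above by zero. I would split the difference into four pieces, one for each term in $E^{\text{pOK}}$, write each piece as a linear-plus-quadratic expansion around $\phi^n$, and then absorb the linear pieces by testing the scheme (\ref{eqn:pACOK_SemiImplicit}) against $\phi^{n+1}-\phi^n$ in $L^2$. Note first that (\ref{eqn:ES_condition}) is strictly stronger than (\ref{eqn:MPP_condition}), so Theorem \ref{theorem:MPP_semi} is available and gives $\phi^n,\phi^{n+1}\in[0,1]$; in particular, $\|f(\phi^n)-\omega\|_{L^\infty}\le\tilde{\omega}$, $|f'|\le L_{f'}$, and $|f''|\le L_{f''}$ on the relevant range, which is what drives every quadratic remainder.

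The mechanical tools are the identity $\|u\|_{L^2}^2-\|v\|_{L^2}^2 = 2\langle v,u-v\rangle + \|u-v\|_{L^2}^2$ (used on $\nabla\phi$ and, via $(-\Delta)^{-1/2}$, on the nonlocal integrand) plus Taylor's theorem with quadratic remainder (for $W$ and for $f$ inside the nonlocal/penalty terms). The gradient identity combined with integration by parts produces the scheme's implicit Laplacian term $-\epsilon\langle\Delta\phi^{n+1},\phi^{n+1}-\phi^n\rangle$ together with a favorable $-\frac{\epsilon}{2}\|\nabla(\phi^{n+1}-\phi^n)\|_{L^2}^2$. The double-well term gives $\frac{1}{\epsilon}\langle W'(\phi^n),\phi^{n+1}-\phi^n\rangle$ plus a remainder controlled by $\frac{L_{W''}}{2\epsilon}\|\phi^{n+1}-\phi^n\|_{L^2}^2$. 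For the nonlocal term, setting $h=f(\phi^{n+1})-f(\phi^n)$, the identity produces $\gamma\langle(-\Delta)^{-1}(f(\phi^n)-\omega),h\rangle + \frac{\gamma}{2}\|(-\Delta)^{-1/2}h\|_{L^2}^2$; plugging $h=f'(\phi^n)(\phi^{n+1}-\phi^n)+\frac{1}{2}f''(\xi)(\phi^{n+1}-\phi^n)^2$ into the first summand reproduces the nonlocal force in the scheme and leaves a remainder bounded by $\frac{\gamma\tilde{\omega}L_{f''}\|(-\Delta)^{-1}\|}{2}\|\phi^{n+1}-\phi^n\|_{L^2}^2$, while the residual $\frac{\gamma}{2}\|(-\Delta)^{-1/2}h\|_{L^2}^2$ is estimated, using $|h|\le L_{f'}|\phi^{n+1}-\phi^n|$ pointwise and the operator bound on $(-\Delta)^{-1}$ together with MPP, by $\frac{\gamma L_{f'}^2\|(-\Delta)^{-1}\|}{2}\|\phi^{n+1}-\phi^n\|_{L^2}^2$. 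The penalty term is handled analogously with $M|\mathbb{T}^d|$ in place of $\gamma\|(-\Delta)^{-1}\|$, since Cauchy--Schwarz converts $[\int h\,dx]^2$ into $|\mathbb{T}^d|\|h\|_{L^2}^2$, yielding remainders $\frac{M\tilde{\omega}L_{f''}|\mathbb{T}^d|}{2}$ and $\frac{ML_{f'}^2|\mathbb{T}^d|}{2}$.

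Testing (\ref{eqn:pACOK_SemiImplicit}) against $\phi^{n+1}-\phi^n$ in $L^2$ produces $(\frac{1}{\tau}+\frac{\kappa}{\epsilon})\|\phi^{n+1}-\phi^n\|_{L^2}^2$ on the left and exactly the four linear pieces above on the right. Assembling everything gives
\begin{align*}
E^{\text{pOK}}[\phi^{n+1}]-E^{\text{pOK}}[\phi^n] & \le -\tfrac{\epsilon}{2}\|\nabla(\phi^{n+1}-\phi^n)\|_{L^2}^2 \\
& \quad -\Big[\tfrac{1}{\tau}+\tfrac{\kappa}{\epsilon}-\tfrac{L_{W''}}{\epsilon}-(L_{f'}^2+\tilde{\omega}L_{f''})\big(\gamma\|(-\Delta)^{-1}\|+M|\mathbb{T}^d|\big)\Big]\|\phi^{n+1}-\phi^n\|_{L^2}^2,
\end{align*}
and the hypothesis (\ref{eqn:ES_condition}) makes the bracketed coefficient nonnegative even after dropping the free positive $1/\tau$, which is why stability is unconditional in $\tau$. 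The main technical obstacle is pinning the nonlocal residual $\frac{\gamma}{2}\|(-\Delta)^{-1/2}h\|_{L^2}^2$ to $\|\phi^{n+1}-\phi^n\|_{L^2}^2$ with constant $L_{f'}^2\gamma\|(-\Delta)^{-1}\|$; this is precisely what forces $L_{f'}^2$ (rather than only $L_{f''}$) into (\ref{eqn:ES_condition}), and requires combining a Cauchy--Schwarz pairing against the operator norm $\|(-\Delta)^{-1}\|$ with the MPP pointwise bound $|\phi^{n+1}-\phi^n|\le 1$.
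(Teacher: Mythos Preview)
Your proposal is correct and follows essentially the same route as the paper: test the scheme \eqref{eqn:pACOK_SemiImplicit} against $\phi^{n+1}-\phi^n$, expand each energy contribution via the polarization identity and Taylor's theorem, invoke Theorem~\ref{theorem:MPP_semi} (since \eqref{eqn:ES_condition} implies \eqref{eqn:MPP_condition}) so that all Taylor remainders land in $[0,1]$, and then absorb the accumulated quadratic remainders with the stabilizer $\kappa/\epsilon$. The only cosmetic difference is that you organize the computation starting from $E^{\text{pOK}}[\phi^{n+1}]-E^{\text{pOK}}[\phi^n]$ whereas the paper starts from the tested scheme and reconstructs the energy difference; the resulting inequality and the constants are the same.
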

\begin{proof}
Taking the $L^2$ inner product with $\phi^{n+1}-\phi^{n}$ on the two sides of (\ref{eqn:pACOK_SemiImplicit}) , we have
\begin{align}\label{eqn:estimate1}
&\dfrac{1}{\tau} \|\phi^{n+1} - \phi^{n}\|_{L^2}^2 \nonumber\\
= &\  - \dfrac{\kappa}{\epsilon}\|\phi^{n+1} - \phi^{n}\|_{L^2}^2   \underbrace{-\epsilon \langle \nabla\phi^{n+1}, \nabla\phi^{n+1} - \nabla\phi^{n} \rangle }_{\text{I}} \underbrace{ - \epsilon^{-1} \langle W'(\phi^{n}), \phi^{n+1} - \phi^{n}\rangle }_{\text{II}} \nonumber\\
& \underbrace{ -\gamma \left\langle (-\Delta)^{-1}(f(\phi^n)-\omega)f'(\phi^n), \phi^{n+1}-\phi^{n} \right\rangle }_{\text{III}}  \underbrace{ -M \textstyle{\int_{\mathbb{T}^d}} (f(\phi^n)-\omega)\text{d}x \left\langle f'(\phi^n), \phi^{n+1}-\phi^{n} \right\rangle }_{\text{IV}}.
\end{align}
Using the identity $a\cdot (a-b) = \frac{1}{2}|a|^2 - \frac{1}{2}|b|^2 + \frac{1}{2}|a-b|^2$ and $b\cdot (a-b) = \frac{1}{2}|a|^2 - \frac{1}{2}|b|^2 - \frac{1}{2}|a-b|^2$, we have:
\begin{align*}
\text{I} =&\ - \frac{\epsilon}{2} \left( \|\nabla\phi^{n+1}\|_{L^2}^2 - \|\nabla\phi^{n}\|_{L^2}^2 + \|\nabla\phi^{n+1}-\nabla\phi^n\|_{L^2}^2 \right); \\
\text{II} =& - \epsilon^{-1}\left\langle 1, W'(\phi^n)(\phi^{n+1}-\phi^n) \right\rangle = -\epsilon^{-1}\left\langle 1, W(\phi^{n+1}) \right\rangle + \epsilon^{-1} \left\langle 1, W(\phi^{n}) \right\rangle + (2\epsilon)^{-1} W''(\xi^n) \|\phi^{n+1}-\phi^n\|_{L^2}^2 ; \\
 \text{III} =&\ -\gamma \Big\langle (-\Delta)^{-1}(f(\phi^{n})-\omega), f(\phi^{n+1}) - f(\phi^{n}) \Big\rangle + \dfrac{\gamma}{2} \Big\langle (-\Delta)^{-1}(f(\phi^{n})-\omega), f''(\eta^n)(\phi^{n+1} - \phi^{n})^2 \Big\rangle \\
 \quad=&\ -\dfrac{\gamma}{2} \left( \|(-\Delta)^{-\frac{1}{2}}(f(\phi^{n+1})-\omega)\|_{L^{2}}^2 - \|(-\Delta)^{-\frac{1}{2}}(f(\phi^{n})-\omega)\|_{L^{2}}^2 - \|(-\Delta)^{-\frac{1}{2}}(f(\phi^{n+1})-f(\phi^n))\|_{L^{2}}^2 \right) \\
& \ + \dfrac{\gamma}{2} \Big\langle (-\Delta)^{-1}(f(\phi^{n})-\omega), f''(\eta^n)(\phi^{n+1} - \phi^{n})^2\Big\rangle ;\\
 \text{IV} = & -\dfrac{M}{2}\left( \left| \textstyle{\int_{\mathbb{T}^d}} ( f(\phi^{n+1}) - \omega ) \text{d}x \right|^2 -
                                                 \left| \textstyle{\int_{\mathbb{T}^d}} ( f(\phi^{n})     - \omega ) \text{d}x \right|^2 -
                                                 \left| \textstyle{\int_{\mathbb{T}^d}} ( f(\phi^{n+1}) - f(\phi^n)  \text{d}x \right|^2
                                                  \right)& \\
& + \dfrac{M}{2} \left( \textstyle{\int_{\mathbb{T}^d}} ( f(\phi^{n})     - \omega ) \text{d}x\right) f''(\eta^n) \|\phi^{n+1}-\phi^n\|_{L^2}^2      .
\end{align*}
where $\xi^n$ and $\eta^n$ are between $\phi^n$ and $\phi^{n+1}$. Note that the condition (\ref{eqn:ES_condition}) implies  (\ref{eqn:MPP_condition}), Theorem \ref{theorem:MPP_semi} gives $\phi^n,\phi^{n+1} \in [0,1]$, consequently $\xi^n, \eta^n \in (0,1)$. Therefore, we do not need the extension of $f$ as in (\ref{eqn:f_extension}) to perform Taylor expansion above. Finally, inserting the equalities for I--IV back into (\ref{eqn:estimate1}) and noting that $|f'|<L_{f'}, |f''|\le L_{f''}$ and $\int_{\mathbb{T}^d}| f(\phi)-\omega | dx \le  \tilde{\omega} |\mathbb{T}^d|$, it follows that
\begin{align*}
&\dfrac{1}{\tau}\|\phi^{n+1}-\phi^n\|_{L^2}^2 + \frac{\epsilon}{2}\|\nabla\phi^{n+1}-\nabla\phi^n\|_{L^2}^2 + E^{\text{pOK}}[\phi^{n+1}] - E^{\text{pOK}}[\phi^n] \\
=& -\frac{\kappa}{\epsilon}\|\phi^{n+1}-\phi^n\|_{L^2}^2 + \dfrac{W''(\eta^n)}{2\epsilon}\|\phi^{n+1}-\phi^n\|_{L^2}^2  \\
& + \dfrac{\gamma}{2}\|(-\Delta)^{-\frac{1}{2}}(f(\phi^{n+1})-f(\phi^n))\|_{L^{2}}^2 + \dfrac{\gamma}{2} \left\langle (-\Delta)^{-1}(f(\phi^{n})-\omega), f''(\eta^n)(\phi^{n+1} - \phi^{n})^2\right\rangle \\
& + \dfrac{M}{2}\left| \textstyle{\int_{\mathbb{T}^d}} (f(\phi^{n+1}) - f(\phi^n))\text{d}x\right|^2 +  \dfrac{M}{2} \left( \textstyle{\int_{\mathbb{T}^d}} ( f(\phi^n) -\omega )\text{d}x\right) f''(\eta^n) \|\phi^{n+1}-\phi^n\|_{L^2}^2 \\
\le& -\frac{\kappa}{\epsilon}\|\phi^{n+1}-\phi^n\|_{L^2}^2
       + \frac{L_{W}}{2\epsilon}\|\phi^{n+1}-\phi^n\|_{L^2}^2  \\
& + \frac{\gamma}{2}L_{f'}^2 \|(-\Delta)^{-1}\| \|(\phi^{n+1}-\phi^n)\|_{L^2}^2
   + \frac{\gamma}{2} \tilde{\omega}L_{f''}\|(-\Delta)^{-1}\|  \|\phi^{n+1}-\phi^n\|_{L^2}^2\\
& + \frac{M}{2} L_{f'}^2  |\mathbb{T}^d|   \|\phi^{n+1}-\phi^n\|_{L^2}^2
+ \frac{M}{2}  \tilde{\omega} L_{f''} |\mathbb{T}^d|   \|\phi^{n+1}-\phi^n\|_{L^2}^2 \\
= & \left(-\frac{\kappa}{\epsilon} +\frac{L_W}{2\epsilon} + \frac{1}{2}(L_{f'}^2+ \tilde{\omega}L_{f''})\Big(\gamma\|(-\Delta)^{-1}\| + M |\mathbb{T}^d| \Big)  \right)  \|\phi^{n+1}-\phi^n\|_{L^2}^2 \le 0,
\end{align*}
where the last inequality is true given the condition (\ref{eqn:ES_condition}). Consequently it leads to the energy stability.
\end{proof}

\subsection{Error estimate for time-discrete scheme}\label{subsection:errorestimate_timediscrete}

Now we perform an error estimate for the time-discrete scheme (\ref{eqn:pACOK_SemiImplicit}). Assume that the condition (\ref{eqn:ES_condition}) holds (and consequently the MPP condition (\ref{eqn:MPP_condition}) holds), and the initial $\phi^0$ is bounded $0\le\phi^0\le1$ (and consequently $0\le\phi^n\le1$ for any $n\in [\![ N]\!]$).

Subtracting equation (\ref{eqn:pACOK_SemiImplicit}) from the original equation (\ref{eqn:pACOK}) at time $t_{n+1}$ and denoting the error by $\tilde{e}^n = \phi(t_n) - \phi^n$, one has
\begin{align}
&\dfrac{1}{\tau}(\tilde{e}^{n+1} - \tilde{e}^n) - \epsilon \Delta \tilde{e}^{n+1}  \nonumber\\
= &\  R^{n+1} - \dfrac{\kappa}{\epsilon}(\tilde{e}^{n+1}-\tilde{e}^n)
                    + \dfrac{\kappa}{\epsilon}\Big[\phi(t_{n+1})-\phi(t_n)\Big]
                    - \dfrac{1}{\epsilon}\Big[ W'(\phi(t_{n+1})) - W'(\phi^n) \Big] \nonumber \\
  &  - \gamma\Big[ (-\Delta)^{-1}(f(\phi(t_{n+1}))-\omega)f'(\phi(t_{n+1}))
                               - (-\Delta)^{-1}(f(\phi^n)-\omega)f'(\phi^n)\Big]      \nonumber \\
  &  - M\Big[ \left(\textstyle{\int_{\mathbb{T}^d}} (f(\phi(t_{n+1}))-\omega) \text{d}x\right) f'(\phi(t_{n+1}))
                   -\left(\textstyle{\int_{\mathbb{T}^d}} (f(\phi^n)-\omega) \text{d}x\right) f'(\phi^n)\Big] .
\end{align}
where $R^{n+1} = \dfrac{\phi(t_{n+1})-\phi(t_n)}{\tau} -\phi_t(t_{n+1})$ has the following estimate \cite{ShenYang_DCDSA2010}:
\[
\|R^{n+1}\|_{H^s}^2 \le \dfrac{\tau}{3}\int_{t_n}^{t_{n+1}} \|\phi_{tt}(t)\|_{H^s}^2 dt,\quad s = -1, 0.
\]
Taking the $L^2$ inner product with $\tilde{e}^{n+1}$, it follows that
\begin{align}\label{eqn:estimate2}
& \dfrac{1}{2\tau}(\|\tilde{e}^{n+1}\|_{L^2}^2 - \|\tilde{e}^{n}\|_{L^2}^2 + \|\tilde{e}^{n+1}-\tilde{e}^n\|_{L^2}^2) + \epsilon \|\nabla \tilde{e}^{n+1}\|_{L^2}^2 \nonumber\\
= & \ \underbrace{\langle R^{n+1}, \tilde{e}^{n+1} \rangle}_{\text{I}}
         - \dfrac{\kappa}{\epsilon} \left(\dfrac{1}{2}\|\tilde{e}^{n+1}\|_{L^2}^2 - \dfrac{1}{2}\|\tilde{e}^{n}\|_{L^2}^2 + \dfrac{1}{2}\|\tilde{e}^{n+1}-\tilde{e}^n\|_{L^2}^2 \right) \nonumber\\
   &    + \underbrace{\dfrac{\kappa}{\epsilon}\langle\phi(t_{n+1})-\phi(t_n), e^{n+1}\rangle}_{\text{II}}
          \underbrace{ - \dfrac{1}{\epsilon}\langle W'(\phi(t_{n+1})) - W'(\phi^n), e^{n+1}\rangle}_{\text{III}} \nonumber\\
     &  \underbrace{ -  \gamma \Big\langle (-\Delta)^{-1}(f(\phi(t_{n+1}))-\omega)f'(\phi(t_{n+1}))
                               - (-\Delta)^{-1}(f(\phi^n)-\omega)f'(\phi^n), e^{n+1}  \Big\rangle}_{\text{IV}} \nonumber\\
     & \underbrace{ - M \Big\langle  \left( \textstyle{\int_{\mathbb{T}^d}} (f(\phi(t_{n+1}))-\omega) \text{d}x\right) f'(\phi(t_{n+1}))
                   -\left( \textstyle{\int_{\mathbb{T}^d}} (f(\phi^n)-\omega) \text{d}x\right) f'(\phi^n), e^{n+1} \Big\rangle }_{\text{V}}.
\end{align}
For the term I, we have
\begin{align*}
\text{I}
& \le \|R^{n+1}\|_{H^{-1}}\|\tilde{e}^{n+1}\|_{H^1}
 \le \dfrac{1+\frac{|\mathbb{T}^d|}{4\pi^2}}{2\epsilon} \|R^{n+1}\|_{H^{-1}}^2
 + \dfrac{\epsilon}{2\left[1+\frac{|\mathbb{T}^d|}{4\pi^2}\right]}\|\tilde{e}^{n+1}\|_{H^1}^2 \\
&\le \dfrac{1+\frac{|\mathbb{T}^d|}{4\pi^2}}{2\epsilon} \|R^{n+1}\|_{H^{-1}}^2
+ \dfrac{\epsilon}{2}\|\nabla \tilde{e}^{n+1}\|_{L^2}^2
\le \dfrac{1+\frac{|\mathbb{T}^d|}{4\pi^2}}{2\epsilon} \dfrac{\tau}{3}\int_{t_n}^{t_{n+1}}\|\phi_{tt}(t)\|_{H^{-1}}^2 dt
+ \dfrac{\epsilon}{2}\|\nabla \tilde{e}^{n+1}\|_{L^2}^2.
\end{align*}
Note that
\begin{align*}
&\| \phi(t_{n+1}) - \phi(t_n) \|_{L^2}^2 \le \tau \int_{t_n}^{t_{n+1}} \|\phi_t(t)\|_{L^2}^2 dt ; \\
& \|\phi(t_{n+1}) - \phi^n\|_{L^2} \le \|\tilde{e}^{n+1}\|_{L^2} + \|\tilde{e}^{n+1} - \tilde{e}^n\|_{L^2} + \| \phi(t_{n+1}) - \phi(t_n)\|_{L^2} ;
\end{align*}
the terms II and III become
\begin{align*}
\text{II}
\le\ &\dfrac{\kappa}{\epsilon} \| \phi(t_{n+1}) - \phi(t_n) \|_{L^2} \|\tilde{e}^{n+1}\|_{L^2}
 \le \dfrac{\kappa}{\epsilon} \left( \dfrac{1}{2} \| \phi(t_{n+1}) - \phi(t_n) \|^2_{L^2} + \dfrac{1}{2}\|\tilde{e}^{n+1}\|^2_{L^2} \right) \\
 \le\ & \dfrac{\kappa}{\epsilon} \left( \dfrac{\tau}{2} \int_{t_n}^{t_{n+1}} \|\phi_t(t)\|_{L^2}^2 \text{d}t + \dfrac{1}{2}\|\tilde{e}^{n+1}\|^2_{L^2} \right) \\
\text{III}
\le\ & \frac{L_{W''}}{\epsilon}\|\phi(t_{n+1})-\phi^n\|_{L^2}\|\tilde{e}^{n+1}\|_{L^2} \\
\le\ & \frac{L_{W''}}{\epsilon} \left(\|\tilde{e}^{n+1}\|_{L^2}^2 + \|\tilde{e}^{n+1}-\tilde{e}^n\|_{L^2}\|\tilde{e}^{n+1}\|_{L^2} +\| \phi(t_{n+1}) - \phi(t_n) \|_{L^2} \|\tilde{e}^{n+1}\|_{L^2} \right) \\
\le\ & \frac{L_{W''}}{\epsilon} \left(\|\tilde{e}^{n+1}\|_{L^2}^2 +\dfrac{1}{4} \|\tilde{e}^{n+1}-\tilde{e}^n\|_{L^2}^2 + \|\tilde{e}^{n+1}\|_{L^2}^2 +\dfrac{\tau}{2} \int_{t_n}^{t_{n+1}} \|\phi_t(t)\|_{L^2}^2 dt + \dfrac{1}{2} \|\tilde{e}^{n+1}\|_{L^2}^2 \right) \\
\le\ & \frac{L_{W''}}{\epsilon} \left(\dfrac{1}{4} \|\tilde{e}^{n+1}-\tilde{e}^n\|_{L^2}^2  +\dfrac{\tau}{2} \int_{t_n}^{t_{n+1}} \|\phi_t(t)\|_{L^2}^2 dt +  \dfrac{5}{2} \|\tilde{e}^{n+1}\|_{L^2}^2 \right).
\end{align*}
Furthermore, term IV gives
\begin{align*}
\text{IV} =\ &-\gamma \Big( \left\langle (-\Delta)^{-1}(f(\phi(t_{n+1}))-f(\phi^n))f'(\phi(t_{n+1})), \tilde{e}^{n+1} \right\rangle \\
       & + \left\langle (-\Delta)^{-1}(f(\phi^n)-\omega)(f'(\phi(t_{n+1})-f'(\phi^n)), \tilde{e}^{n+1} \right\rangle \Big) \\
\le\  & \gamma L_{f'}^2 \|(-\Delta)^{-1}\| \|\phi(t_{n+1})-\phi^n\|_{L^2}\|\tilde{e}^{n+1}\|_{L^2}   + \gamma \tilde{\omega}L_{f''}\|(-\Delta)^{-1}\| \|\phi(t_{n+1})-\phi^n\|_{L^2}\|\tilde{e}^{n+1}\|_{L^2} \\
\le\ & \gamma(L_{f'}^2+\tilde{\omega}L_{f''})\|(-\Delta)^{-1}\| \left( \dfrac{1}{4} \|\tilde{e}^{n+1}-\tilde{e}^n\|_{L^2}^2  +\dfrac{\tau}{2} \int_{t_n}^{t_{n+1}} \|\phi_t(t)\|_{L^2}^2 dt +  \dfrac{5}{2} \|\tilde{e}^{n+1}\|_{L^2}^2 \right)
\end{align*}
Similar as IV, term V has the estimate
\begin{align*}
\text{V} \le M (L_{f'}^2 + \tilde{\omega}L_{f''} ) |\mathbb{T}^d| \left( \dfrac{1}{4} \|\tilde{e}^{n+1}-\tilde{e}^n\|_{L^2}^2  +\dfrac{\tau}{2} \int_{t_n}^{t_{n+1}} \|\phi_t(t)\|_{L^2}^2 dt +  \dfrac{5}{2} \|\tilde{e}^{n+1}\|_{L^2}^2 \right) .
\end{align*}
Inserting the estimates for terms I-V, then combining all the terms involving $\|\tilde{e}^{n+1}-\tilde{e}^n\|_{L^2}^2$ and $(\tau \int_{t_n}^{t_{n+1}} \|\phi_t(t)\|_{L^2}^2 dt +  5 \|\tilde{e}^{n+1}\|_{L^2}^2)$, and note that $\pm \frac{\kappa}{2\epsilon}\|\tilde{e}^{n+1}\|_{L^2}^2$ are canceled, we have
\begin{align*}
& \dfrac{1}{2\tau}(\|\tilde{e}^{n+1}\|_{L^2}^2 - \|\tilde{e}^{n}\|_{L^2}^2 + \|\tilde{e}^{n+1}-\tilde{e}^n\|_{L^2}^2) + \frac{\epsilon}{2} \|\nabla \tilde{e}^{n+1}\|_{L^2}^2 \\
\le\ & \dfrac{1+\frac{|\mathbb{T}^2|}{4\pi^2}}{2\epsilon} \dfrac{\tau}{3}\int_{t_n}^{t_{n+1}}\|\phi_{tt}(t)\|_{H^{-1}}^2 dt + \dfrac{\kappa}{2\epsilon}\|\tilde{e}^n\|_{L^2}^2 - \dfrac{\kappa}{2\epsilon}\|\tilde{e}^{n+1}-\tilde{e}^n\|_{L^2}^2 + \dfrac{\kappa}{2\epsilon}\tau\int_{t_n}^{t_{n+1}}\|\phi_t(t)\|_{L^2}^2\text{d}t \\
& + \Big(\dfrac{L_{W''}}{\epsilon} + (L_{f'}^2 + \tilde{\omega}L_{f''}) \big(\gamma\|(-\Delta)^{-1}\|+M|\mathbb{T}^d| \big) \Big) \left( \dfrac{1}{4} \|\tilde{e}^{n+1}-\tilde{e}^n\|_{L^2}^2  +\dfrac{\tau}{2} \int_{t_n}^{t_{n+1}} \|\phi_t(t)\|_{L^2}^2 dt +  \dfrac{5}{2} \|\tilde{e}^{n+1}\|_{L^2}^2 \right).
\end{align*}
Provided that the condition (\ref{eqn:ES_condition}) holds, then
\begin{align*}
& \dfrac{1}{2\tau}(\|\tilde{e}^{n+1}\|_{L^2}^2 - \|\tilde{e}^{n}\|_{L^2}^2 + \|\tilde{e}^{n+1}-\tilde{e}^n\|_{L^2}^2) + \frac{\epsilon}{2} \|\nabla \tilde{e}^{n+1}\|_{L^2}^2 \\
\le\ & \dfrac{1+\frac{|\mathbb{T}^2|}{4\pi^2}}{2\epsilon} \dfrac{\tau}{3}\int_{t_n}^{t_{n+1}}\|\phi_{tt}(t)\|_{H^{-1}}^2 dt + \dfrac{\kappa}{2\epsilon}\|\tilde{e}^n\|_{L^2}^2 + \dfrac{\kappa}{2\epsilon}\tau\int_{t_n}^{t_{n+1}}\|\phi_t(t)\|_{L^2}^2\text{d}t  + \dfrac{\kappa}{\epsilon} \left(  \dfrac{\tau}{2} \int_{t_n}^{t_{n+1}} \|\phi_t(t)\|_{L^2}^2 dt +  \dfrac{5}{2} \|\tilde{e}^{n+1}\|_{L^2}^2 \right).\\
= \ &  \dfrac{1+\frac{|\mathbb{T}^2|}{4\pi^2}}{2\epsilon} \dfrac{\tau}{3}\int_{t_n}^{t_{n+1}}\|\phi_{tt}(t)\|_{H^{-1}}^2 dt + \dfrac{\kappa}{\epsilon}\tau\int_{t_n}^{t_{n+1}}\|\phi_t(t)\|_{L^2} + \dfrac{\kappa}{2\epsilon}(\|\tilde{e}^n\|_{L^2}^2 + 5\|\tilde{e}^{n+1}\|_{L^2}^2)
\end{align*}
Dropping the $\|\nabla \tilde{e}^{n+1}\|_{L^2}^2$ term on left hand side, multiplying $2\tau$ on both sides, and summing up the above inequality from 0 to $n-1$, we find
\begin{align*}
\|\tilde{e}^n\|_{L^2}^2 - \|\tilde{e}^0\|_{L^2}^2
\le \dfrac{1+\frac{|\mathbb{T}^2|}{4\pi^2}}{3\epsilon} \tau^2 \|\phi_{tt}\|_{L^2(0,T; H^{-1})}^2
+ 2\tau^2 \dfrac{\kappa}{\epsilon} \|\phi_t\|_{L^2(0,T;L^2)}^2  + \tau \dfrac{\kappa}{\epsilon} \Big( \|\tilde{e}^0\|_{L^2}^2 + 6\sum_{k=1}^{n-1}\|\tilde{e}^k\|_{L^2}^2 + 5\|\tilde{e}^n\|_{L^2}^2 \Big).
\end{align*}
Note that $\tilde{e}^0 = 0$, hence the last inequality becomes
\begin{align*}
(1-5\tau\kappa/\epsilon)\|\tilde{e}^n\|_{L^2}^2
\le \ &\dfrac{1+\frac{|\mathbb{T}^2|}{4\pi^2}}{3\epsilon} \tau^2 \|\phi_{tt}\|_{L^2(0,T; H^{-1})}^2
+ 2\tau^2 \dfrac{\kappa}{\epsilon}  \|\phi_t\|_{L^2(0,T;L^2)}^2   + 6\tau \dfrac{\kappa}{\epsilon}  \sum_{k=1}^{n-1}\|\tilde{e}^k\|_{L^2}^2.
\end{align*}
If $\tau \le \epsilon/(10\kappa)$, we obtain
\begin{align*}
\|\tilde{e}^n\|_{L^2}^2
\le \ &\dfrac{1+\frac{|\mathbb{T}^2|}{4\pi^2}}{3\epsilon} 2\tau^2 \|\phi_{tt}\|_{L^2(0,T; H^{-1})}^2
+ 4\tau^2 \dfrac{\kappa}{\epsilon}  \|\phi_t\|_{L^2(0,T;L^2)}^2   + 12\tau \dfrac{\kappa}{\epsilon}  \sum_{k=1}^{n-1}\|\tilde{e}^k\|_{L^2}^2.
\end{align*}
The discrete Gronwall inequality leads to
\begin{align}
\|\tilde{e}^n\|_{L^2}^2 \le e^{12T\kappa/\epsilon} \left( \dfrac{1+\frac{|\mathbb{T}^2|}{4\pi^2}}{3\epsilon/2}  \|\phi_{tt}\|_{L^2(0,T; H^{-1})}^2 + \dfrac{4\kappa}{\epsilon} \|\phi_t\|_{L^2(0,T;L^2)}^2 \right)\tau^2.
\end{align}

We summarize the above discussion as the following theorem:

\begin{theorem}\label{theorem:error_semidiscrete}
Given $T>0$ and an integer $N>0$ such that $\tau = \frac{T}{N}$ and $t_n = n\tau$ for $n=0,1,\cdots, N$. Assume that $\phi_t \in L^2(0,T; L^2)$ and $\phi_{tt} \in L^2(0,T; H^{-1})$, then for $\kappa$ satisfying the energy stability condition (\ref{eqn:ES_condition}), if the time step size  $\tau \le \epsilon/(10\kappa)$, we have
\begin{align}\label{eqn:errorestimate_time}
\|\phi(t_n) - \phi^n\|_{L^2} \le \tilde{C}\tau, \quad \forall n \in [\![ N]\!]
\end{align}
where $\tilde{C} = e^{6T\kappa/\epsilon} \Big( \frac{1+\frac{|\mathbb{T}^d|}{4\pi^2}}{3\epsilon/2}  \|\phi_{tt}\|_{L^2(0,T; H^{-1})}^2 + \frac{4\kappa}{\epsilon} \|\phi_t\|_{L^2(0,T;L^2)}^2 \Big)^{\frac{1}{2}} $ is a constant independent of $\tau$ and $N$.
\end{theorem}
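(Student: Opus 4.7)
The plan is to carry out a standard energy-type error analysis at the semi-discrete level, making crucial use of Theorem \ref{theorem:MPP_semi} so that both the exact solution and the numerical iterates stay in $[0,1]$, which in turn lets us replace all the nonlinear functions appearing in the scheme by their Lipschitz counterparts with constants $L_{W''}, L_{f'}, L_{f''}$. First I would define the error $\tilde{e}^n := \phi(t_n) - \phi^n$, subtract the scheme (\ref{eqn:pACOK_SemiImplicit}) evaluated at step $n+1$ from the continuous equation (\ref{eqn:pACOK}) evaluated at $t_{n+1}$, and isolate a consistency error of the form $R^{n+1} = \tau^{-1}(\phi(t_{n+1})-\phi(t_n)) - \phi_t(t_{n+1})$, which admits the standard $H^s$ estimate in terms of $\tau \int_{t_n}^{t_{n+1}}\|\phi_{tt}(t)\|_{H^s}^2\,dt$ for $s=-1,0$.

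Next I would test the error equation against $\tilde{e}^{n+1}$ in $L^2$. The left-hand side produces the usual telescoping combination $\frac{1}{2\tau}(\|\tilde{e}^{n+1}\|_{L^2}^2 - \|\tilde{e}^n\|_{L^2}^2 + \|\tilde{e}^{n+1}-\tilde{e}^n\|_{L^2}^2) + \epsilon\|\nabla\tilde{e}^{n+1}\|_{L^2}^2$. On the right-hand side the consistency term is controlled by duality between $H^{-1}$ and $H^1$ (absorbing the $H^1$ part into the dissipation term $\epsilon\|\nabla\tilde{e}^{n+1}\|_{L^2}^2$), while the nonlinear pieces $W'(\phi(t_{n+1}))-W'(\phi^n)$, $f(\phi(t_{n+1}))-f(\phi^n)$, and $f'(\phi(t_{n+1}))-f'(\phi^n)$ are each expanded by adding and subtracting $\phi(t_{n+1})-\phi^n$ and then bounded by $L_{W''}, L_{f'}, L_{f''}$ respectively. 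The nonlocal terms IV and V are split into two pieces apiece, one carrying the difference of $f$ values and one carrying the difference of $f'$ values, and the $(-\Delta)^{-1}$ factor is absorbed via $\|(-\Delta)^{-1}\|$ together with the a priori bound $\|f(\phi^n)-\omega\|_{L^\infty} \le \tilde\omega$ which again relies on MPP.

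The main obstacle is the book-keeping that ensures the cross terms of $\|\tilde{e}^{n+1}-\tilde{e}^n\|_{L^2}^2$ and $\|\tilde{e}^{n+1}\|_{L^2}^2$ collected from II, III, IV, V are dominated by the good terms on the left, and that the $\pm\frac{\kappa}{2\epsilon}\|\tilde{e}^{n+1}\|_{L^2}^2$ pieces cancel. Grouping the prefactors produces exactly the quantity $\frac{L_{W''}}{\epsilon} + (L_{f'}^2+\tilde\omega L_{f''})(\gamma\|(-\Delta)^{-1}\| + M|\mathbb{T}^d|)$, which by the energy stability hypothesis (\ref{eqn:ES_condition}) is bounded above by $\kappa/\epsilon$; this is the single algebraic inequality that makes the whole estimate close.

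Finally I would drop the non-negative gradient term, multiply by $2\tau$, and sum from $k=0$ to $n-1$, using $\tilde{e}^0 = 0$. The time-summed truncation and $\phi_t$ contributions collapse into $\tau^2\|\phi_{tt}\|_{L^2(0,T;H^{-1})}^2$ and $\tau^2\|\phi_t\|_{L^2(0,T;L^2)}^2$. The residual contains $\|\tilde{e}^n\|_{L^2}^2$ on both sides with coefficients $1$ and $5\tau\kappa/\epsilon$; the restriction $\tau \le \epsilon/(10\kappa)$ is imposed precisely to absorb the latter into the former, yielding a factor of at least $1/2$ on the left. The resulting inequality
\[
\|\tilde{e}^n\|_{L^2}^2 \le C_1\tau^2 + C_2\tau \sum_{k=1}^{n-1}\|\tilde{e}^k\|_{L^2}^2
\]
is in the form required by the discrete Gronwall inequality, whose application produces the exponential prefactor $e^{12T\kappa/\epsilon}$ and hence the claimed $O(\tau)$ bound with the explicit constant $\tilde C$.
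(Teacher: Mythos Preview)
Your proposal is correct and follows essentially the same route as the paper: the same error equation, the same testing against $\tilde e^{n+1}$, the same $H^{-1}$--$H^1$ pairing for the truncation term, the same splitting of the nonlocal pieces, the same use of the energy-stability bound (\ref{eqn:ES_condition}) to collapse the nonlinear prefactors into $\kappa/\epsilon$, and the same $\tau\le\epsilon/(10\kappa)$ absorption followed by discrete Gronwall. One small clarification: Theorem~\ref{theorem:MPP_semi} alone only pins $\phi^n\in[0,1]$; the bound $\phi(t_{n+1})\in[0,1]$ for the exact solution comes from the continuous MPP (Theorem~\ref{theorem:MPP_continuous}), which you should also invoke when you apply the Lipschitz bounds $L_{W''},L_{f'},L_{f''}$.
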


\section{Fully-discrete Scheme: Maximum Principle Preservation and Energy Stability}

In this section, we propose a fully-discrete scheme by discretizing the spatial operators by a second order finite difference approximation. To this end, we adopt some notations for the finite difference approximation. For the brevity of notations, we will focus the discussion on the 2D case, which can be easily extended to 3D formulation.

\subsection{Second order finite difference scheme for spatial discretizaiton}

We consider $\mathbb{T}^2 = \prod_{i=1}^2 [-X_i,X_i] \subset \mathbb{R}^2$. Let $N_1, N_2$ be positive even integers. Take $h_i = \frac{2X_i}{N_i}, i=1,2$ and $\mathbb{T}^2_h = \mathbb{T}^2\ \cap (\otimes_{i=1}^2 h_i\mathbb{Z})$. We define the index set:
\begin{align*}
S_h &= \left\{ (k_1,k_2)\in\mathbb{Z}^2 | 1\le k_i \le N_i, i=1,2 \right\}.
\end{align*}
Denote by $\mathcal{M}_h$ the collection of periodic grid functions on $\mathbb{T}^2_h$:
\begin{align*}
\mathcal{M}_h = \left\{ f: \mathbb{T}^2_h\rightarrow\mathbb{R} | f_{k_1+m_1N_1, k_2+m_2N_2} =f_{k_1,k_2}, \forall (k_1,k_2)\in S_h, \forall (m_1,m_2)\in \mathbb{Z}^2 \right\}.
\end{align*}
For any $f,g\in\mathcal{M}_h$ and $\textbf{f} = (f^1,f^2)^T, \textbf{g} = (g^1,g^2)^T\in\mathcal{M}_h\times\mathcal{M}_h$, we define the discrete $L^2$ inner product $\langle\cdot,\cdot\rangle_h$, discrete $L^2$ norm $\|\cdot\|_{h,L^2}$ and discrete $L^{\infty}$ norm $\|\cdot\|_{h, L^{\infty}}$ as follows:
\begin{align*}
\langle f, g\rangle_h &= h_x h_y \sum_{(i,j)\in S_h} f_{ij} g_{ij}, \quad \|f\|_{h,L^2} = \sqrt{\langle f, f \rangle_h}, \quad \|f\|_{h, L^{\infty}} = \max_{(i,j)\in S_h} |f_{ij}| ; \\
\langle \textbf{f}, \textbf{g}\rangle_h &= h_x h_y \sum_{(i,j)\in S_h} \left( f_{ij}^1 g_{ij}^1 + f_{ij}^2 g_{ij}^2 \right), \quad  \|\textbf{f}\|_{h,L^2} = \sqrt{\langle \textbf{f}, \textbf{f}\rangle_h}.
\end{align*}
Let $\mathring{\mathcal{M}}_h = \{f\in\mathcal{M}_h | \langle f, 1 \rangle_h = 0\}$ be the collections of all periodic grid functions with zero mean.

We define the second order central difference approximation of the Laplacian operator $\Delta$ as a discrete linear operator $\Delta_h: \mathring{\mathcal{M}}_h \rightarrow \mathring{\mathcal{M}}_h$
\begin{align}
\Delta_h u = f: \Delta_h u_{ij} = \frac{1}{h_1^2}(u_{i-1,j} - 2u_{ij} + u_{i+1,j}) + \frac{1}{h_2^2}(u_{i,j-1} - 2u_{ij} + u_{i,j+1})
\end{align}
where the periodic boundary condition applies when the the indices $i \notin [\![ N_1]\!]$ or $j \notin [\![ N_2]\!]$. Note that $\Delta_h: \mathring{\mathcal{M}}_h \rightarrow \mathring{\mathcal{M}}_h$ is one-to-one, it is safe to define its inverse $(\Delta_h)^{-1}: \mathring{\mathcal{M}}_h \rightarrow \mathring{\mathcal{M}}_h$
\begin{align}
(\Delta_h)^{-1} f = u \quad \text{if and only if} \quad \Delta_h u = f.
\end{align}
We denote by $\|(-\Delta_h)^{-1}\|$ the optimal constant such that $\|(-\Delta_h)^{-1}f\|_{h,L^\infty}\le C \|f\|_{h,L^\infty}$, namely, the norm of the operator $(-\Delta_h)^{-1}$ from $L^{\infty}(\mathring{\mathcal{M}}_h)$ to itself.

Given the discrete Laplacian operator $\Delta_h$ defined above, and denote $\Phi^{n}\approx \phi(x,t_n)|_{\mathbb{T}^2_h}$ the numerical solution, we arrive at the following first order fully-discrete semi-implicit scheme for the pACOK equation (\ref{functional:pOK}): for $\forall n \in [\![ N]\!]$, find $\Phi^{n+1} = (\Phi_{ij}^{n+1}) \in \mathcal{M}_h$ such that
\begin{align}\label{eqn:pACOK_FullDiscrete}
\left(\dfrac{1}{\tau}+\dfrac{\kappa_h}{\epsilon}\right)(\Phi^{n+1}-\Phi^n)  = \epsilon\Delta_h\Phi^{n+1} - \dfrac{1}{\epsilon}W'(\Phi^n) &- \gamma(-\Delta_h)^{-1}(f(\Phi^n)-\omega) \odot f'(\Phi^n) \nonumber \\
&- M \langle f(\Phi^n)-\omega, 1 \rangle_h \text{d}\mathbf{x} f'(\Phi^n),
\end{align}
with $\Phi^0 = (\Phi_{ij}^0) = \phi_0|_{\mathbb{T}^2_h}$ being the given intial data, and $\kappa_h$ the stabilization constant. Here $\text{d}\mathbf{x} = h_1h_2$ and $\odot$ represents pointwise multiplication. The scheme can be reformulated as
\begin{align}\label{eqn:pACOK_FullDiscreteII}
\left(\left(1+\dfrac{\tau\kappa}{\epsilon}\right)I - \tau\epsilon\Delta_h\right)\Phi^{n+1} = \left(1+\dfrac{\tau\kappa}{\epsilon}\right)\Phi^{n}  - \dfrac{\tau}{\epsilon}W'(\Phi^n) & - \tau\gamma(-\Delta_h)^{-1}(f(\Phi^n)-\omega)\odot f'(\Phi^n) \nonumber \\
&- \tau M \langle f(\Phi^n)-\omega, 1 \rangle_h \text{d}\mathbf{x} f'(\Phi^n),
\end{align}
from which the unconditional unique solvability can be guaranteed by realizing the positivity of all the eigenvalues of the operator
$\left( 1+ \tau\kappa\epsilon^{-1} \right) I - \tau \epsilon \Delta_h$ on the left hand side of (\ref{eqn:pACOK_FullDiscreteII}).

\subsection{Maximum principle preservation for fully-discrete scheme}

In this section, we will show that the full-discrete scheme (\ref{eqn:pACOK_FullDiscrete}) is MPP under a condition similar to (\ref{eqn:MPP_condition}). To this end, a discrete counterpart of Lemma \ref{lemma:MPP_condition}  is needed.

\begin{lemma}\label{lemma:MPP_condition2}
Let $\Psi\in\mathcal{M}_h$ be such that $0\le\Psi\le 1$, and define $\mathcal{F}_h: \mathcal{M}_h \rightarrow \mathcal{M}_h$ as follows:
\[
\mathcal{F}_h(\Psi) = \left(1+\dfrac{\tau\kappa_h}{\epsilon}\right)\Psi - \dfrac{\tau}{\epsilon}W'(\Psi) - \tau\gamma(-\Delta_h)^{-1}(f(\psi)-\omega)\odot f'(\Psi) - \tau M \langle f(\psi)-\omega,1 \rangle\emph{d}\mathbf{x} f'(\Psi),
\]
then we have
\[
\max_{0\le\Psi\le 1} \{\mathcal{F}_h(\Psi)\}= 1+\dfrac{\tau\kappa_h}{\epsilon} ; \quad \min_{0\le\Psi\le 1} \{ \mathcal{F}_h(\Psi) \} = 0,
\]
provided that
\begin{align}\label{eqn:MPP_condition2}
\frac{1}{\tau} + \dfrac{\kappa_h}{\epsilon} \ge \dfrac{L_{W''}}{\epsilon} + \tilde{\omega}L_{f''}\Big(\gamma\|(-\Delta_h)^{-1}\| + M|\mathbb{T}^2| \Big).
\end{align}
\end{lemma}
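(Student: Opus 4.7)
The plan is to mirror the proof of Lemma \ref{lemma:MPP_condition} but to carry out all estimates pointwise on the grid, with the continuous $L^\infty$ and integral bounds replaced by their discrete counterparts. First I would verify the two endpoints directly: since $W'(0)=W'(1)=0$ and $f'(0)=f'(1)=0$, substitution gives $\mathcal{F}_h(0)\equiv 0$ and $\mathcal{F}_h(1)\equiv 1+\tau\kappa_h/\epsilon$ as constant grid functions, which realize the claimed minimum and maximum.

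Next, for arbitrary $\Psi\in\mathcal{M}_h$ with $0\le\Psi\le 1$, I would fix a node $(i,j)\in S_h$ and apply Taylor's theorem to $W'$ and $f'$ at the scalar $\Psi_{ij}$. Using $W'(0)=f'(0)=0$ we may write $W'(\Psi_{ij})=\Psi_{ij}W''(\xi^0_{ij})$ and $f'(\Psi_{ij})=\Psi_{ij}f''(\eta^0_{ij})$ with $\xi^0_{ij},\eta^0_{ij}\in(0,\Psi_{ij})\subset(0,1)$. The two nonlocal factors appearing at node $(i,j)$ are controlled by the discrete operator-norm bound $\|(-\Delta_h)^{-1}(f(\Psi)-\omega)\|_{h,L^\infty}\le \|(-\Delta_h)^{-1}\|\,\tilde\omega$ and by the crude sum bound $|\langle f(\Psi)-\omega,1\rangle_h|\le \tilde\omega|\mathbb{T}^2|$; both rely on the pointwise fact that $0\le\Psi\le 1$ forces $0\le f(\Psi)\le 1$ (since $f$ is monotone on $[0,1]$ with $f(0)=0$, $f(1)=1$), so that $|f(\Psi)-\omega|\le\tilde\omega$ everywhere. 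Substituting and factoring out $\Psi_{ij}$ yields
$$
\mathcal{F}_h(\Psi)_{ij}\ \ge\ \Psi_{ij}\Big[\big(1+\tfrac{\tau\kappa_h}{\epsilon}\big)-\tfrac{\tau L_{W''}}{\epsilon}-\tau\tilde\omega L_{f''}\big(\gamma\|(-\Delta_h)^{-1}\|+M|\mathbb{T}^2|\big)\Big]\ \ge\ 0,
$$
where the final inequality is exactly the hypothesis (\ref{eqn:MPP_condition2}) multiplied by $\tau$. Since the bound holds at every node, this proves the minimum claim.

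The upper bound follows by the symmetric expansion around $1$: write $W'(\Psi_{ij})=(\Psi_{ij}-1)W''(\xi^1_{ij})$ and $f'(\Psi_{ij})=(\Psi_{ij}-1)f''(\eta^1_{ij})$ with $\xi^1_{ij},\eta^1_{ij}\in(\Psi_{ij},1)$, and repeat the same estimates to get $\mathcal{F}_h(\Psi)_{ij}\le 1+\tau\kappa_h/\epsilon$ under the same condition. The proof is essentially a pointwise replay of Lemma \ref{lemma:MPP_condition}; no step is genuinely hard. The only mild subtlety worth flagging is that $(-\Delta_h)^{-1}$ is defined only on the zero-mean space $\mathring{\mathcal{M}}_h$, so one has to interpret $(-\Delta_h)^{-1}(f(\Psi)-\omega)$ via the zero-mean projection of $f(\Psi)-\omega$; this is harmless because the $L^\infty$ estimate of the projection is still controlled by $\|(-\Delta_h)^{-1}\|\,\tilde\omega$, and it is consistent with the way the scheme (\ref{eqn:pACOK_FullDiscrete}) is applied in practice.
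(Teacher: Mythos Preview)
Your proposal is correct and follows essentially the same route as the paper: the paper explicitly omits the proof of Lemma~\ref{lemma:MPP_condition2}, stating that it is identical to that of Lemma~\ref{lemma:MPP_condition} with $(-\Delta)^{-1}$ replaced by $(-\Delta_h)^{-1}$ and the integral replaced by the Riemann sum. Your pointwise Taylor expansions at $0$ and $1$, together with the discrete $L^\infty$ bounds on the nonlocal terms, reproduce exactly that argument; the zero-mean subtlety you flag is not addressed in the paper either.
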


The proof of Lemma \ref{lemma:MPP_condition2} is similar to that of Lemma \ref{lemma:MPP_condition}. The only difference is that the Laplacian operator $-\Delta$ is replaced by a discrete Laplacian operator $-\Delta_h$, and the integral term $\int (f(\psi)-\omega) \text{d}x$ is replaced by the Riemann sum. We therefore omit the details.

Now we present the MPP for the fully-discrete sheme (\ref{eqn:pACOK_FullDiscrete}) or (\ref{eqn:pACOK_FullDiscreteII}).

\begin{theorem}\label{theorem:MPP_fully}
The stabilized fully-discrete scheme (\ref{eqn:pACOK_FullDiscrete}) or (\ref{eqn:pACOK_FullDiscreteII}) is MPP provided that the condition (\ref{eqn:MPP_condition2}) holds.
\end{theorem}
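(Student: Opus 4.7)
The plan is to mirror the induction argument used for Theorem \ref{theorem:MPP_semi}, replacing the continuous maximum/minimum point argument with its discrete counterpart for $\Delta_h$. Specifically, I would proceed by induction on $n$: the base case $n=0$ holds by hypothesis on $\Phi^0$, and the inductive step assumes $0 \le \Phi^n \le 1$ pointwise on the grid and shows $0 \le \Phi^{n+1} \le 1$. Under the hypothesis (\ref{eqn:MPP_condition2}), Lemma \ref{lemma:MPP_condition2} gives the two-sided bound $0 \le \mathcal{F}_h(\Phi^n) \le 1 + \tau\kappa_h/\epsilon$ pointwise on the grid, which is the key ingredient produced by the ``non-local parts treated explicitly'' design.

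The bridge I would rely on is the discrete maximum principle for the standard 5-point Laplacian $\Delta_h$: if $\Phi^{n+1}$ attains its maximum over $\mathbb{T}^2_h$ at a grid point $(i^*,j^*)$, then each of $\Phi^{n+1}_{i^*\pm 1,j^*} - \Phi^{n+1}_{i^*,j^*}$ and $\Phi^{n+1}_{i^*,j^*\pm 1} - \Phi^{n+1}_{i^*,j^*}$ is $\le 0$, so $\Delta_h \Phi^{n+1}_{i^*,j^*} \le 0$, hence $-\tau\epsilon\Delta_h\Phi^{n+1}_{i^*,j^*} \ge 0$. Evaluating (\ref{eqn:pACOK_FullDiscreteII}) pointwise at $(i^*,j^*)$ then yields
\[
\left(1+\dfrac{\tau\kappa_h}{\epsilon}\right)\Phi^{n+1}_{i^*,j^*} \le \mathcal{F}_h(\Phi^n)_{i^*,j^*} \le 1+\dfrac{\tau\kappa_h}{\epsilon},
\]
so $\Phi^{n+1}_{i^*,j^*}\le 1$, and since $(i^*,j^*)$ is the maximizer, $\Phi^{n+1}\le 1$ everywhere.

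An entirely symmetric argument at a minimizing grid point $(i_*,j_*)$ gives $\Delta_h \Phi^{n+1}_{i_*,j_*} \ge 0$ and hence $(1+\tau\kappa_h/\epsilon)\Phi^{n+1}_{i_*,j_*} \ge \mathcal{F}_h(\Phi^n)_{i_*,j_*} \ge 0$, producing $\Phi^{n+1}\ge 0$ everywhere; this completes the induction.

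I do not anticipate a real obstacle. The only place deserving care is making sure Lemma \ref{lemma:MPP_condition2} can be applied pointwise at the extremal grid index, which is automatic because $\mathcal{F}_h$ is defined componentwise (the nonlocal operators $(-\Delta_h)^{-1}$ and $\langle\cdot,1\rangle_h$ produce, respectively, a grid function and a scalar that are simply multiplied against $f'(\Phi^n)$ pointwise) and because the bounds on $(-\Delta_h)^{-1}$ and on the Riemann-sum mass defect are already baked into (\ref{eqn:MPP_condition2}). The remark following Theorem \ref{theorem:MPP_semi} transfers verbatim: (\ref{eqn:MPP_condition2}) is satisfied for all $\tau>0$ once $\kappa_h$ is chosen sufficiently large, yielding unconditional MPP of the fully-discrete scheme.
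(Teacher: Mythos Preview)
Your proposal is correct and follows essentially the same approach as the paper: induction on $n$, invoking Lemma \ref{lemma:MPP_condition2} for the pointwise bounds on $\mathcal{F}_h(\Phi^n)$, and using the sign of $\Delta_h\Phi^{n+1}$ at an extremal grid index to close the argument. The paper's proof is identical in structure, only writing out the 5-point stencil explicitly to justify $(\Delta_h\Phi^{n+1})_{i^*j^*}\le 0$ at the maximizer (and $\ge 0$ at the minimizer).
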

\begin{proof}
Assume that $0\le \Phi^n \le 1$, and $\Phi^{n+1}$ is obtained by the scheme  (\ref{eqn:pACOK_FullDiscrete}). Assume $\Phi^{n+1}$ reaches the maximal value at the index $(i^*,j^*)$, then
\[
(\Delta_h\Phi^{n+1})_{i^*j^*} =  \frac{(\Phi^n)_{i^*-1,j^*}+(\Phi^n)_{i^*+1,j^*}-2(\Phi^n)_{i^*j^*}}{h_1^2} + \frac{(\Phi^n)_{i^*,j^*-1}+(\Phi^n)_{i^*,j^*+1}-2(\Phi^n)_{i^*j^*}}{h_2^2}  \le 0,
\]
and
\[
\left(1+\dfrac{\tau\kappa_h}{\epsilon}\right) (\Phi^{n+1})_{i^*j^*} \le \Big(\mathcal{F}_h((\Phi^n)\Big)_{i^*j^*} \le 1+\dfrac{\tau\kappa}{\epsilon} \Rightarrow \Phi^{n+1} \le 1.
\]
Similarly let $(i_*, j_*)$ be the index for the smallest component of $\Phi^{n+1}$, then
\[
(\Delta_h\Phi^{n+1})_{i_*j_*} =  \frac{(\Phi^n)_{i_*-1,j_*}+(\Phi^n)_{i_*+1,j_*}-2(\Phi^n)_{i_*j_*}}{h_1^2} + \frac{(\Phi^n)_{i_*,j_*-1}+(\Phi^n)_{i_*,j_*+1}-2(\Phi^n)_{i_*j_*}}{h_2^2}  \ge 0,
\]
and
\[
\left(1+\dfrac{\tau\kappa_h}{\epsilon}\right) (\Phi^{n+1})_{i_*j_*} \ge \Big(\mathcal{F}_h((\Phi^n)\Big)_{i_*j_*} \ge 0 \Rightarrow \Phi^{n+1} \ge 0,
\]
which completes the proof.
\end{proof}

\subsection{Energy stability for fully-discrete scheme}

While the stabilized fully-discrete scheme (\ref{eqn:pACOK_FullDiscrete}) is MPP, it is also energy stable for the discrete OK energy functional defined below:
\begin{align}\label{eqn:discreteEnergy}
E_h^{\text{pOK}}[\Phi] = & -\frac{\epsilon}{2} \langle \Delta_h\Phi,\Phi \rangle_h + \frac{1}{\epsilon} \langle W(\Phi),1\rangle_h + \frac{\gamma}{2} \Big\langle (-\Delta_h)^{-1}(f(\Phi)-\omega), (f(\Phi)-\omega) \Big\rangle_h \nonumber \\
& + \frac{M}{2} \Big( \langle f(\Phi^n)-\omega, 1 \rangle_h \text{d}\mathbf{x} \Big)^2.
\end{align}

\begin{theorem}
Assume the initial $\Phi^0$ satisfies $0\le \Phi^0\le 1$, then the stabilized fully-discrete semi-implicit scheme  (\ref{eqn:pACOK_FullDiscrete}) or (\ref{eqn:pACOK_FullDiscreteII}) is unconditionally energy stable in the sense that
\begin{align}
E_h^{\emph{pOK}}[\Phi^{n+1}] \le E_h^{\emph{pOK}}[\Phi^{n}]
\end{align}
 provided that
\begin{align}\label{eqn:discreteES_condition}
\frac{\kappa_h}{\epsilon} \ge \frac{L_{W''}}{\epsilon} + (L_{f'}^2+\tilde{\omega}L_{f''})\Big(\gamma\|(-\Delta_h)^{-1}\| + M |\mathbb{T}^2|\Big).
\end{align}
\end{theorem}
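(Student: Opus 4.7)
The plan is to mirror the semi-discrete argument in the proof of Theorem \ref{theorem:EnergyStability}, replacing the continuous $L^2$ inner product by $\langle\cdot,\cdot\rangle_h$, the Laplacian by $\Delta_h$, and the spatial integral by the Riemann sum $\langle\cdot,1\rangle_h\text{d}\mathbf{x}$. First, I would observe that the hypothesis (\ref{eqn:discreteES_condition}) is stronger than the discrete MPP condition (\ref{eqn:MPP_condition2}), so Theorem \ref{theorem:MPP_fully} gives $0\le\Phi^{n}\le 1$ for every $n$. This bound is what allows me to Taylor-expand $W$ and $f$ at $\Phi^{n}$ with remainder coefficients controlled by $L_{W''}, L_{f'}, L_{f''}$ without invoking the extension in (\ref{eqn:f_extension}).

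Next, I would take the discrete inner product of the scheme (\ref{eqn:pACOK_FullDiscrete}) with $\Phi^{n+1}-\Phi^{n}$ and process the four terms on the right-hand side exactly as in the continuous proof. The diffusion term is handled by the symmetry and negative semidefiniteness of $\Delta_h$ on periodic grid functions (discrete summation by parts), giving
\[
-\epsilon\langle \Delta_h\Phi^{n+1},\Phi^{n+1}-\Phi^n\rangle_h
=\tfrac{\epsilon}{2}\bigl(-\langle\Delta_h\Phi^{n+1},\Phi^{n+1}\rangle_h+\langle\Delta_h\Phi^{n},\Phi^{n}\rangle_h-\langle\Delta_h(\Phi^{n+1}-\Phi^n),\Phi^{n+1}-\Phi^n\rangle_h\bigr),
\]
which produces the gradient part of $E_h^{\text{pOK}}$ together with a nonnegative residual. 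For the double-well term, a pointwise Taylor expansion with intermediate value $\xi^n\in(0,1)$ yields the $W$-part of $E_h^{\text{pOK}}$ plus a remainder of the form $\tfrac{1}{2\epsilon}\langle W''(\xi^n)(\Phi^{n+1}-\Phi^n)^2,1\rangle_h$, bounded by $\tfrac{L_{W''}}{2\epsilon}\|\Phi^{n+1}-\Phi^n\|_{h,L^2}^2$.

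The nonlocal Ohta--Kawasaki term is the one place where a genuinely discrete identity is needed: since $(-\Delta_h)^{-1}$ is self-adjoint and positive definite on $\mathring{\mathcal{M}}_h$, I would apply the same $a\cdot(a-b)$ identity to the bilinear form $\langle(-\Delta_h)^{-1}\cdot,\cdot\rangle_h$ after writing $f(\Phi^{n+1})-f(\Phi^n)=f'(\Phi^n)(\Phi^{n+1}-\Phi^n)+\tfrac{1}{2}f''(\eta^n)(\Phi^{n+1}-\Phi^n)^2$. This produces the nonlocal part of $E_h^{\text{pOK}}$ together with two error contributions that I would bound by $\tfrac{\gamma}{2}L_{f'}^2\|(-\Delta_h)^{-1}\|\,\|\Phi^{n+1}-\Phi^n\|_{h,L^2}^2$ and $\tfrac{\gamma}{2}\tilde\omega L_{f''}\|(-\Delta_h)^{-1}\|\,\|\Phi^{n+1}-\Phi^n\|_{h,L^2}^2$ respectively, using $\|f(\Phi^n)-\omega\|_{h,L^\infty}\le\tilde\omega$. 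The penalty term IV is treated identically with $|\mathbb{T}^2|$ and $|\langle f(\Phi^n)-\omega,1\rangle_h\text{d}\mathbf{x}|\le\tilde\omega|\mathbb{T}^2|$ replacing the $(-\Delta_h)^{-1}$ factors.

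Collecting the four estimates, the difference $E_h^{\text{pOK}}[\Phi^{n+1}]-E_h^{\text{pOK}}[\Phi^{n}]$ is bounded by
\[
\left(-\frac{\kappa_h}{\epsilon}+\frac{L_{W''}}{2\epsilon}+\frac{1}{2}(L_{f'}^2+\tilde\omega L_{f''})\bigl(\gamma\|(-\Delta_h)^{-1}\|+M|\mathbb{T}^2|\bigr)\right)\|\Phi^{n+1}-\Phi^n\|_{h,L^2}^2,
\]
plus the nonnegative term $\tfrac{1}{\tau}\|\Phi^{n+1}-\Phi^n\|_{h,L^2}^2$ moved to the left-hand side, so (\ref{eqn:discreteES_condition}) makes the coefficient nonpositive and gives the dissipation. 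The only real obstacle is verifying that the discrete identities (summation by parts for $\Delta_h$ and the quadratic splitting for $(-\Delta_h)^{-1}$) are exact on $\mathring{\mathcal{M}}_h$, which boils down to the self-adjointness of $\Delta_h$ under periodic boundary conditions; once that is in hand, the rest of the argument is a line-by-line transcription of the semi-discrete proof.
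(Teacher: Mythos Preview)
Your proposal is correct and follows essentially the same approach as the paper's own proof, which explicitly states that it is similar to the semi-discrete case (Theorem \ref{theorem:EnergyStability}) and then carries out the line-by-line transcription you describe: taking the discrete inner product with $\Phi^{n+1}-\Phi^n$, applying the $a\cdot(a-b)$ identity to $\Delta_h$ and to the bilinear form $\langle(-\Delta_h)^{-1}\cdot,\cdot\rangle_h$, Taylor-expanding $W$ and $f$ with intermediate points in $(0,1)$ guaranteed by the discrete MPP, and collecting the remainders into exactly the coefficient you wrote. The only point the paper makes slightly more explicit is the additional nonnegative term $\tfrac{\epsilon}{2}\langle -\Delta_h(\Phi^{n+1}-\Phi^n),\Phi^{n+1}-\Phi^n\rangle_h$ retained on the left-hand side alongside $\tfrac{1}{\tau}\|\Phi^{n+1}-\Phi^n\|_{h,L^2}^2$, but you already accounted for this as the ``nonnegative residual'' from the diffusion term.
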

\begin{proof}
The proof is similar to that of Theorem \ref{theorem:EnergyStability}. To see how the discrete operators apply in the proof, we will still show it in details. Taking the discrete $L^2$ inner product with $\Phi^{n+1}-\Phi^{n}$ on the two sides of (\ref{eqn:pACOK_FullDiscrete}), we have
\begin{align}\label{eqn:estimate2}
&\dfrac{1}{\tau} \|\Phi^{n+1} - \Phi^{n}\|_{h, L^2}^2 \nonumber\\
= &\  - \dfrac{\kappa_h}{\epsilon}\|\Phi^{n+1} - \Phi^{n}\|_{h,L^2}^2   +\underbrace{\epsilon \langle \Delta_h\Phi^{n+1}, \Phi^{n+1} - \Phi^{n} \rangle_h }_{\text{I}} \underbrace{ - \epsilon^{-1} \langle W'(\Phi^{n}), \Phi^{n+1} - \Phi^{n}\rangle_h }_{\text{II}} \nonumber\\
& \underbrace{ -\gamma \left\langle (-\Delta_h)^{-1}(f(\Phi^n)-\omega)f'(\Phi^n), \Phi^{n+1}-\Phi^{n} \right\rangle_h }_{\text{III}}  \underbrace{ -M \langle f(\Phi^n)-\omega, 1\rangle\text{d}\textbf{x} \left\langle f'(\Phi^n), \Phi^{n+1}-\Phi^{n} \right\rangle_h }_{\text{IV}}.
\end{align}
Using the identity $a\cdot (a-b) = \frac{1}{2}|a|^2 - \frac{1}{2}|b|^2 + \frac{1}{2}|a-b|^2$ and $b\cdot (a-b) = \frac{1}{2}|a|^2 - \frac{1}{2}|b|^2 - \frac{1}{2}|a-b|^2$, we have:
\begin{align*}
\text{I} =&\ \frac{\epsilon}{2} \Big( \Big\langle \Delta_h\Phi^{n+1}, \Phi^{n+1} \Big\rangle_h - \Big\langle \Delta_h\Phi^{n}, \Phi^{n} \Big\rangle_h + \Big\langle \Delta_h(\Phi^{n+1}-\Phi^{n}), \Phi^{n+1}-\Phi^n \Big\rangle_h \Big); \\
\text{II} =& -\epsilon^{-1}\left\langle 1, W(\Phi^{n+1}) \right\rangle_h + \epsilon^{-1} \left\langle 1, W(\Phi^{n}) \right\rangle_h + (2\epsilon)^{-1} W''(\xi^n) \|\Phi^{n+1}-\Phi^n\|_{h, L^2}^2 ; \\
 \text{III} =&\ -\gamma \Big\langle (-\Delta_h)^{-1}(f(\Phi^{n})-\omega), f(\Phi^{n+1}) - f(\Phi^{n})\Big\rangle_h + \dfrac{\gamma}{2} \Big\langle (-\Delta_h)^{-1}(f(\Phi^{n})-\omega), f''(\eta^n)(\Phi^{n+1} - \Phi^{n})^2\Big\rangle_h \\
 \quad=&\ -\dfrac{\gamma}{2} \Big( \Big\langle (-\Delta_h)^{-1}(f(\Phi^{n+1})-\omega), f(\Phi^{n+1})-\omega \Big\rangle_{h} - \Big\langle (-\Delta_h)^{-1}(f(\Phi^{n})-\omega), f(\Phi^{n})-\omega \Big\rangle_{h}  \\
& \hspace{0.4in} - \Big\langle (-\Delta_h)^{-1}(f(\Phi^{n+1})-f(\Phi^{n})),f(\Phi^{n+1})-f(\Phi^{n}) \Big\rangle_{h} \Big)   \\
& \ + \dfrac{\gamma}{2} \Big\langle (-\Delta_h)^{-1}(f(\Phi^{n})-\omega), f''(\eta^n)(\Phi^{n+1} - \Phi^{n})^2\Big\rangle_h ;\\
 \text{IV} = & -\dfrac{M}{2}\Big( \left(\langle f(\Phi^{n+1}) - \omega, 1 \rangle_h  \text{d}\textbf{x} \right)^2 -
                                                 \left(\langle f(\Phi^{n}) - \omega, 1 \rangle_h  \text{d}\textbf{x} \right)^2 -
                                                 \left(\langle f(\Phi^{n+1}) - f(\Phi^{n}), 1 \rangle_h  \text{d}\textbf{x} \right)^2
                                                  \Big)& \\
& + \dfrac{M}{2} \langle f(\Phi^{n}) - \omega, 1 \rangle_h  \text{d}\textbf{x} f''(\eta^n) \|\Phi^{n+1}-\Phi^n\|_{h,L^2}^2,
\end{align*}
where $\xi^n$ and $\eta^n$ are between $\Phi^n$ and $\Phi^{n+1}$ due to the smoothness up to 2nd order derivative for $f$ and $W$. Note that the condition (\ref{eqn:discreteES_condition}) implies  (\ref{eqn:MPP_condition2}), owing to Theorem \ref{theorem:MPP_fully}, $\Phi^n,\Phi^{n+1} \in [0,1]$, therefore $\xi^n, \eta^n \in (0,1)$. Finally inserting the equalities for I--IV back into (\ref{eqn:estimate2}) and noting that $|f'|<L_{f'}, |f''|\le L_{f''}$ and $|\langle f(\Phi^{n})-\omega, 1\rangle_h  \text{d}\textbf{x}| \le \tilde{\omega}|\mathbb{T}^d|$, it follows that
\begin{align*}
&\dfrac{1}{\tau}\|\Phi^{n+1}-\Phi^n\|_{h,L^2}^2 + \frac{\epsilon}{2}\Big\langle -\Delta_h(\Phi^{n+1}-\Phi^{n}), \Phi^{n+1}-\Phi^n\Big\rangle_h + E_h^{\text{pOK}}[\Phi^{n+1}] - E_h^{\text{pOK}}[\Phi^n] \\
=& -\frac{\kappa_h}{\epsilon}\|\Phi^{n+1}-\Phi^n\|_{h,L^2}^2 + \dfrac{W''(\eta^n)}{2\epsilon}\|\Phi^{n+1}-\Phi^n\|_{h,L^2}^2  \\
& + \dfrac{\gamma}{2} \Big\langle (-\Delta_h)^{-1}(f(\Phi^{n+1})-f(\Phi^n)), f(\Phi^{n+1}-f(\Phi^n)) \Big\rangle_h + \dfrac{\gamma}{2} \Big\langle (-\Delta_h)^{-1}(f(\Phi^{n})-\omega), f''(\eta^n)(\Phi^{n+1} - \Phi^{n})^2\Big\rangle_h \\
& + \dfrac{M}{2} \Big( \langle f(\Phi^{n+1}) - f(\Phi^n), 1 \rangle_h \text{d}\textbf{x} \Big)^2 +  \dfrac{M}{2}  \langle f(\Phi^n) -\mathbb{T}^d, 1 \rangle_h \text{d}\textbf{x}  f''(\eta^n) \|\Phi^{n+1}-\Phi^n\|_{h, L^2}^2 \\
\le& -\frac{\kappa_h}{\epsilon}\|\Phi^{n+1}-\Phi^n\|_{h,L^2}^2
       + \frac{L_{W}}{2\epsilon}\|\Phi^{n+1}-\Phi^n\|_{h,L^2}^2  \\
& + \frac{\gamma}{2}L_{f'}^2 \|(-\Delta_h)^{-1}\| \|(\Phi^{n+1}-\Phi^n)\|_{h,L^2}^2
   + \frac{\gamma}{2}\tilde{\omega}L_{f''}\|(-\Delta_h)^{-1}\| \|\Phi^{n+1}-\Phi^n\|_{h,L^2}^2\\
& + \frac{M}{2} L_{f'}^2  |\mathbb{T}^2|   \|\Phi^{n+1}-\Phi^n\|_{h,L^2}^2
+ \frac{M}{2} \tilde{\omega}L_{f''} |\mathbb{T}^2|   \|\Phi^{n+1}-\Phi^n\|_{h,L^2}^2 \\
= & \left(-\frac{\kappa_h}{\epsilon} +\frac{L_{W''}}{2\epsilon} + \frac{1}{2}(L_{f'}^2+\tilde{\omega}L_{f''})\left(\gamma\|(-\Delta)^{-1}\| + M |\mathbb{T}^2|\right)  \right)  \|\Phi^{n+1}-\Phi^n\|_{h,L^2}^2 \le 0,
\end{align*}
where the last inequality is due to the condition (\ref{eqn:discreteES_condition}). Consequently it leads to the energy stability.
\end{proof}

\subsection{Error estimate for fully-discrete scheme}

Now we perform the error estimate for the fully-discrete scheme (\ref{eqn:pACOK_FullDiscrete}). To begin with, we assume that the condition (\ref{eqn:discreteES_condition}) for the discrete energy stability holds (and therefore the discrete MPP condition (\ref{eqn:MPP_condition2}) holds). We assume that the initial data $\Phi^0$ is bounded $0\le \Phi^0 \le 1$ (and therefore $0\le \Phi^n \le 1$ for $\forall n \in [\![ N]\!]$).

For the rest of this section, we simply denote by $\phi(t_n)$ the true solution $\phi(x,t_n)$ limited on $\mathbb{T}^2$. Then $\phi(t_n)$ solves the following discrete equation:
\begin{align*}\label{eqn:LTE}
\Big( \frac{1}{\tau} + \frac{\kappa_h}{\epsilon} \Big) (\phi(t_{n+1})-\phi(t_n)) = &\ \epsilon \Delta_h \phi(t_{n+1}) - \frac{1}{\epsilon} W'(\phi(t_n))  \\
&- \gamma(-\Delta_h)^{-1}(f(\phi(t_n))-\omega)\odot f'(\phi(t_n)) \\
& - M  \langle f(\Phi^n)-\omega, 1 \rangle_h \text{d}\mathbf{x} f'(\phi(t_n)) + \Gamma^{n+1}
\end{align*}
where $\Gamma^{n+1}$ is the local truncation error and satisfies:
\begin{align}
\|\Gamma^{n+1}\|_{h,L^{\infty}} \le C_1(\tau + h_1^2 + h_2^2)
\end{align}
for some $C_1\ge 0$ depending only on $\phi, T, |\mathbb{T}^2|$ but not on $\tau, h_1$ and $h_2$. More precisely, Taylor expansion results in the estimate
\begin{align*}
\|\Gamma^{n+1}\|_{h,L^{\infty}} \le C\Big( \tau\|\phi_{tt}\|_{L^{\infty}(0,T;L^{\infty})} + (h_1^2 + h_2^2)\|\phi\|_{L^{\infty}(0,T;C^4)}   \Big),
\end{align*}
therefore
\begin{align*}
C_1 \le  \|\phi_{tt}\|_{L^{\infty}(0,T;L^{\infty})} + \|\phi\|_{L^{\infty}(0,T;C^4)}.
\end{align*}
Substract (\ref{eqn:pACOK_FullDiscreteII}) from (\ref{eqn:LTE}), and let $e^n = \phi(t_{n}) - \Phi^n$, one has:
\begin{align*}
\Big( \frac{1}{\tau} + \frac{\kappa_h}{\epsilon} \Big) (e^{n+1}-e^{n}) = &\ \epsilon \Delta_h e^{n+1}  -\epsilon^{-1} \Big(W'(\phi(t_n))-W'(\Phi^n)\Big) \\
&  -\gamma\Big( (-\Delta_h)^{-1}(f(\phi(t_n))-\omega)\odot f'(\phi(t_n)) - (-\Delta_h)^{-1}(f(\Phi^n)-\omega)\odot f'(\Phi^n) \Big)  \\
& -M \Big( \langle f(\phi(t_n))-\omega, 1 \rangle_h \text{d}\mathbf{x} f'(\phi(t_n)) - \langle f(\Phi^n)-\omega, 1 \rangle_h \text{d}\mathbf{x} f'(\Phi^n) \Big) + \Gamma^{n+1}.
\end{align*}
Taking the discrete $L^2$ inner product by $e^{n+1}$ on the two sides yields
\begin{align*}
&\underbrace{\Big( \frac{1}{\tau} + \frac{\kappa_h}{\epsilon} \Big) \langle e^{n+1}-e^{n}, e^{n+1} \rangle_h}_{\text{I}} \\
= &\ \epsilon \langle \Delta_h e^{n+1}, e^{n+1} \rangle_h \underbrace{ -\epsilon^{-1} \Big\langle W'(\phi(t_n))-W'(\Phi^n), e^{n+1} \Big\rangle_h }_{\text{II}} \\
& \underbrace{ -\gamma \Big\langle (-\Delta_h)^{-1}(f(\phi(t_n))-\omega)\odot f'(\phi(t_n)) - (-\Delta_h)^{-1}(f(\Phi^n)-\omega)\odot f'(\Phi^n), e^{n+1} \Big\rangle_h }_{\text{III}} \\
&\underbrace{ -M \Big\langle \langle f(\phi(t_n))-\omega, 1 \rangle_h \text{d}\mathbf{x} f'(\phi(t_n)) -\langle f(\Phi^n)-\omega, 1 \rangle_h \text{d}\mathbf{x} f'(\Phi^n), e^{n+1} \Big\rangle_h}_{\text{IV}}+ \langle \Gamma^{n+1}, e^{n+1} \rangle.
\end{align*}
Terms I and II become
\begin{align*}
\text{I} &= \frac{1}{2}\Big( \frac{1}{\tau} + \frac{\kappa_h}{\epsilon} \Big)\Big( \|e^{n+1}\|_{h, L^2}^2  -  \|e^{n}\|_{h, L^2}^2 +  \|e^{n+1} - e^n\|_{h, L^2}^2  \Big); \\
\text{II} &\le L_{W''}(2\epsilon)^{-1}\Big( \|e^n\|_{h,L^2}^2 + \|e^{n+1}\|_{h,L^2}^2 \Big);
\end{align*}
Furthermore, term III gives
\begin{align*}
\text{III} = \ &-\gamma \Big( \Big\langle (-\Delta_h)^{-1}(f(\phi(t_{n}))-f(\Phi^n))\odot f'(\phi(t_{n})), e^{n+1} \Big\rangle \\
       & + \Big\langle (-\Delta_h)^{-1}(f(\Phi^n)-\omega)\odot (f'(\phi(t_{n})-f'(\Phi^n)), e^{n+1} \Big\rangle \Big) \\
\le\  & \gamma L_{f'}^2 \|(-\Delta_h)^{-1}\| \|e^n\|_{h, L^2}\|e^{n+1}\|_{h, L^2}   + \gamma \tilde{\omega}L_{f''}\|(-\Delta_h)^{-1}\| \|e^n\|_{h, L^2}\|e^{n+1}\|_{h, L^2} \\
\le\ & \dfrac{1}{2}\gamma(L_{f'}^2+\tilde{\omega}L_{f''})\|(-\Delta_h)^{-1}\| \Big( \|e^n\|_{h,L^2}^2 + \|e^{n+1}\|_{h,L^2}^2 \Big).
\end{align*}
Similar as III, term IV has the estimate
\begin{align*}
\text{V} = \dfrac{1}{2} M (L_{f'}^2+\tilde{\omega}L_{f''}) |\mathbb{T}^2| \Big( \|e^n\|_{h,L^2}^2 + \|e^{n+1}\|_{h,L^2}^2 \Big)
\end{align*}
Inserting the estimates for terms I-IV and combine all the terms, we have
\begin{align*}
& \dfrac{1}{2\tau}(\|e^{n+1}\|_{h,L^2}^2 - \|e^{n}\|_{h,L^2}^2 + \|e^{n+1}-e^n\|_{h,L^2}^2) + \epsilon \langle -\Delta_h e^{n+1}, e^{n+1} \rangle_h \\
\le\ & - \dfrac{\kappa_h}{2\epsilon}(\|e^{n+1}\|_{h,L^2}^2 - \|e^{n}\|_{h,L^2}^2 + \|e^{n+1}-e^n\|_{h,L^2}^2)  \\
& + \frac{1}{2}\Big(\dfrac{L_{W''}}{\epsilon} + (L_{f'}^2 + \tilde{\omega}L_{f''}) \big(\gamma\|(-\Delta_h)^{-1}\|+M|\mathbb{T}^2| \big) \Big) \Big( \|e^n\|_{h,L^2}^2 + \|e^{n+1}\|_{h,L^2}^2 \Big) \\
& + \frac{1}{2}\Big(\|\Gamma^{n+1}\|_{h,L^2}^2 + \|e^{n+1}\|_{h,L^2}\Big).
\end{align*}
Owing to the condition (\ref{eqn:discreteES_condition}), the estimate further becomes
\begin{align*}
& \dfrac{1}{2\tau}(\|e^{n+1}\|_{h,L^2}^2 - \|e^{n}\|_{h,L^2}^2 + \|e^{n+1}-e^n\|_{h,L^2}^2) + \epsilon \langle -\Delta_h e^{n+1}, e^{n+1} \rangle_h \\
\le\ & - \dfrac{\kappa_h}{2\epsilon}\Big(\|e^{n+1}\|_{h,L^2}^2 - \|e^{n}\|_{h,L^2}^2 + \|e^{n+1}-e^n\|_{h,L^2}^2\Big) + \frac{\kappa_h}{2\epsilon}\Big( \|e^n\|_{h,L^2}^2 + \|e^{n+1}\|_{h,L^2}^2 \Big) \\
& + \frac{1}{2}\Big(\|\Gamma^{n+1}\|_{h,L^2}^2 + \|e^{n+1}\|_{h,L^2}\Big) \\
=\  & \frac{\kappa_h}{\epsilon} \|e^n\|_{h,L^2}^2 - \frac{\kappa_h}{2\epsilon} \|e^{n+1}-e^n\|_{h,L^2}^2 + \frac{1}{2}\Big(\|\Gamma^{n+1}\|_{h,L^2}^2 + \|e^{n+1}\|_{h,L^2}\Big) .
\end{align*}
Multiplying $2\tau$ on two sides, dropping the term involving $\|e^{n+1}-e^n\|_{h,L^2}^2$, and note that $\langle -\Delta_h e^{n+1}, e^{n+1} \rangle_h \ge 0$, the above inequality becomes
\begin{align*}
\|e^{n+1}\|_{h,L^2}^2 - \|e^{n}\|_{h,L^2}^2  \le \frac{2\kappa_h}{\epsilon} \tau \|e^n\|_{h,L^2}^2 +  \tau \|e^{n+1}\|_{h,L^2}  + \tau \|\Gamma^{n+1}\|_{h,L^2}^2  .
\end{align*}
Summing over $n$ and use $e^0 = 0$, we obtain
\begin{align*}
\|e^n\|_{h,L^2}^2 & \le \frac{2\kappa_h}{\epsilon}\tau \sum_{j=0}^{n-1} \|e^j\|_{h,L^2}^2 + \tau \sum_{j=1}^{n} \|e^j\|_{h,L^2}^2 + n\tau \|\Gamma^{n+1}\|_{h,L^2}^2 \\
& \le \tau \|e^n\|_{h,L^2}^2 + \left(1+\frac{2\kappa_h}{\epsilon}\right) \tau \sum_{j=1}^{n-1}\|e^j\|_{h,L^2}^2 + T \|\Gamma^{n+1}\|_{h,L^2}^2
\end{align*}
If the step size $\tau$ is sufficiently small, say $\tau \le 1/2$, then the above inequality becomes
\begin{align*}
\|e^n\|_{h,L^2}^2  \le  \left(1+\frac{2\kappa_h}{\epsilon}\right) 2\tau \sum_{j=1}^{n-1}\|e^j\|_{h,L^2}^2 + 2T \|\Gamma^{n+1}\|_{h,L^2}^2,
\end{align*}
which lead to
\begin{align*}
\|e^n\|_{h,L^2}^2 \le e^{(1+2\kappa_h\epsilon^{-1})2\tau(n-1)}2T\|\Gamma^{n+1}\|_{h,L^2}^2 \le e^{2T(1+2\kappa_h\epsilon^{-1})}2T |\mathbb{T}^2| C_1^2(\tau+h_1^2 + h_2^2)^2 = C^2(\tau+h_1^2 + h_2^2)^2
\end{align*}
owing to the Gronwall inequality and the fact that $\|\Gamma^{n+1}\|_{h,L^2}^2 \le |\mathbb{T}^2| \|\Gamma^{n+1}\|_{h,L^{\infty}}^2$.

Summarizing the above discussion lead to the following theorem:
\begin{theorem}
Given $T>0$ and an integer $N>0$ such that $\tau = T/N$ and $t_n = n\tau$ for $n=0,1,\cdots,N$. Assume the initial value $\phi_0$ is smooth, periodic and bounded $0\le \phi_0 \le 1$, and the exact solution $\phi(x,t)$ is sufficiently smooth. Let the stabilization constant $\kappa_h$ satisfy the condition (\ref{eqn:discreteES_condition}). We denote by $\{\Phi^n\}_{n=1}^N = \{(\Phi_{ij}^n)\}_{n=1}^N$ the approximate solution calculated by the scheme (\ref{eqn:pACOK_FullDiscrete}) with $\Phi^0 = \phi_0|_{\mathbb{T}^2}$. If the step size $\tau$ is sufficiently small, we have
\begin{align}
\|\phi(t_n) - \Phi^{n}\|_{h,L^2} \le C (\tau + h_1^2 + h_2^2), \quad n \in [\![ N]\!],
\end{align}
where $C>0$ is some generic constant which depends on $\phi, T, \kappa_h, \epsilon, \gamma, M, |\mathbb{T}^2|$ but is independent of $\tau, h_1, h_2$.
\end{theorem}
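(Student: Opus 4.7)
The plan is a consistency-plus-stability argument applied in the discrete $L^2$ inner product $\langle\cdot,\cdot\rangle_h$, paralleling Subsection \ref{subsection:errorestimate_timediscrete} but now absorbing the spatial truncation of the central difference Laplacian. First I would insert the exact grid restriction $\phi(t_n)$ into the scheme (\ref{eqn:pACOK_FullDiscreteII}); Taylor expansion in time yields an $O(\tau)$ piece from approximating $\phi_t$ by the backward difference, and Taylor expansion in space yields $O(h_1^2+h_2^2)$ contributions from $\Delta_h$, from $(-\Delta_h)^{-1}$, and from the Riemann-sum penalty. The resulting identity differs from the genuine scheme only by a local truncation $\Gamma^{n+1}$ satisfying
\begin{equation*}
\|\Gamma^{n+1}\|_{h,L^\infty} \le C_1(\tau + h_1^2 + h_2^2), \qquad C_1 \lesssim \|\phi_{tt}\|_{L^\infty(0,T;L^\infty)} + \|\phi\|_{L^\infty(0,T;C^4)}.
\end{equation*}

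Next I would subtract the scheme from this consistency identity to obtain an equation for $e^n := \phi(t_n) - \Phi^n$, and take the discrete $L^2$ inner product with $e^{n+1}$. The stabilizer/time-difference piece is split via $\langle a-b,a\rangle = \frac{1}{2}(\|a\|^2 - \|b\|^2 + \|a-b\|^2)$, while $\epsilon\langle \Delta_h e^{n+1}, e^{n+1}\rangle_h$ provides a nonnegative diffusion quantity that can be dropped. Because $\phi_0 \in [0,1]$, both $\phi(t_n)$ (by the continuous Theorem \ref{theorem:MPP_continuous}) and $\Phi^n$ (by Theorem \ref{theorem:MPP_fully}) remain in $[0,1]$, so every Lipschitz constant needed, namely $L_{W''}$, $L_{f'}$, $L_{f''}$, and the uniform bound $\tilde\omega$ on $|f(\cdot)-\omega|$, is directly at hand on that interval.

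The two delicate terms are the long-range and penalty contributions. I would treat the long-range term by an add-and-subtract trick: insert $(-\Delta_h)^{-1}(f(\Phi^n)-\omega)\odot f'(\phi(t_n))$ to split the difference into a piece controlled by $L_{f'}^2 \|(-\Delta_h)^{-1}\|\,\|e^n\|_{h,L^2}$ and a piece controlled by $\tilde\omega L_{f''}\|(-\Delta_h)^{-1}\|\,\|e^n\|_{h,L^2}$. The penalty term succumbs to the identical splitting with $|\mathbb{T}^2|$ replacing $\|(-\Delta_h)^{-1}\|$. Cauchy--Schwarz and Young's inequality then turn each cross product into a multiple of $\|e^n\|_{h,L^2}^2 + \|e^{n+1}\|_{h,L^2}^2$, and the assumed lower bound (\ref{eqn:discreteES_condition}) on $\kappa_h$ is precisely what is needed to absorb all the coefficients built from $L_{W''}/\epsilon$, $L_{f'}^2$, $\tilde\omega L_{f''}$, $\gamma\|(-\Delta_h)^{-1}\|$, and $M|\mathbb{T}^2|$.

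What remains is an inequality of the form
\begin{equation*}
\|e^{n+1}\|_{h,L^2}^2 - \|e^n\|_{h,L^2}^2 \le \tfrac{2\kappa_h}{\epsilon}\,\tau\, \|e^n\|_{h,L^2}^2 + \tau\, \|e^{n+1}\|_{h,L^2}^2 + \tau\, \|\Gamma^{n+1}\|_{h,L^2}^2.
\end{equation*}
Requiring $\tau \le \tfrac12$ lets me hide $\tau\|e^{n+1}\|^2$ on the left; summing over $n$ with $e^0 = 0$, using $\|\Gamma^{n+1}\|_{h,L^2}^2 \le |\mathbb{T}^2|\|\Gamma^{n+1}\|_{h,L^\infty}^2$, and invoking the discrete Gronwall inequality produce the desired $\|e^n\|_{h,L^2} \le C(\tau + h_1^2 + h_2^2)$ with $C \sim e^{T(1 + 2\kappa_h/\epsilon)}\sqrt{2T|\mathbb{T}^2|}\,C_1$. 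The main obstacle is the bookkeeping around the nonlocal pair of terms: without MPP the uniform bound $\tilde\omega$ on $f(\Phi^n)-\omega$ is unavailable, and without the energy-stability threshold on $\kappa_h$ the Lipschitz constants cannot be absorbed, so both ingredients have to be lined up quantitatively at the same time to close the estimate.
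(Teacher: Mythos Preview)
Your proposal is correct and follows essentially the same route as the paper: insert $\phi(t_n)$ into the scheme to produce a truncation term $\Gamma^{n+1}$ with $\|\Gamma^{n+1}\|_{h,L^\infty}\le C_1(\tau+h_1^2+h_2^2)$, take the discrete $L^2$ inner product of the error equation with $e^{n+1}$, split the nonlocal and penalty differences via the same add-and-subtract device, use the bound (\ref{eqn:discreteES_condition}) on $\kappa_h$ to absorb the Lipschitz constants, and close with $\tau\le\tfrac12$ plus discrete Gronwall. Your final constant $e^{T(1+2\kappa_h/\epsilon)}\sqrt{2T|\mathbb{T}^2|}\,C_1$ matches the paper's exactly, and your explicit remark that both the continuous MPP (Theorem~\ref{theorem:MPP_continuous}) and the discrete MPP (Theorem~\ref{theorem:MPP_fully}) are needed to keep $\phi(t_n)$ and $\Phi^n$ in $[0,1]$ is a point the paper leaves implicit.
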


\section{MPP schemes for a general Allen-Cahn type model}\label{section:GeneralFramework}

Our study on MPP can be extended into a more general setting. Consider a general Allen-Cahn type dynamics:
\begin{align}\label{eqn:generalAC}
\dfrac{\partial}{\partial t} \phi = \epsilon\Delta\phi - \dfrac{1}{\epsilon}W'(\phi) - \mathcal{L}(f(\phi)-\omega)f'(\phi) - M\int_{\mathbb{T}^d} (f(\phi)-\omega)\ \text{d}x\cdot f'(\phi),
\end{align}
where $\mathcal{L}$ is a positive semi-definite linear operator from $L^{\infty}(\mathbb{T}^d)$ to $L^{\infty}(\mathbb{T}^d)$ with the norm denoted by $\|\mathcal{L}\|$. The last term on the right hand size counts for a possible volume constraint (\ref{eqn:Volume}) when necessary. For instance, if $\mathcal{L} = (-\Delta)^{-1}$,  then the volume constraint (\ref{eqn:Volume}) is necessary, and it recovers the pACOK dynamics (\ref{eqn:pACOK}). If $\mathcal{L} = (I - \gamma^2\Delta)^{-1}$, then the volume constraint is unnecessary, and we set $M = 0$. This general dynamics can be viewed as the $L^2$ gradient flow dynamics associated to the free energy functional
\begin{align}\label{eqn:generalframework}
E^{\text{ge}}[\phi] = \int_{\mathbb{T}^d} \frac{\epsilon}{2}|\nabla\phi|^2 + \frac{1}{\epsilon}W(\phi)\ \text{d}x + \int_{\mathbb{T}^d} |\mathcal{L}^{\frac{1}{2}}(f(\phi)-\omega)|^2\ \text{d}x + \frac{M}{2}\Big( \int_{\mathbb{T}^d} f(\phi)-\omega\ \text{d}x \Big)^2,
\end{align}
where depending on the different form of the operator $\mathcal{L}$, we might or might not need the volume constraint.

Here are some examples which fit into the general framework described above.
\begin{itemize}
\item In the micromagnetic model for garnet films \cite{CondetteMelcherSuli_MathComp2010} with $d = 2$, the operator $\mathcal{L}$ is being characterized by its eigenvalues $\lambda(k) = \frac{1-\exp(-\delta|k|)}{\delta|k|}$. Here $\delta>0$ corresponds to the relative film thickness. There is no volume constraint in this model.

\item A nonlocal geometric variational problem studied by \cite{RenTruskinovsky_Elasticity2000} takes $\mathcal{L} = (I - \gamma^2\Delta)^{-1}$ with no volume constraint. This problem can lead to the FitzHugh-Nagumo system \cite{ChenChoiHuRen_SIMA2018}.

\item We can consider a positive semi-define linear operator $\mathcal{L}$ which is the inverse of the following nonlocal operator $\mathcal{K}: L^2(\mathbb{T}^d) \rightarrow L^2(\mathbb{T}^d)$
\begin{align*}
\mathcal{K}: v(x) \longmapsto \int_{\mathbb{T}^d} K(x-y) (v(x) - v(y))\ \text{d}y,
\end{align*}
in which the kernel $K$ is nonnegative, radial, $\mathbb{T}^d$-periodic with bounded second moment \cite{Du_Book2020}. This can be viewed as a nonlocal OK model for the diblock copolymer system.

\item One example that cannot fit into the general framework (\ref{eqn:generalframework}) but still satisfy the MPP property is  the phase field variational implicit solvation model (pVISM), in which the free energy is formulated as \cite{Zhao_2018CMS}:
\[
E^{\text{pVISM}}[\phi] = \int_{\mathbb{T}^d} \frac{\epsilon}{2}|\nabla\phi|^2 + \frac{1}{\epsilon}W(\phi)\ \text{d}x + \int_{\mathbb{T}^d} f(\phi(x)) U(x;X)\ \text{d}x.
\]
Here $X = (x_1,\cdots, x_m)$ are the locations of the $m$ solute atoms, and $U$ is the potential between the solute atoms $X$ and solvent molecules $x$ (for instance, water). The phase field $\phi$ labels the solvent  so that the nonlocal interaction by $U$ takes integral only in the solvent region. The potential $U$ in pVISM typically consists of two parts, the solute-solvent van der Waals interaction and the electrostatic interaction.  Additionally, the potential $U$ is cut off as a constant near the solute atoms $X$ so that it remains bounded. See \cite{Zhao_2018CMS} and the references therein for the detailed discussion. In the next section, we will use pVISM as an example to show that choosing $f(\phi) = 3\phi^2 - 2\phi^3$ make the MPP while $f(\phi) = \phi$ violates the MPP. See Subsection \ref{subsection:pVISM} for the details.
\end{itemize}

In this general setting, the $L^2$ gradient flow dynamics always hold the MPP as in the following theorem.
\begin{theorem}\label{theorem:generalMPP_continuous}
The general $L^2$ gradient flow dynamics (\ref{eqn:generalAC}) is maximum principle preserving, namely, if $0\le \phi_0 \le 1$, then $0\le \phi(t) \le 1$ for any $t>0$, provided that
\begin{align}\label{eqn_MPPcondition_continuous_generalsetting}
\frac{\epsilon\tilde{\omega}}{6}\Big[ \|\mathcal{L}\| + \tilde{M} |\mathbb{T}^d|  \Big] \le 1,
\end{align}
where $\tilde{M} = M$ if there is a volume constraint (\ref{eqn:Volume}), and $\tilde{M} = 0$ if there is no volume constraint.
\end{theorem}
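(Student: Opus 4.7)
The plan is to adapt the argument of Theorem \ref{theorem:MPP_continuous} with only cosmetic changes, since the algebraic identity that makes the proof work, namely that $f'$ and $W'$ both carry the factor $\phi-\phi^2$, is independent of which nonlocal operator acts on $f(\phi)-\omega$. Throughout, I would replace $f$ by its bounded extension $\tilde{f}$ from (\ref{eqn:f_extension}) so that $\|f(\phi)-\omega\|_{L^\infty}\le \tilde{\omega}$ holds without any a priori control on $\|\phi\|_{L^\infty}$. I also write $\tilde{M}$ for the coefficient of the mean-value term as in the statement, so that the volume-constrained and unconstrained cases can be handled uniformly.

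First I would multiply (\ref{eqn:generalAC}) by $2\phi-1$. Using $f'(\phi)=6(\phi-\phi^2)$ and $W'(\phi)=36(\phi^2-\phi)(2\phi-1)$, one obtains
\begin{align*}
\dfrac{\partial}{\partial t}(\phi^2-\phi) =&\ \epsilon\Delta(\phi^2-\phi) - 2\epsilon|\nabla\phi|^2 - \dfrac{36}{\epsilon}(\phi^2-\phi)(2\phi-1)^2 \\
& - 6\,\mathcal{L}(f(\phi)-\omega)\cdot(\phi-\phi^2)(2\phi-1) - 6\tilde{M}\int_{\mathbb{T}^d}(f(\phi)-\omega)\,\text{d}x\cdot (\phi-\phi^2)(2\phi-1).
\end{align*}
Next I would multiply by $(\phi^2-\phi)^+$ and integrate over $\mathbb{T}^d$. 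The Laplacian term contributes $-\epsilon\|\nabla(\phi^2-\phi)^+\|_{L^2}^2$ after integration by parts, the $|\nabla\phi|^2$ term is manifestly non-positive, and the remaining contributions collect as $|(\phi^2-\phi)^+|^2$ times
\[
-\dfrac{36}{\epsilon}\Big[(2\phi-1)^2 - (A+B)(2\phi-1)\Big],
\]
where $A = \tfrac{\epsilon}{6}\mathcal{L}(f(\phi)-\omega)$ and $B = \tfrac{\tilde{M}\epsilon}{6}\int_{\mathbb{T}^d}(f(\phi)-\omega)\,\text{d}x$.

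The remaining step is to bound $\|A\|_{L^\infty}\le \tfrac{\epsilon}{6}\|\mathcal{L}\|\tilde{\omega}$, using the hypothesized $L^\infty\to L^\infty$ continuity of $\mathcal{L}$ together with the extension $\tilde{f}$, and $|B|\le \tfrac{\tilde{M}\epsilon}{6}\tilde{\omega}|\mathbb{T}^d|$. Hypothesis (\ref{eqn_MPPcondition_continuous_generalsetting}) then gives $\|A\|_{L^\infty}+|B|\le 1$. Whenever $\phi^2-\phi\ge 0$ one has $|2\phi-1|\ge 1$, so the bracket is non-negative there, yielding
\[
\dfrac{1}{2}\dfrac{\text{d}}{\text{d}t}\int_{\mathbb{T}^d}|(\phi^2-\phi)^+|^2\,\text{d}x \le 0.
\]
Since $(\phi_0^2-\phi_0)^+\equiv 0$ for $0\le\phi_0\le 1$, integration in time forces $(\phi^2-\phi)^+\equiv 0$ for all $t>0$, which is equivalent to $0\le\phi(t)\le 1$.

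The only subtlety I anticipate is that the hypothesis controls $\mathcal{L}$ in the $L^\infty\to L^\infty$ norm, so the bound on $A$ fundamentally relies on the pointwise estimate $\|f(\phi)-\omega\|_{L^\infty}\le \tilde{\omega}$ provided by the extension $\tilde{f}$; without this extension one would need an a priori $L^\infty$ bound on $\phi$, which would circularize the argument. A secondary bookkeeping point is the unified treatment of the volume-constraint and unconstrained cases via $\tilde{M}$: when $\tilde{M}=0$ the term $B$ simply drops out, and the threshold (\ref{eqn_MPPcondition_continuous_generalsetting}) reduces accordingly. Beyond these, the proof is a verbatim generalization of Theorem \ref{theorem:MPP_continuous} with $\gamma\|(-\Delta)^{-1}\|$ replaced by $\|\mathcal{L}\|$.
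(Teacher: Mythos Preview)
Your proposal is correct and follows exactly the approach indicated in the paper, which simply states that the proof is identical to that of Theorem \ref{theorem:MPP_continuous} with $\gamma(-\Delta)^{-1}$ replaced by $\mathcal{L}$. Your observations about the role of the extension $\tilde{f}$ and the unified bookkeeping via $\tilde{M}$ are accurate and consistent with how the paper handles these points in the specific case.
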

The proof is identical to that of theorem \ref{theorem:MPP_continuous} by replacing $\gamma(-\Delta)^{-1}$ by $\mathcal{L}$, so we omit it.

\section{Numerical simulations}

In this section, some numerical examples will be presented to validate the proposed schemes. Moreover some interesting patterns arising from the OK model will be shown. To begin with, let us briefly explain how to implement the numerical scheme (\ref{eqn:pACOK_FullDiscreteII}). The implementation is as follows:

\begin{enumerate}
\item At the $n$-th step, take the Discrete Fourier Transform(DFT) on the right hand side $\widehat{RHS}_{jk}$;
\item Calculate the DFT of $\Phi^{n+1}$ as $\hat{\Phi}^{n+1}_{jk} = \frac{\widehat{RHS}_{jk}}{\left(1+\frac{\tau\kappa}{\epsilon}\right) + \frac{4}{h_1^2}\sin^2\left(\frac{j\pi h_1}{2X_1}\right) + \frac{4}{h_2^2}\sin^2\left(\frac{k\pi h_2}{2X_2}\right) }$;
\item Take the inverse DFT of $\hat{\Phi}^{n+1}_{jk}$ to obtain $\Phi^{n+1}$ and move to the $(n+1)$-th step;
\end{enumerate}

Now we solve the pACOK equation (\ref{eqn:pACOK_FullDiscreteII}) coupled with periodic boundary condition. In this section, we fix $\mathbb{T}^2 = [-1,1)^2 \subset \mathbb{R}^2$ and $N = N_1 = N_2 = 256$ unless stated otherwise. We set the stopping criteria for the time iteration by:
\begin{align}
\dfrac{\|\Phi^{n+1}-\Phi^{n}\|_{h,L^{\infty}}}{\tau} \le \text{TOL} = 10^{-3}.
\end{align}
The penalty constant is taken to be sufficiently large $M \gg 1$. We take a sufficiently large value of $\kappa_h>0$ to fulfill the energy stability condition (\ref{eqn:discreteES_condition}) (and therefore fulfill the MPP condition (\ref{eqn:MPP_condition2})), say $\kappa_h = 2000$. Other parameters such as $\epsilon, \gamma, \tau, \omega$ might vary for different simulations.

\subsection{Rate of convergence}

We first of all test the convergence rates and the spatial accuracy of the scheme (\ref{eqn:pACOK_FullDiscreteII}). For this numerical experiment, we fix $\omega = 0.1$, and take a round disk as the initial data $\Phi^0 = 0.5 + 0.5\tanh(\frac{r_0-r}{\epsilon/3})$ with $r_0 = \sqrt{\omega|\mathbb{T}^2|/\pi}+0.1$.  The simulation is performed until $T = 0.02$. For the rate of convergence, we take the solution generated by the scheme (\ref{eqn:pACOK_FullDiscreteII}) with $\tau = 10^{-6}$ and $N=2^8$ (consequently $h = h_1 = h_2 = \frac{2X_1}{N}=\frac{1}{128}$) as the benchmark solution. Then we take several values of step size larger than $\tau = 10^{-6}$, each is the half of the previous one, and compute the discrete $L^2$ error between the numerical solutions with larger step sizes and the benchmark one. Table \ref{table:convergence_rates} presents the errors and the convergence rates based on the data at $T = 0.02$ for the scheme (\ref{eqn:pACOK_FullDiscreteII}) with time step sizes being halved from $\tau = 10^{-4}$ to $10^{-4}/16$. We test the convergence rates for three different values of $\epsilon = 5h, 10h $ and $20h$. $\gamma = 100$ is fixed. We can see from the table that the numerically computed convergence rates all tend to approach the theoretical value 1.

\label{table:convergence_rates}
\begin{table}[H]
\begin{center}
\begin{tabular}{ccccccc}

   & \multicolumn{2}{c}{$\epsilon = 5h$} &\multicolumn{2}{c}{$\epsilon = 10h$} &\multicolumn{2}{c}{$\epsilon = 20h$}  \\ \hline
$\tau $ & Error & Rate & Error & Rate & Error & Rate \\ \hline
1e-4/$2^0$ & 1.936e-1 & --- & 1.555e-1 & --- & 5.902e-2 & --- \\

1e-4/$2^1$ & 1.542e-1& 0.33 & 9.465e-2 & 0.72 & 2.376e-2 & 1.31
	 	 \\
1e-4/$2^2$ & 1.076e-1 & 0.52 & 4.858e-2& 0.96 & 9.233e-3 & 1.36
	 	 \\
1e-4/$2^3$ & 6.423e-2 & 0.74 & 2.247e-2& 1.11 & 3.752e-3 & 1.29
		 \\
1e-4/$2^4$ & 3.270e-2 & 0.97 & 9.787e-3 & 1.20 & 1.556e-3  & 1.27
	 	 \\ 	
1e-6 (BM)  & --- & ---& --- & ---& --- & ---\\ \hline
\end{tabular}
\end{center}
\caption{The errors and the corresponding convergence rates at time $T=0.02$ by the scheme (\ref{eqn:pACOK_FullDiscreteII}) for different values of $\epsilon$. In this simulation, $\omega = 0.1, \gamma = 100, M = 1000, \kappa_h = 2000, N = 256$.}
\end{table}

\subsection{Comparison between $f(\phi) = 3\phi^2 - 2\phi^3$ and $f(\phi) = \phi$ regarding to MPP} \label{subsection:pVISM}

In this section, we show an example to see the effect of $f(\phi) = 3\phi^2 - 2\phi^3$ on MPP. We consider the 1D pVISM system with $f(\phi) = 3\phi^2 - 2\phi^3$, which holds the MPP theoretically, and with $f(\phi) = \phi$, the traditional choice which might lose the MPP. The parameters are taken as $L_x = 5, N = 1024, \epsilon = 50h, \kappa_h = 2000$. The solute atom $X = {(0)}$ (i.e. single solute atom system), and the potential function $U(x;X)$ reads:
\[
U(x;X) = \rho_{\text{w}}\cdot \frac{1}{4\epsilon_{\text{LJ}}}\bigg[\Big(\frac{\sigma_0}{x_{\text{cut}}}\Big)^{12} - \Big(\frac{\sigma_0}{x_{\text{cut}}}\Big)^{6}\bigg] + \frac{Q^2}{8\pi\epsilon_0}\Big( \frac{1}{\epsilon_{\text{w}}} - \frac{1}{\epsilon_{\text{m}}}\Big)\frac{1}{x_{\text{cut}}^2}.
\]
Here $\rho_{\text{w}} = 0.0333 \mathring{\text{A}}^{-3}$ is the constant solvent (water) density, $\epsilon_{\text{LJ}} = 0.3 k_{\text{B}}T$ is the depth of the Lennard-Jones potential well associated with the solute atom, $\sigma_0 = 3.5 \mathring{A}$ is the finite distance at which the Lenard-Jones potential of the solute atom is zero, $x_{\text{cut}} = \max\{|x|,2.5\}$ is the cutoff distance of $x$ from solute atom, $Q = 1e$ is the partial charge of the solute atom, $\epsilon_0 = 1.4321\times 10^{-4} e^2/(k_{\text{B}}T\mathring{\text{A}})$ is the vacuum permittivity, $\epsilon_{\text{m}} = 1$ is the relative permittivity of the solute, and $\epsilon_{\text{w}} = 80$ is the relative permittivity of the solvent. See \cite{Zhao_2018CMS} for the model details.

Figure \ref{fig:pVISM} depicts the numerical equilibrium by taking $f(\phi) = 3\phi^2 - 2\phi^3$ and $f(\phi) = \phi$ in the pVISM system. One can see that for the model with $f(\phi) = 3\phi^2 - 2\phi^3$, the numerical equilibrium remains bounded between 0 and 1, the same as  the theoretical prediction. On the contrary, if $f(\phi) = \phi$, the numerical equilibrium becomes smaller than 0 inside the interface, and greater than 1 outside the interface. Of course, the violation of MPP can be mitigated by letting $\epsilon \rightarrow 0$ by the $\Gamma$-convergence theory \cite{LiZhao_SIAM2013}. However, in real applications, especially in the 3d simulations, $\epsilon$ has to remain relatively large to reduce the computational cost. Therefore, the choice of $f(\phi) = 3\phi^2 - 2\phi^3$ is advantageous of keeping the hyperbolic tangent profile of $\phi$, bounding $0\le \phi \le 1$ and localizing the forces only near the interfaces even for a relatively large $\epsilon$.

\begin{figure}[!htbp]
\centerline{
\includegraphics[width=150mm]{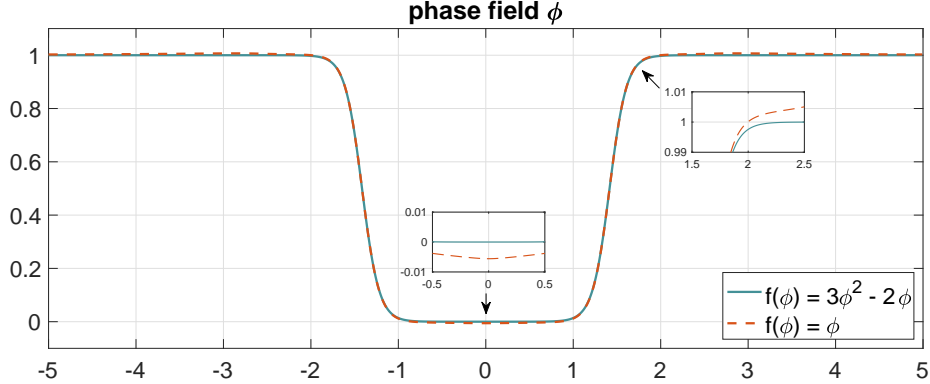}
 }
\caption{Numerical comparison between the model with $f(\phi) = 3\phi^2 - 2\phi^3$ and the one with $f(\phi) = \phi$ for the pVISM system. The model with $f(\phi) = 3\phi^2 - 2\phi^3$ holds the MPP, while the MPP is violated when taking $f(\phi) = \phi$.}
\label{fig:pVISM}
\end{figure}

\subsection{1D coarsening dynamics and MPP}

In this section, we verify the MPP and energy stability for the numerical scheme (\ref{eqn:pACOK_FullDiscreteII}) for the 1D case. We take a piecewise constant function as the initial, with the constant values generated randomly between 0 and 0.8. In the simulation, the parameter values are $T = 1000, \omega = 0.3, \gamma = 500, M = 2000, \tau = 10^{-3}, \kappa = 2000$. Figure \ref{fig:1dcoarsening_gamma500} shows the coarsening dynamics in which the system experiences phase separation from the random initial, then bumps appear from coarsening, evolve into same size, and finally are separated in equal distance.  The light blue curve (values labeled on the left $y$-axis) records the discrete $L^{\infty}$ norm for the solution $2\Phi^n-1$ (note that $|2\Phi^n-1|\le 1$ is equivalent to  $0\le \Phi^n \le 1$), which clearly implies the boundedness of $\Phi^n$ between 0 and 1. The red curve (values labeled on the right $y$-axis) represents the discrete energy $E_h^{\text{pOK}}[\Phi^n]$ in (\ref{eqn:discreteEnergy}) which is monotonically decreasing. Indicated by different colors, the four insets correspond to the four snapshots at $t = 0, 10, 500, 1000$ of the coarsening dynamics.

Now we fix all parameter values as they are in Figure \ref{fig:1dcoarsening_gamma500} but change $\gamma = 2000$, a larger value than it was. As $\gamma$ represents strength of the long-range repulsive interaction, we expect that a larger $\gamma$ generates more bumps. This is verified by Figure \ref{fig:1dcoarsening_gamma2000} in which the system still start from a randomly generated initial, but end up with six equally-sized equally-separated bumps. Meanwhile MPP and energy stability are still held as expected.

\begin{figure}[!htbp]
\centerline{
\includegraphics[width=150mm]{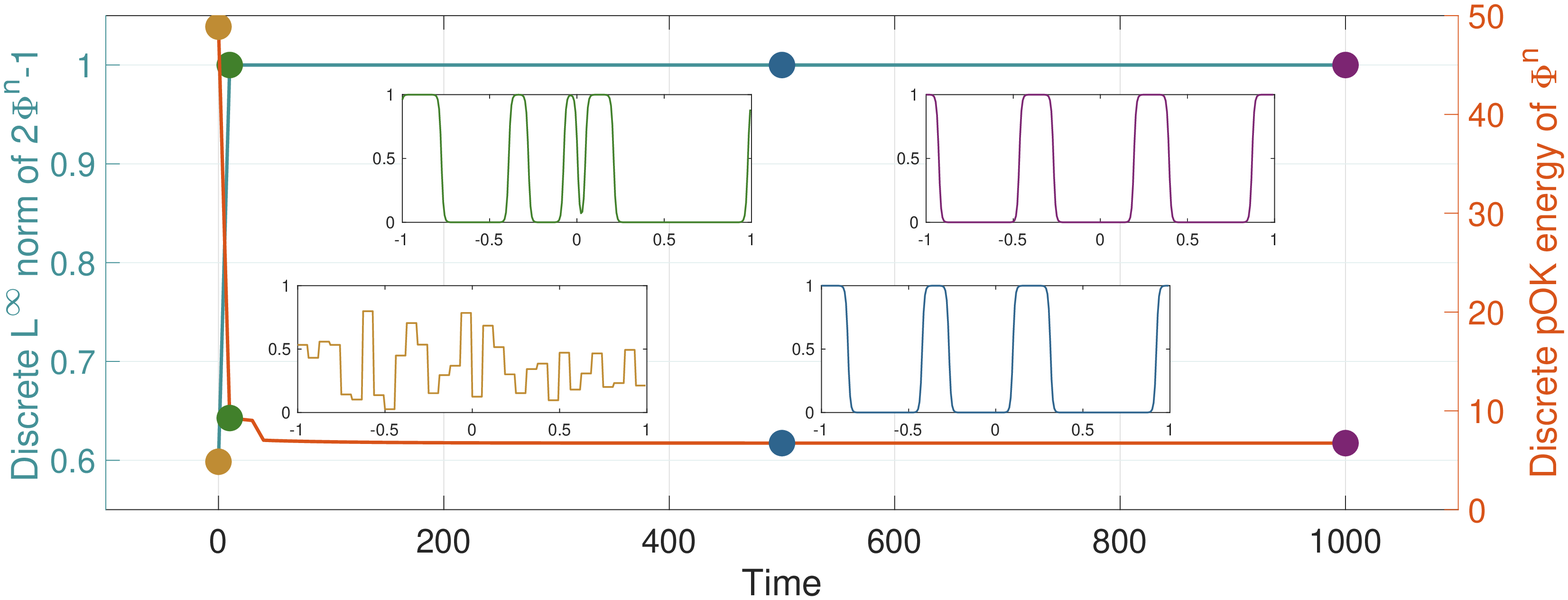}
 }
\caption{A 1D coarsening dynamics process with a small repulsive strength $\gamma$. In this simulation, the parameter values are $T = 1000, \omega = 0.3, \gamma = 500, M = 2000, \tau = 10^{-3}, \kappa = 2000$. The light blue curve records the discrete $L^{\infty}$ norm for the solution $\Phi^n$, which is clearly bounded between 0 and 1. The red curve represents the discrete energy $E_h^{\text{pOK}}[\Phi^n]$ in (\ref{eqn:discreteEnergy}) which is monotonically decreasing. The four insets are snapshots at different times.}
\label{fig:1dcoarsening_gamma500}
\end{figure}

\begin{figure}[!htbp]
\centerline{
\includegraphics[width=150mm]{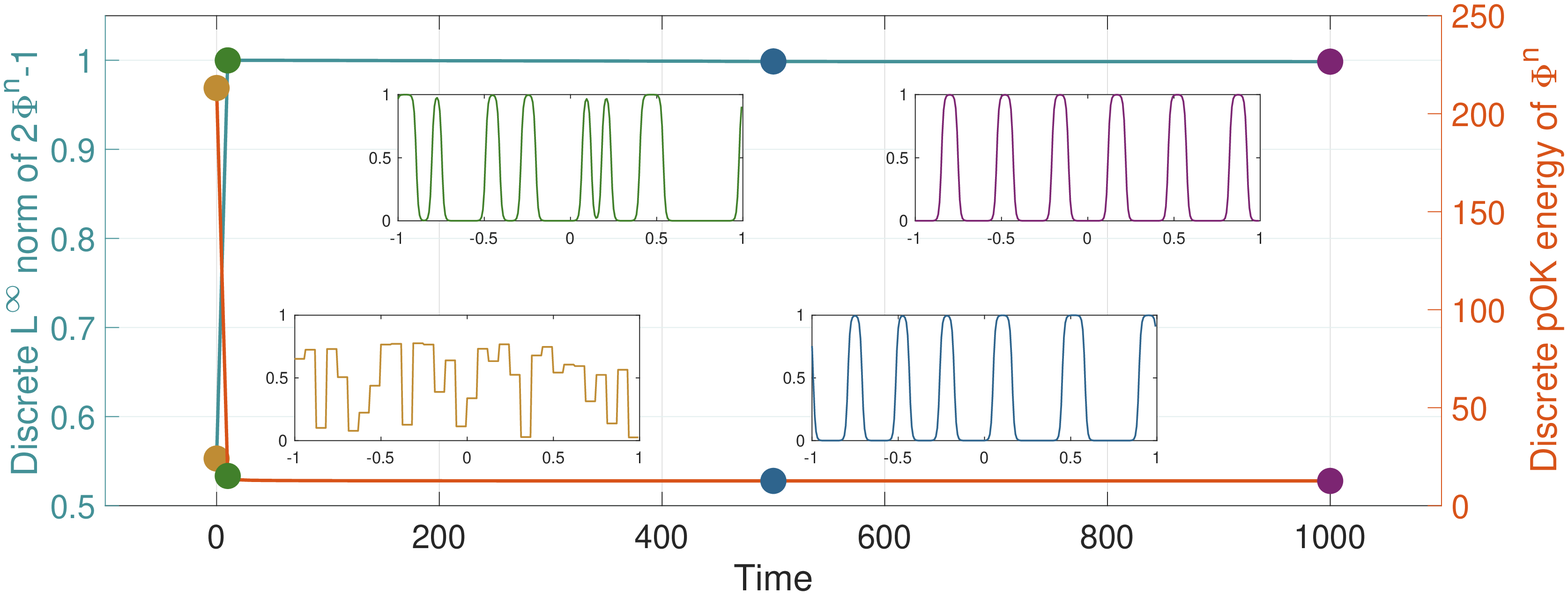}
 }
\caption{A 1D coarsening dynamics process with a large repulsive strength $\gamma$. In this simulation, the parameter values are $T = 1000, \omega = 0.3, \gamma = 2000, M = 2000, \tau = 10^{-3}, \kappa = 2000$.}
\label{fig:1dcoarsening_gamma2000}
\end{figure}

\subsection{2D coarsening dynamics and MPP}

In this section, we solve the equation (\ref{eqn:pACOK_FullDiscreteII}) in 2D and explore the corresponding discrete MPP and discrete energy stability.  We take a $256\times 256$ mesh grid and $T = 100, \omega = 0.15, \gamma = 2000, M = 10^{4}, \tau = 2\cdot 10^{-4}, \kappa = 2000$. Similar as in the 1D case, a 2D random initial is generated on a coarse grid. The coarsening dynamics is presented in Figure \ref{fig:2dcoarsening_gamma1000} in which the random initial is phase separated within a very short time period, resulting in a group of bubbles with different sizes, then the tiny bubbles disappear, other bubbles evolves into equal size, and eventually all the equally-sized bubbles become equally distanced, forming a hexagonal pattern in the 2D domain $\mathbb{T}^2$. Just like the 1D case, we see that the 2D coarsening dynamics also enjoy the MPP property and energy stability in the discrete sense as the theory predicts in the previous sections. The insets are snapshots taken at $t = 0, 1, 10, 100$, each of which has a colored title indicating the corresponding colored marker on the two curves.

When the value of $\gamma$ become larger, say $\gamma = 2000$, but other parameter values are fixed, the stronger long-range repulsive interaction between bubbles lead to more bubbles of equal size and equal distance. This result is depicted in Figure \ref{fig:2dcoarsening_gamma2000} in which the MPP and energy stability are still held.

\begin{figure}[!htbp]
\centerline{
\includegraphics[width=150mm]{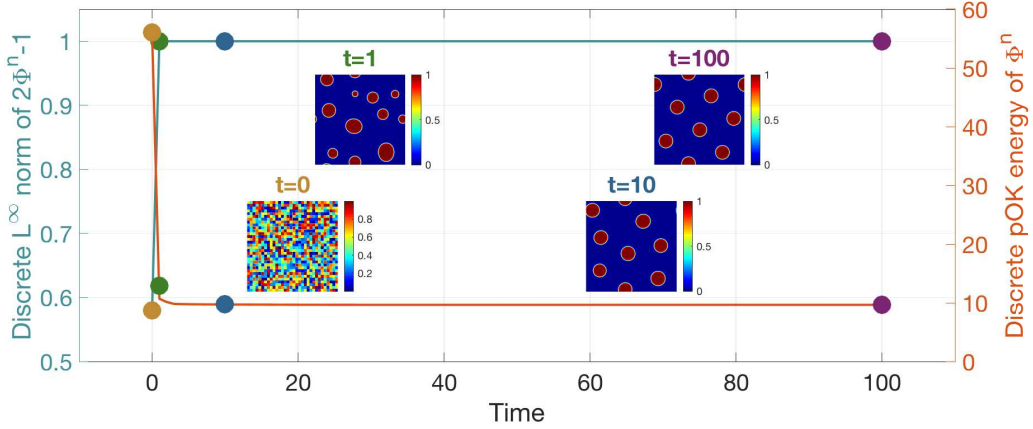}
 }
\caption{A 2D coarsening dynamics process with a small repulsive strength $\gamma$. In this simulation, the parameter values are $T = 100, \omega = 0.15, \gamma = 1000, M = 10^4, \tau = 2\cdot10^{-4}, \kappa = 2000$. The light blue curve is the discrete $L^{\infty}$ norm of $2\Phi^n-1$ which implies the bound of $\Phi^n$ between 0 and 1, while the red curve indicates the monotonic decay of the discrete energy $E_h^{\text{pOK}}$. The four insets are snapshots at different times. }
\label{fig:2dcoarsening_gamma1000}
\end{figure}

\begin{figure}[!htbp]
\centerline{
\includegraphics[width=150mm]{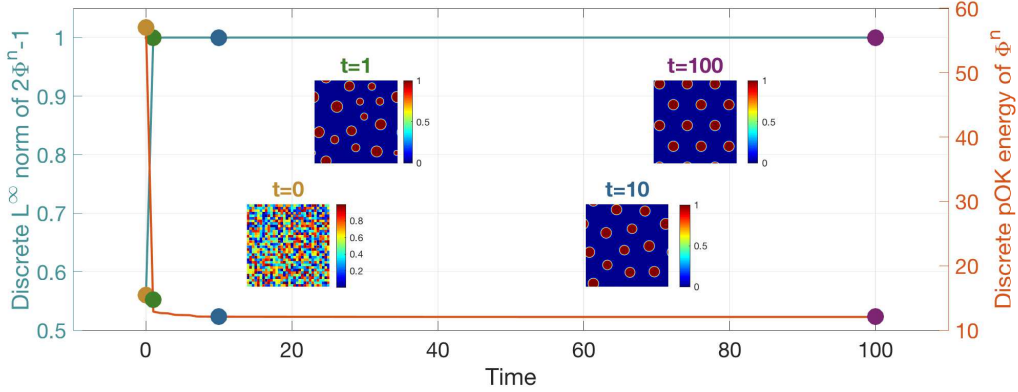}
 }
\caption{A 2D coarsening dynamics process with a large repulsive strength $\gamma = 2000$. Other parameters are the same as that for Figure \ref{fig:2dcoarsening_gamma1000}. Larger $\gamma$ lead to more bubbles forming hexagonal pattern.}
\label{fig:2dcoarsening_gamma2000}
\end{figure}

\section{Summary}

In this paper, we explore the MPP property for the pACOK equation and propose a first order stabilized linear semi-implicit scheme which inherits the MPP and the energy stability in the discrete level. The third order polynomial $f(\phi) = 3\phi^2 - 2\phi^3$ plays a key role in the proof of MPP for the system. We prove the MPP and energy stability in the semi-discrete and fully-discrete level  in which the nonlinear terms $W$ and $f$ need not to be extended to have bounded second order derivative.

In the numerical experiments, we test the rate of convergence for the proposed scheme. We also show that in some examples, a traditional choice of $f(\phi) = \phi$ could violate the MPP. When $\omega \ll 1$, the pACOK dynamics displays pattern of hexagonal bubble assemblies. When the repulsive long-range interaction becomes stronger, there will be more bubbles appearing in the hexagonal equilibria.

This work can be extended along several directions. Firstly, we can study for higher order MPP schemes for the pACOK equation, or generally binary systems with long-range interactions. Secondly, we can further consider the MPP scheme for ternary systems, or a more general system of $N+1$ constituents in which $N$ phase field functions $\{\phi_j\}_{j=1}^N$ are introduced to represent the densities of the $N$ constituents, and the $(N+1)$-th one is implicitly represented by $1- \sum_{j=1}^N \phi_j$.

In this paper, we mainly explore the numerical scheme for $L^2$ gradient flow dynamics based on operator splitting technique. Some other numerical methods, such as exponential time differencing based schemes, could be alternative choices for the MPP scheme, which will also be considered in the future.

\section{Appendix}\label{s:Appendix}

In the appendix, we briefly discuss the wellposedness of the pACOK dynamics (\ref{eqn:pACOK}) and the $L^{\infty}$ bound for the solution of (\ref{eqn:pACOK}).

\begin{definition}
Let $d = 2$ or 3. We call $\phi(t,x)$ a global weak solution to problem (\ref{eqn:pACOK}) if for any $T>0$, $\phi(t,x)$ satisfies
\[
\phi \in C([0,T]; L^p(\mathbb{T}^d)) \cap L^{\infty}(0,T; H^1(\mathbb{T}^d)) \cap L^{2}(0,T;H^2(\mathbb{T}^d)), \quad p \in [2,6)
\]
and the initial condition $\phi(0,x) = \phi_0(x)$. Further, for any $t\in(0,T]$, any test function $w\in L^2(\mathbb{T}^d)$, it holds
\begin{align*}
&\frac{\text{d}}{\text{d}t} \int_{\mathbb{T}^d} \phi(t,x) w(x) \ \text{d}x \\
= & \int_{\mathbb{T}^d} \bigg[\epsilon\Delta\phi - \dfrac{1}{\epsilon}W'(\phi) - \gamma(-\Delta)^{-1}(f(\phi)-\omega)f'(\phi) - M\int_{\mathbb{T}^d} (f(\phi)-\omega)\ \text{d}x\cdot f'(\phi) \bigg] w(x) \text{d}x
\end{align*}
in the distributional sense in $(0,T)$.
\end{definition}

With the definition of the weak solution for the problem (\ref{eqn:pACOK}), we are now ready to state the theorem for its wellposedness.

\begin{theorem}\label{theorem:wellposedness}
Let $d = 2$ or 3, and the initial data $\phi_0\in H^1(\mathbb{T}^d)$. Then there exists a unique global weak solution $\phi$ to the problem (\ref{eqn:pACOK}). Further, the free energy $E^{\emph{pOK}}$ in (\ref{functional:pOK}) decreases as time evolves.
\end{theorem}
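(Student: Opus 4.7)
The plan is to use the minimizing movements (De Giorgi) scheme that the remark preceding the theorem advertises. Given a time step $\tau>0$ and setting $\phi^0=\phi_0$, I would define $\phi^{n+1}$ recursively as a minimizer of $\phi\mapsto \tfrac{1}{2\tau}\|\phi-\phi^n\|_{L^2}^2+E^{\text{pOK}}[\phi]$ over $H^1(\mathbb{T}^d)$. Existence at each step follows from the direct method: $E^{\text{pOK}}$ is non-negative, coercive in $H^1$ via $\tfrac{\epsilon}{2}\|\nabla\phi\|_{L^2}^2$, and weakly lower semicontinuous (gradient term by convexity; $\int W(\phi)$ by Fatou after the compact embedding $H^1\hookrightarrow L^p$, $p<6$; the nonlocal and penalty terms by continuity of $\phi\mapsto f(\phi)$ from $L^2$ to $L^2$ using the bounded Lipschitz extension (\ref{eqn:f_extension})). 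The Euler--Lagrange equation of the minimization is precisely the backward-Euler discretization of (\ref{eqn:pACOK}), and testing it against $\phi^{n+1}-\phi^n$ yields the discrete energy inequality
\[
E^{\text{pOK}}[\phi^{n+1}]+\tfrac{1}{2\tau}\|\phi^{n+1}-\phi^n\|_{L^2}^2\le E^{\text{pOK}}[\phi^n].
\]

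Next I would form the piecewise-constant interpolant $\bar\phi_\tau$ and the piecewise-linear interpolant $\hat\phi_\tau$. Telescoping the discrete energy inequality delivers uniform bounds $\|\bar\phi_\tau\|_{L^\infty(0,T;H^1)}\le C$ and $\|\partial_t\hat\phi_\tau\|_{L^2(0,T;L^2)}\le C$, where $C$ depends only on $E^{\text{pOK}}[\phi_0]$. Since $d\le 3$, the embedding $H^1\hookrightarrow L^p$ is compact for $p\in[2,6)$, so the Aubin--Lions lemma extracts a subsequence converging strongly in $L^2(0,T;L^p)$ and a.e.\ to some $\phi$, weakly-$\ast$ in $L^\infty(0,T;H^1)$, with $\partial_t\hat\phi_\tau\rightharpoonup \partial_t\phi$ in $L^2(0,T;L^2)$; the $L^2(0,T;H^2)$ regularity is then recovered from the equation itself by writing $\epsilon\Delta\phi$ as the difference of terms that are bounded in $L^2$. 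The technical heart of the limit passage is the nonlocal product $\gamma(-\Delta)^{-1}(f(\bar\phi_\tau)-\omega)f'(\bar\phi_\tau)$: the operator $(-\Delta)^{-1}$ gains two derivatives, so strong $L^2(0,T;L^2)$ convergence of $f(\bar\phi_\tau)-\omega$ promotes to strong $L^2(0,T;L^\infty)$ convergence of the nonlocal factor (via $H^2\hookrightarrow L^\infty$ in $d\le 3$), while strong $L^2(0,T;L^p)$ convergence of $f'(\bar\phi_\tau)$ handles the second factor; $W'(\phi)$ and the penalty term are treated in the same spirit using the uniform $L^\infty(0,T;L^6)$ bound.

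Uniqueness follows from a standard energy estimate for $\psi:=\phi_1-\phi_2$: testing the difference of the equations against $\psi$, the local Lipschitz control of every nonlinear and nonlocal term from the $L^\infty(0,T;H^1)\subset L^\infty(0,T;L^6)$ bound, together with $\|(-\Delta)^{-1}\|_{L^2\to L^\infty}<\infty$, produces
\[
\tfrac{d}{dt}\|\psi\|_{L^2}^2+\epsilon\|\nabla\psi\|_{L^2}^2\le C(t)\|\psi\|_{L^2}^2,
\]
with $C(\cdot)\in L^1(0,T)$, after which Gronwall closes the argument. The monotone decay of $E^{\text{pOK}}$ along the weak solution is obtained by passing to the $\liminf$ in the telescoped discrete energy inequality using weak lower semicontinuity. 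The step I expect to be most delicate is establishing the strong convergence of $f(\bar\phi_\tau)$ in a norm strong enough to pass to the limit in both the nonlocal factor and the penalty factor simultaneously; this rests on the boundedness of the extended $f$ in (\ref{eqn:f_extension}) together with the compactness furnished by Aubin--Lions.
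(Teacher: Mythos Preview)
Your proposal is correct and follows exactly the approach the paper indicates: the paper does not actually prove Theorem~\ref{theorem:wellposedness} but merely states that it is ``a standard procedure by following De~Giorgi's minimizing movement scheme'' and defers the details to the preprint~\cite{JooXuZhao_Preprint2020}. Your outline---recursive minimization of $\tfrac{1}{2\tau}\|\phi-\phi^n\|_{L^2}^2+E^{\text{pOK}}[\phi]$, discrete energy inequality, Aubin--Lions compactness, limit passage in the nonlocal term, and Gronwall for uniqueness---is precisely what that scheme entails, and your use of the bounded extension~(\ref{eqn:f_extension}) to control $f(\phi)$ is consistent with the paper's convention.
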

The proof is a standard procedure by following De Giorgi's minimizing movement scheme \cite{Ambrosio_Rend1995,DeGiorgi_RMA1993}. We have a preprint discussing the wellposedness of a more complicated ternary system with long-range interaction, for which the proof of Theorem \ref{theorem:wellposedness} can be viewed as a straightforward application. Therefore we will omit the proof here and recommend the readers to refer to \cite{JooXuZhao_Preprint2020} for the details.

Our next result is regarding to the $L^{\infty}$ bound for the weak solution $\phi(t,x)$ of the problem (\ref{eqn:pACOK}), which can be achieved by De Giorgi's iteration \cite{Chen_Book2003,WuYinWang_Book2006}. Note that in this case, the result holds only for $d = 2$. To begin with, we need an algebraic lemma. Without causing any confusions, we point out that the notations $M, h, k, d, \alpha, \beta$ picked below are exclusively for Lemma \ref{lemma-algebra}, and might not mean the same as they are used elsewhere.

\begin{lemma}\label{lemma-algebra}
Let $\mu(t)$ be a nonnegative, non-increasing function on $[k_0, +\infty)$ that satisfies
\begin{equation}\label{growth-condition}
\mu(h)\leq\Big(\frac{M}{h-k}\Big)^{\alpha}\mu(k)^{\beta}, \quad\forall h>k\geq k_0,
\end{equation}
where $M>0$, $\alpha>0$, $\beta>1$ are all constants. Then we can find a constant $d>0$ such that
$$
  \mu(h)=0, \quad\forall h\geq k_0+d.
$$
\end{lemma}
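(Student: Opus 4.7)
The plan is to use the standard De Giorgi iteration: construct a geometrically increasing sequence of levels that converges to some $k_0 + d$ and show that $\mu$ along this sequence decays to zero provided $d$ is chosen sufficiently large.

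Concretely, I would introduce $k_n = k_0 + d(1 - 2^{-n})$ for $n \geq 0$, where $d > 0$ is a constant to be fixed later, so that $k_n \uparrow k_0 + d$ and $k_{n+1} - k_n = d \, 2^{-(n+1)}$. Setting $a_n = \mu(k_n)$ and applying the hypothesis \eqref{growth-condition} with $h = k_{n+1}$, $k = k_n$ yields the recursion
\[
a_{n+1} \leq \left(\frac{2M}{d}\right)^{\alpha} 2^{\alpha n} a_n^{\beta} = C b^n a_n^\beta,
\]
with $C = (2M/d)^\alpha$ and $b = 2^\alpha > 1$.

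The core step is to show by induction that $a_n \leq a_0 \, r^n$ for a suitable $r \in (0,1)$, provided $a_0$ is small enough. Trying the ansatz $r = b^{-1/(\beta-1)}$, the inductive step reduces to the algebraic condition $C a_0^{\beta-1} \leq r$, which in turn becomes
\[
a_0 \leq C^{-1/(\beta-1)} b^{-1/(\beta-1)^2}.
\]
Since $a_0 = \mu(k_0)$ is a fixed finite quantity and $C = (2M/d)^\alpha$ can be made arbitrarily small by enlarging $d$, I would pick
\[
d \geq 2M \, \mu(k_0)^{(\beta-1)/\alpha} \, b^{1/[\alpha(\beta-1)]},
\]
which guarantees the base condition and hence the induction.

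With this choice, $a_n \to 0$ as $n \to \infty$. Since $\mu$ is non-increasing and $k_n < k_0 + d$, we have $\mu(k_0 + d) \leq \mu(k_n) = a_n \to 0$, so $\mu(k_0 + d) = 0$; monotonicity then gives $\mu(h) = 0$ for all $h \geq k_0 + d$. The only real obstacle is the bookkeeping in the induction and the algebraic selection of the geometric ratio $r$ and threshold for $d$; once the correct exponent $r = b^{-1/(\beta-1)}$ is identified, the verification is routine and the proof is complete.
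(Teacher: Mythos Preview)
Your proposal is correct and is essentially the same argument as the paper's: the paper defines exactly the same dyadic levels $k_s = k_0 + d(1-2^{-s})$, proves by induction the geometric decay $\mu(k_s)\le \mu(k_0)/r^s$ with $r=2^{\alpha/(\beta-1)}$ (the reciprocal of your $r$), and chooses $d = M\,2^{\beta/(\beta-1)}\mu(k_0)^{(\beta-1)/\alpha}$, which coincides with your threshold once $b=2^{\alpha}$ is substituted. The only differences are cosmetic notation and that the paper fixes $d$ by equality rather than inequality.
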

\begin{proof}
To begin with, let
\[
k_s=k_0+d-\frac{d}{2^s}, \quad\forall s\in\mathbb{Z}^+,
\]
where $d$ is defined to be
\begin{align}\label{definition-d}
d=M2^{\frac{\beta}{\beta-1}}\mu(k_0)^{\frac{\beta-1}{\alpha}}.
\end{align}
Take $h = k_{s+1}$ and $k = k_s$ in (\ref{growth-condition}), it yields the recursive relation
\begin{equation}\label{recursive}
\mu(k_{s+1})\leq\frac{M^{\alpha}2^{(s+1)\alpha}}{d^{\alpha}}\mu(k_s)^{\beta}, \quad\forall s\in\mathbb{Z}^+.
\end{equation}

Next we claim that
\begin{equation}\label{claim}
\mu(k_s)\leq\frac{\mu(k_0)}{r^s}, \quad\forall s\in\mathbb{Z}^+,
\end{equation}
where $r>0$ is a constant defined as
\begin{equation}\label{definition-r}
r=2^{\frac{\alpha}{\beta-1}}>1.
\end{equation}
Once \eqref{claim} is verified, the proof of Lemma \ref{lemma-algebra} is done by simply passing $s\rightarrow \infty$
and using the assumption that $\mu$ is non-increasing.

We finally prove (\ref{claim}) by induction. Suppose (\ref{claim}) is valid for $s$, then we obtain from (\ref{definition-d}), (\ref{recursive}) and (\ref{definition-r}) that
\begin{align}
\mu(k_{s+1})\leq\frac{M^{\alpha}2^{(s+1)\alpha}}{d^{\alpha}}\frac{\mu(k_0)^\beta}{r^{\beta{s}}}
 = \frac{M^{\alpha}2^{(s+1)\alpha}}{M^{\alpha}r^{\beta}\mu(k_0)^{\beta-1}}\frac{\mu(k_0)^\beta}{r^{\beta{s}}}
 = \dfrac{\mu(k_0)}{r^{s+1}} \dfrac{r^{s+1}2^{(s+1)\alpha}}{r^{\beta(s+1)}}
 = \frac{\mu(k_0)}{r^{s+1}}
\end{align}
Hence \eqref{claim} is also valid if $s$ is replaced by $s+1$.
\end{proof}

Now we can present our result regarding to the $L^{\infty}$ bound for the weak solution to the problem (\ref{eqn:pACOK}) with initial data $\phi_0$ in 2D.

\begin{theorem}\label{theorem: Linfity}
For any $\phi_0\in H^1(\mathbb{T}^2)\cap L^\infty(\mathbb{T}^2)$ and $T>0$, the unique weak solution
\[
 \phi\in L^\infty(0, T; H^1(\mathbb{T}^2))\cap L^2(0, T; H^2(\mathbb{T}^2))
\]
to the problem (\ref{eqn:pACOK}) satisfies
\begin{equation}
\|\phi\|_{L^\infty([0,T]\times\TT)}\leq\|\phi_0\|_{L^\infty}+C^\ast,
\end{equation}
where $C^\ast>0$ is a constant that only depends on $\|\phi_0\|_{H^1}, \epsilon^{-1}, \omega, \gamma$ and $M$.
\end{theorem}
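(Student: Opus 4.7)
The plan is to apply De~Giorgi's iteration (Lemma~\ref{lemma-algebra}) to the weak formulation of the pACOK equation, exploiting the fact that $W'(\phi)=36\phi(\phi-1)(2\phi-1)$ grows like $\phi^3$ at infinity to absorb the sub-cubic nonlocal and penalty contributions, and then closing the bound through the $2$D Sobolev embedding. First, I would set $k_0:=\max\{2,\|\phi_0\|_{L^\infty}\}$ and, for any $k\ge k_0$, test equation~(\ref{eqn:pACOK}) against $(\phi-k)^+\in L^2(0,T;H^1(\mathbb{T}^2))$ and integrate over $[0,t]\times\mathbb{T}^2$. Since $(\phi_0-k)^+\equiv 0$, this yields the energy identity
\begin{align*}
\tfrac12\|(\phi(t)-k)^+\|_{L^2}^2+\epsilon\!\int_0^t\!\|\nabla(\phi-k)^+\|_{L^2}^2\,ds+\tfrac{1}{\epsilon}\!\int_0^t\!\!\int_{\mathbb{T}^2}\! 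W'(\phi)(\phi-k)^+\,dx\,ds = -\gamma I_1-M I_2,
\end{align*}
with $I_1,I_2$ the nonlocal and penalty contributions. On the set $\{\phi>k\ge 2\}$ one has $W'(\phi)\ge 18\phi^3\ge 18[(\phi-k)^+]^3$, so the $W'$ term on the left is nonnegative and in fact provides coercive control $C_0\epsilon^{-1}\int_0^t\!\!\int[(\phi-k)^+]^3\,dx\,ds$.

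Next I would bound $I_1$ and $I_2$. The energy dissipation from Theorem~\ref{theorem:wellposedness} yields $\phi\in L^\infty(0,T;H^1(\mathbb{T}^2))$, and for $d=2$ the Sobolev embedding $H^1\hookrightarrow L^q$ for every $q<\infty$ implies $f(\phi)-\omega\in L^\infty(0,T;L^p)$ for every $p<\infty$. Elliptic regularity on the torus ($(-\Delta)^{-1}:L^p\to W^{2,p}\hookrightarrow L^\infty$ for $p>1$) then gives
\begin{align*}
\|(-\Delta)^{-1}(f(\phi)-\omega)\|_{L^\infty(0,T;L^\infty)}\le C_\sharp,\qquad \Big|\int_{\mathbb{T}^2}(f(\phi)-\omega)\,dx\Big|\le C_\sharp,
\end{align*}
with $C_\sharp$ depending only on $\|\phi_0\|_{H^1},\epsilon^{-1},\omega,\gamma,M$. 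Since $|f'(\phi)|\le C(1+\phi^2)$, the splitting $\phi^2\le 2[(\phi-k)^+]^2+2k^2$ on $\{\phi>k\}$ gives
\begin{align*}
|I_1|+|I_2|\le C\!\int_0^t\!\!\int_{\mathbb{T}^2}[(\phi-k)^+]^3\,dx\,ds+Ck^2\!\int_0^t\!\!\int_{\mathbb{T}^2}(\phi-k)^+\,dx\,ds,
\end{align*}
and the cubic piece is absorbed into the coercive $W'$ contribution after enlarging $k_0$ if necessary.

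Finally, set $\mu(k):=\sup_{t\in[0,T]}\|(\phi(t)-k)^+\|_{L^2}^2+\int_0^T\|\nabla(\phi-k)^+\|_{L^2}^2\,ds$ and $Q_k:=\{(x,t)\in[0,T]\times\mathbb{T}^2:\phi(x,t)>k\}$. The 2D parabolic Sobolev embedding together with H\"older's inequality controls the surviving linear integral by $C\mu(k)^{1/2}|Q_k|^{1-1/r}$ for suitable $r>1$, and the Chebyshev inequality $(h-k)^2|Q_h|\le\int_{Q_h}[(\phi-k)^+]^2\,dx\,dt\le T\mu(k)$ converts $|Q_h|$ into a power of $\mu(k)/(h-k)$. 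Combining these estimates produces the recursion
\begin{align*}
\mu(h)\le\frac{M_\sharp}{(h-k)^\alpha}\,\mu(k)^\beta,\qquad h>k\ge k_0,
\end{align*}
for some $\alpha>0$, $\beta>1$ and $M_\sharp$ depending only on the stated quantities. Lemma~\ref{lemma-algebra} then produces $d>0$ with $\mu(k_0+d)=0$, i.e.\ $\phi\le k_0+d$ a.e., and the same argument applied to $v=-\phi$ (whose equation has identical cubic-coercive structure, with $W,f$ replaced by $W(-\cdot),f(-\cdot)$) delivers the symmetric lower bound. The main obstacle I expect is the absorption step: quantifying how large $k_0$ must be for the cubic $[(\phi-k)^+]^3$ term from $|I_1|+|I_2|$ to be strictly dominated by the $W'$ contribution, with every constant tracked to depend only on $\|\phi_0\|_{H^1},\epsilon^{-1},\omega,\gamma,M$; closely related is extracting the uniform-in-time $L^\infty$ bound on $(-\Delta)^{-1}(f(\phi)-\omega)$, which essentially uses $d=2$ through the Sobolev chain $H^1\hookrightarrow L^q$, $q<\infty$, and this is exactly why the theorem is restricted to the 2D case.
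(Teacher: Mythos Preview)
Your approach is genuinely different from the paper's. The paper does \emph{not} exploit the sign of $W'$: it lumps $-\epsilon^{-1}W'(\phi)$ together with the nonlocal and penalty terms into a single source $\tilde F(\phi)$, records $\|\tilde F(\phi(t))\|_{L^p}\le M_1$ uniformly in $t$ via the 2D embedding $H^1\hookrightarrow L^q$ (all $q<\infty$), and then runs a purely \emph{spatial} De~Giorgi iteration at the single time $s$ where $t\mapsto\|(\phi(t)-k)^+\|_{L^2}^2$ is maximal, with iteration quantity $\mu(k)=\sup_t|\{x:\phi(t,x)>k\}|$. Your route instead keeps $W'$ on the coercive side and bounds only the nonlocal factor $(-\Delta)^{-1}(f(\phi)-\omega)$ in $L^\infty_{t,x}$; this uses the same 2D Sobolev input and is a sound idea.

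The execution, however, has a gap. After absorbing only the cubic piece you are left with $Ck^2\int_0^t\!\int(\phi-k)^+$, and tracing your parabolic Sobolev/Chebyshev steps the recursion you actually produce carries a $k$-dependent prefactor, roughly $\mu(h)\le Ch^{4}(h-k)^{-\alpha}\mu(k)^\beta$. This is \emph{not} of the form assumed in Lemma~\ref{lemma-algebra}, where the constant $M$ must be independent of $h,k$; your claimed recursion $\mu(h)\le M_\sharp(h-k)^{-\alpha}\mu(k)^\beta$ does not follow as written. The clean fix is to notice that your own ingredients give more than you use: on $\{\phi>k\ge 1\}$ one has the exact identity $W'(\phi)=6(2\phi-1)\,|f'(\phi)|$, so once $k_0\ge\max\bigl\{\|\phi_0\|_{L^\infty},\,\tfrac12+\tfrac{\epsilon}{12}(\gamma+M)C_\sharp\bigr\}$ the pointwise inequality $\epsilon^{-1}W'(\phi)\ge(\gamma+M)C_\sharp|f'(\phi)|$ holds on $\{\phi>k_0\}$, the \emph{entire} right-hand side is absorbed, and testing with $(\phi-k_0)^+$ yields $\tfrac{d}{dt}\|(\phi(t)-k_0)^+\|_{L^2}^2\le0$ directly. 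No iteration is needed, and the bound is even independent of $T$. Your underestimate $W'(\phi)(\phi-k)^+\ge C_0[(\phi-k)^+]^3$ discards exactly the piece that closes the argument in one stroke.
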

\begin{proof}
Let us denote
\[
  l=\|\phi_0\|_{L^\infty},
\]
the test function
\[
\xi(t,x)=(\phi(t,x)-k)^+\chi_{[t_1,t_2]},\quad\forall k>l\ \text{and}\ t_2>t_1
\]
and
\[
\tilde{F}(\phi)=-\frac{1}{\epsilon}W'(\phi)-\gamma(-\Delta)^{-1}\big(f(\phi)-\omega\big)f'(\phi)-M\Big(\int_{\TT}(f(\phi) - \omega)\,\ud{x}\Big)f'(\phi)
\]
Then it is immediate to check for any $p>2$, there exists $M_1>0$ that only depends on $p$, $\|\phi_0\|_{H^1}$, and coefficients of the equation, such that
\[
\|\tilde{F}(\phi(t))\|_{L^p}\leq M_1, \quad\forall t\in [0, T].
\]
Consider $\xi$ as a test function for the weak solution $\phi$, we obtain that
\begin{align}\label{integral-equality-1}
&\iint_{[0, T]\times\TT}\partial_t(\phi-k)^+(\phi-k)^+\chi_{[t_1,t_2]}\,\ud{x}\ud{t}+\iint_{[0, T]\times\TT}\big|\nabla(\phi-k)^+\big|^2\chi_{[t_1,t_2]}\,\ud{x}\ud{t}\nonumber\\
=&\iint_{[0, T]\times\TT}\tilde{F}(\phi)(\phi-k)^+\chi_{[t_1,t_2]}\,\ud{x}\ud{t}.
\end{align}
If we denote
\[
\I_k(t)=\int_{\TT}|(\phi(t,x)-k)^+|^2\,\ud{x},
\]
we get from \eqref{integral-equality-1} that
\begin{align}\label{integral-inequality-1}
\frac12\Big[\I_k(t_2)-\I_k(t_1)\Big]+\int_{t_1}^{t_2}\int_{\TT}\big|\nabla(\phi-k)^+\big|^2\,\ud{x}\ud{t}
\leq\int_{t_1}^{t_2}\int_{\TT}\big|\tilde{F}(\phi)\big|(\phi-k)^+\,\ud{x}\ud{t}.
\end{align}

Suppose $\I_k(t)$ attains its maximum value at $s\in [0, T]$ (assume $s>0$ without loss of generalization). Then
\[
\I_k(s)-\I_k(s-\eps)\geq 0,
\]
for any $0<\eps<s$, hence we derive from \eqref{integral-inequality-1} that
\begin{equation}\label{integral-inequality-2}
\int_{s-\eps}^{s}\int_{\TT}\big|\nabla(\phi-k)^+\big|^2\,\ud{x}\ud{t}
\leq\int_{s-\eps}^{s}\int_{\TT}\big|\tilde{F}(\phi)\big|(\phi-k)^+\,\ud{x}\ud{t}.
\end{equation}
Let us divide both sides of \eqref{integral-inequality-2} by $\eps$ and send $\eps\rightarrow 0^+$, it yields
\begin{equation}\label{key-inequality-1}
\int_{\TT}\big|\nabla(\phi(s,x)-k)^+\big|^2\,\ud{x}\leq \int_{\TT}\big|\tilde{F}(\phi(s,x))\big|(\phi(s,x)-k)^+\,\ud{x}
\end{equation}

Let us denote
$$
  \varphi(t,x)=(\phi(t,x)-k)^+, \quad\overline{\varphi}(t)=\frac{1}{|\TT|}\int_{\TT}\varphi(t,x)\,\ud{x}.
$$
By Sobolev embedding, Poincare's inequality and Young's inequality, we have
\begin{align}\label{Sobolev-embedding}
\Big(\int_{\TT}|\varphi(s, x)|^p\,\ud{x}\Big)^{\frac{2}{p}}
&\leq C\int_{\TT}|\varphi(s,x)|^2\,\ud{x}+C\int_{\TT}|\nabla\varphi(s,x)|^2\,\ud{x}\non\\
&\leq C\int_{\TT}|\varphi(s,x)-\overline{\varphi}(s)|^2\,dx+C\int_{\TT}|\overline{\varphi}(s)|^2\,\ud{x}+C\int_{\TT}|\nabla\varphi(s,x)|^2\,\ud{x}\non\\
&\leq C\int_{\TT}|\nabla\varphi(s,x)|^2\,\ud{x}+C\int_{\TT}|\varphi(s,x)|^2\,\ud{x},
\end{align}
where $C>0$ is a generic constant. If we further denote
$$
  F(t,x)=|\tilde{F}(\phi(t,x))|+|\varphi(t, x)|,
$$
then $\forall p>2$ we get
after combining \eqref{key-inequality-1} with \eqref{Sobolev-embedding} that
\begin{equation}\label{key-inequality-2}
\Big(\int_{\TT}|\varphi(s, x)|^p\,\ud{x}\Big)^{\frac{2}{p}}\leq C\int_{\TT}F(s, x)|\varphi(s,x)|\,\ud{x}.
\end{equation}
Note that
\begin{equation}\label{bound-F}
\|F(t)\|_{L^p}\leq M_2, \quad\forall t\in [0, T].
\end{equation}

Next, we denote
$$
  B_k(t)=\{x\in\TT: \phi(t, x)>k\}.
$$
Then it follows from \eqref{key-inequality-2} and  H\"{o}lder's inequality that
$$
  \Big(\int_{B_k(s)}|\varphi(s, x)|^p\,\ud{x}\Big)^{\frac{2}{p}}\leq C\int_{B_k(s)}F(s, x)|\varphi(s,x)|\,\ud{x}
  \leq C\Big(\int_{B_k(s)}|\varphi(s, x)|^p\,\ud{x}\Big)^{\frac{1}{p}}\Big(\int_{B_k(s)}|F(s, x)|^q\,\ud{x}\Big)^{\frac{1}{q}} ,
$$
where $q$ is the H\"older conjugate of $p$. It further implies
\begin{equation}\label{key-inequality-3}
\Big(\int_{B_k(s)}|\varphi(s, x)|^p\,\ud{x}\Big)^{\frac{1}{p}}\leq C\Big(\int_{B_k(s)}|F(s, x)|^q\,\ud{x}\Big)^{\frac{1}{q}},
\end{equation}
where $C>0$ only depends on $p$, $\|\phi_0\|_{H^1}$, and coefficients of equation (\ref{eqn:pACOK}). As a consequence, for any $1<m<p-1$ and its H\"older conjugate $m'$, it follows from \eqref{key-inequality-3} that
\begin{align}\label{key-inequality-4}
\Big(\int_{B_k(s)}|\varphi(s, x)|^p\,\ud{x}\Big)^{\frac{1}{p}}
\leq C\Big(\int_{B_k(s)} 1^m\,\ud{x}\Big)^{\frac{1}{mq}}\Big(\int_{\TT}|F(s, x)|^{m'q}\,\ud{x}\Big)^{\frac{1}{m'q}}
\leq C|B_k(s)|^{\frac{p-1}{mp}}
\end{align}

To conclude, on one hand, using H\"{o}lder inequality and \eqref{key-inequality-4} yield
\begin{align}\label{direction-1}
\I_k(t)\leq\I_k(s)\leq \Big(\int_{B_k(s)}|\varphi(s, x)|^p\,\ud{x}\Big)^{\frac{2}{p}}|B_k(s)|^{\frac{p-2}{p}}
\leq C|B_k(s)|^{\frac{2p-2}{mp}+\frac{p-2}{p}}, \quad\forall t\in [0, T].
\end{align}
On the other hand
\begin{align}\label{direction-2}
(h-k)^2|B_h(t)|\leq\int_{B_h(t)}\big|(\phi-k)^+\big|^2\,\ud{x}\leq\int_{B_k(t)}\big|(\phi-k)^+\big|^2\,\ud{x}\leq\I_k(t), \quad\forall h>k,\,t\in [0, T]
\end{align}
due to the fact that $\varphi\geq h-k$ on $B_h(t)$ and $B_h(t)\subset B_k(t)$. Therefore, if we denote
$$
  \mu(k)=\sup_{t\in[0, T]}|B_k(t)|,
$$
we get from \eqref{direction-1} and \eqref{direction-2} that
\begin{equation}\label{iterative-inequality}
\mu(h)\leq \Big(\frac{C}{h-k}\Big)^2\mu(k)^{\frac{2p-2}{mp}+\frac{p-2}{p}}.
\end{equation}
Note that
$$
  \frac{2p-2}{mp}+\frac{p-2}{p}>1
$$
by the choice of $m$, $p$, hence using Lemma \ref{lemma-algebra} we know that
$$
  \mu(l+C^\ast)=\sup_{t\in [0, T]}|B_{k+C^\ast}(t)|=0,
$$
which indicates
\begin{equation}
\phi(t, x)\leq l+C^\ast, \quad\forall (t, x)\in [0, T]\times\TT.
\end{equation}
Hence the proof is complete.

\end{proof}


\section{Acknowledgements}

X. Xu's work is supported by a grant from the Simons  Foundation through grant No. 635288;
Y. Zhao's work is supported by a grant from the Simons Foundation through Grant No. 357963.


\newpage


\bibliography{OhtaKawasaki}

\begin{thebibliography}{10}
\expandafter\ifx\csname url\endcsname\relax
  \def\url#1{\texttt{#1}}\fi
\expandafter\ifx\csname urlprefix\endcsname\relax\def\urlprefix{URL }\fi
\expandafter\ifx\csname href\endcsname\relax
  \def\href#1#2{#2} \def\path#1{#1}\fi

\bibitem{OhtaKawasaki_Macromolecules1986}
T.~Ohta, K.~Kawasaki, Equilibrium morphology of block copolymer melts,
  Macromolecules 19~(10) (1986) 2621--2632.

\bibitem{Hamley_Wiley2004}
I.~Hamley, Developments in block copolymer science and technology, Wiley, New
  York, 2004.

\bibitem{Zhao_2018CMS}
Y.~Zhao, Y.~Ma, H.~Sun, B.~Li, Q.~Du, A new phase-field approach to variational
  implicit solvation of charged molecules with the {C}oulomb-field
  approximation, Comm. Math. Sci., to appear, 2018 (2018).

\bibitem{WangRenZhao_CMS2019}
C.~Wang, X.~Ren, Y.~Zhao, Bubble assemblies in ternary systems with long range
  interaction, Comm. Math. Sci. 17 (2019) 2309--2324.

\bibitem{XuZhao_JSC2019}
X.~Xu, Y.~Zhao, Energy stable semi-implicit schemes for
  {A}llen-{C}ahn-{O}hta-{K}awasaki model in binary system, J. Sci. Comput. 80
  (2019) 1656--1680.

\bibitem{TangYang_JCM2016}
T.~Tang, J.~Yang, Implicit-explicit scheme for the {A}llen-{C}ahn equation
  preserves the maximum principle, J. Comp. Math. 34 (2016) 451--461.

\bibitem{ShenTangYang_CMS2016}
J.~Shen, T.~Tang, J.~Yang, On the maximum principle preserving schemes for the
  generalized {A}llen-{C}ahn equation, Commun. Math. Sci. 14 (2016) 1517--1534.

\bibitem{HouTangYang_JSC2017}
T.~Hou, T.~Tang, J.~Yang, Numerical analysis of fully discretized
  {C}rank-{N}icolson scheme for fractional-in-space {A}llen-{C}ahn equations,
  J. Sci. Comput. 72 (2017) 1214--1231.

\bibitem{DuJuLiQiao_SINA2019}
Q.~Du, L.~Ju, X.~Li, Z.~Qiao, Maximum principle preserving exponential time
  differencing schemes for the nonlocal {A}llen-{C}ahn equation, SIAM J. Numer.
  Anal. 57 (2019) 875--898.

\bibitem{LiaoTangZhou_SINA2019}
H.~Liao, T.~Tang, T.~Zhou, On energy stable maximum-principle preserving second
  order bdf scheme with variable steps for the {A}llen-{C}ahn equation,
  submitted to SIAM, J. Numer. Anal. (2019).

\bibitem{LiaoTangZhou_JCP2019}
H.~Liao, T.~Tang, T.~Zhou, An adaptive second order maximum-principle
  preserving scheme for the time-fractional {A}llen-{C}ahn equation, submitted
  to J. Comput. Phys. (2019).

\bibitem{CondetteMelcherSuli_MathComp2010}
N.~Condette, C.~Melcher, E.~Suli, Spectral approximation of pattern-forming
  nonlinear evolution equations with double-well potentials of quadratic
  growth, Math. Comput. 80 (2010) 206--223.

\bibitem{RenTruskinovsky_Elasticity2000}
X.~Ren, L.~Truskinovsky, Finite scale microstructures in nonlocal elasticity,
  J. Elasticity 59 (2000) 319--355.

\bibitem{DuNicolaides_SINA1991}
Q.~Du, R.~Nicolaides, Numerical analysis of a continuum model of phase
  transition, SIAM, J. Numer. Anal. 28 (1991) 1310--1322.

\bibitem{ShenYang_DCDSA2010}
J.~Shen, X.~Yang, Numerical approximations of {A}llen-{C}ahn and
  {C}ahn-{H}illiard equations, Dis. Cont. Dyn. Syst. A 28 (2010) 1669.

\bibitem{WiseWangLowengrub_SINA2009}
S.~Wise, C.~Wang, J.~Lowengrub, An energy stable and convergent finite
  difference scheme for the phase field crystal equation, SIAM J. Numer. Anal.
  47 (2009) 2269--2288.

\bibitem{HuWiseWangLowengrub_JCP2009}
Z.~Hu, S.~Wise, C.~Wang, J.~Lowengrub, Stable and efficient finite-difference
  nonlinear-multigrid scheme for the phase field crystal equation, J. Comp.
  Phy. 228 (2009) 5323--5339.

\bibitem{WangWise_SINA2011}
C.~Wang, S.~Wise, An energy stable and convergent finite-difference scheme for
  the modified phase field crystal equation, SIAM J. Num. Ana. 49 (2011)
  945--969.

\bibitem{ChenCondeWangWangWise_JSC2012}
W.~Chen, S.~Conde, C.~Wang, X.~Wang, S.~Wise, A linear energy stable scheme for
  a thin film model without slope selection, J. Sci. Comp. 52 (2012) 546--562.

\bibitem{Eyre_Proc1998}
D.~Eyre, Unconditionally gradient stable time marching the {C}ahn-{H}illard
  equation, in: {C}omputational and mathematical models of microstructural
  evolution ({S}an {F}rancisco, {CA}, 1998), Mater. Res. Soc. Sympos. Proc. 529
  (1998) 39.

\bibitem{XuTang_SINA2006}
C.~Xu, T.~Tang, Stability analysis of large time-stepping methods for epitaxial
  growth models, SIAM J. Numer. Anal. 44 (2006) 1759.

\bibitem{ChengYangShen_JCP2017}
W.~Cheng, X.~Yang, J.~Shen, Efficient and accurate numerical schemes for a
  hydro-dynamically coupled phase field diblock copolymer model, J. Comput.
  Phys. 341 (2017) 44.

\bibitem{Yang_JCP2016}
X.~Yang, Linear and unconditionally energy stable numerical schemes for the
  phase field model of homopolymer blends, J. Comput. Phys. 302 (2016) 509.

\bibitem{ShenXuYang_SIAMReview2019}
J.~Shen, J.~Xu, J.~Yang, A new class of efficient and robust energy stable
  schemes for gradient flows, SIAM Review 61 (2019) 474--506.

\bibitem{DuFeng_Handbook2020}
Q.~Du, X.~Feng, The phase field method for geometric moving interfaces and
  their numerical approximations, Geometric {P}artial {D}ifferential
  {E}quations, {H}andbook of {N}umerical {A}nalysis 21 (2019) 425--508.

\bibitem{ChenChoiHuRen_SIMA2018}
C.~Chen, Y.~Choi, Y.~Hu, X.~Ren, Higher dimensional bubble profiles in a sharp
  interface limit of the fitzhugh-nagumo system, SIAM J. Math. Ana. 50 (2018)
  5072--5095.

\bibitem{Du_Book2020}
Q.~Du, Nonlocal modeling, analysis and computation, CBMS-NSF Regional
  Conference Series in Applied Mathematics 94 (2020).

\bibitem{LiZhao_SIAM2013}
B.~Li, Y.~Zhao, Variational implicit solvation with solute molecular mechanics:
  {F}rom diffuse interface to sharp interface models, SIAM J. Applied Math. 73
  (2013) 1--23.

\bibitem{Ambrosio_Rend1995}
L.~Ambrosio, Minimizing movements, Rend. Accad. Naz. Sci. XL Mem. Mat. Appl. 5
  (1995) 191--246.

\bibitem{DeGiorgi_RMA1993}
E.~D. Giorgi, New problems on minimizing movements, {B}oundary value problems
  for partial differential equations and applications, RMA Res. Notes Appl.
  Math., Masson, Paris, 29 (1993) 81–98.

\bibitem{JooXuZhao_Preprint2020}
S.~Joo, X.~Xu, Y.~Zhao, Analysis and computation for
  {A}llen-{C}ahn-{O}hta-{N}akazawa model in ternary system, Preprint (2020).

\bibitem{Chen_Book2003}
Y.~Chen, Parabolic equations of second order, Peking University Press, China
  (2003).

\bibitem{WuYinWang_Book2006}
Z.~Wu, J.~Yin, C.~Wang, Elliptic and parabolic equations, World Scientific
  Publishing Co. Pte. Ltd., Hackensack, NJ (2006).

\end{thebibliography}

\end{document}